\newtheorem{theorem}{Theorem}[section]
\newtheorem{corollary}[theorem]{Corollary}
\newtheorem{definition}[theorem]{Definition}
\newtheorem{example}[theorem]{Example}
\newtheorem{lemma}[theorem]{Lemma}
\newtheorem{proposition}[theorem]{Proposition}
\theoremstyle{remark}
\newtheorem{remark}[theorem]{Remark}
\numberwithin{equation}{section}
\begin{document}
\title[Geometric structures associated with a contact metric $(\kappa,\mu)$-space]{Geometric structures associated with\\ a contact metric $(\kappa,\mu)$-space}

\author[B. Cappelletti Montano]{Beniamino Cappelletti Montano}
\address{Dipartimento di Matematica,
Universit\`{a} degli Studi di Bari ``A. Moro'', Via E. Orabona 4,
70125 Bari, Italy} \email{b.cappellettimontano@gmail.com}
\author[L. Di Terlizzi]{Luigia Di Terlizzi}
\address{Dipartimento di Matematica,
Universit\`{a} degli Studi di Bari ``A. Moro'', Via E. Orabona 4,
70125 Bari, Italy} \email{terlizzi@dm.uniba.it}

\subjclass[2000]{53C12, 53C15, 53C25, 53C26, 57R30}

\keywords{Contact metric manifold, $(\kappa ,\mu )$-nullity
condition, Sasakian, para-contact, para-Sasakian, bi-Legendrian, }

\begin{abstract}
We prove that any contact metric $(\kappa,\mu)$-space
$(M,\xi,\varphi,\eta,g)$ admits a canonical paracontact metric
structure which is compatible with the contact form $\eta$. We study
such canonical paracontact structure,  proving that it verifies a
nullity condition and induces on the underlying contact manifold
$(M,\eta)$ a sequence of compatible contact and paracontact metric
structures verifying nullity conditions. The behavior of that
sequence, related to the Boeckx invariant $I_M$ and to the
bi-Legendrian structure of $(M,\xi,\varphi,\eta,g)$, is then
studied. Finally we are able to define a canonical Sasakian
structure on any contact metric $(\kappa,\mu)$-space whose Boexkx
invariant satisfies $|I_M|>1$.
\end{abstract}

\maketitle

\section{Introduction}
A contact metric $(\kappa,\mu)$-space is a contact metric manifold
$(M,\varphi,\xi,\eta,g)$ such that the Reeb vector field belongs to
the so-called ``$(\kappa,\mu)$-nullity distribution'', i.e.
satisfies the following condition
\begin{equation}\label{definizione}
R_{X
Y}\xi=\kappa\left(\eta\left(Y\right)X-\eta\left(X\right)Y\right)+\mu\left(\eta\left(Y\right)hX-\eta\left(X\right)hY\right),
\end{equation}
for some real numbers $\kappa$, $\mu$ and for  any
$X,Y\in\Gamma(TM)$; here $R$ denotes the curvature tensor field of
the Levi Civita connection and $2h$ the Lie derivative of the
structure tensor $\varphi$ in the direction of the Reeb vector field
$\xi$. This definition was introduced by Blair, Kouforgiorgos and
Papantoniou in \cite{BKP-95}, as a generalization both of the
Sasakian condition $R_{X
Y}\xi=\eta\left(Y\right)X-\eta\left(X\right)Y$ and of those contact
metric manifolds verifying $R_{X Y}\xi=0$ which were studied by
Blair in \cite{blair-1}.

Recently contact metric $\left(\kappa,\mu\right)$-spaces have
attracted the attention of many authors and various papers have
appeared on this topic (e.g. \cite{Boeckx-08},
\cite{Mino-Luigia-Tripathi-08}, \cite{koufogiorgos}). In fact there
are many motivations for studying
$\left(\kappa,\mu\right)$-manifolds: the first is that, in the
non-Sasakian case (that is for $\kappa\neq 1$), the condition
\eqref{definizione} determines the curvature completely; moreover,
while the values of $\kappa$ and $\mu$ may change, the form of
\eqref{definizione} is invariant under $\mathcal D$-homothetic
deformations; finally, there are non-trivial examples of such
manifolds, the most important being the unit tangent bundle of a
Riemannian manifold of constant sectional curvature endowed with its
standard contact metric structure.

In (\cite{Boeckx-00}) Boeckx provided a complete (local)
classification of non-Sasakian contact metric $(\kappa,\mu)$-spaces
based on the invariant
\begin{equation*}
I_{M}=\frac{1-\frac{\mu}{2}}{\sqrt{1-\kappa}}.
\end{equation*}
Later on, in the recent paper \cite{Mino-sub}, a geometric
interpretation of such invariant in terms of Legendre foliations has
been given.

In this paper we study mainly those (non-Sasakian) contact metric
$(\kappa,\mu)$-spaces such that $I_{M}\neq\pm 1$, showing how rich
the geometry of this wide class of contact metric
$(\kappa,\mu)$-spaces is. In fact we prove that any such contact
metric $(\kappa,\mu)$-manifold is endowed with a non-flat pair of
bi-Legendrian structures, a  $3$-web structure and a canonical
family of contact and paracontact metric structures satisfying
nullity conditions. Such geometric structures are related to each
other and depend on the sign of the Boeckx invariant $I_M$.

The main part of the article is devoted to the study of the
interplays between the theory of contact metric
$(\kappa,\mu)$-spaces and paracontact geometry. The link is just
given by the theory of bi-Legendrian structures. Indeed, as it is
proven in the recent article \cite{Mino-08}, there is a biunivocal
correspondence between the set of (almost) bi-Legendrian structures
and the set  of  paracontact metric structures on the same contact
manifold $(M,\eta)$. Such bijection maps bi-Legendrian structures
onto integrable paracontact metric structures and flat bi-Legendrian
structures onto para-Sasakian structures. Thus, since any contact
metric $(\kappa,\mu)$-manifold $(M,\varphi,\xi,\eta,g)$ is
canonically endowed with the bi-Legendrian structure given by the
eigendistributions corresponding to the non-zero eigenvalues of the
operator $h$, one can associate to $(M,\varphi,\xi,\eta,g)$ a
 paracontact metric structure $(\tilde\varphi,\xi,\eta,\tilde g)$,  which we prove to be given by
\begin{equation}\label{canonical}
\tilde\varphi:=\frac{1}{2\sqrt{1-\kappa}}{\mathcal L}_{\xi}\varphi,
\ \ \tilde g:=d\eta(\cdot,\tilde\varphi\cdot)+\eta\otimes\eta,
\end{equation}
and which we call the \emph{canonical paracontact metric structure}
of the contact metric $(\kappa,\mu)$-space $(M,\varphi,\xi,\eta,g)$.
We study this paracontact structure and we prove that its curvature
tensor field satisfies the relation
\begin{equation*}
\tilde R_{X
Y}\xi=\tilde\kappa\left(\eta(Y)X-\eta(X)Y\right)+\tilde\mu(\eta(Y)\tilde
h X-\eta(X)\tilde h Y),
\end{equation*}
with $\tilde\kappa=\left(1-\frac{\mu}{2}\right)^2+\kappa-2$ and
$\tilde\mu=2$ and where $\tilde h:=\frac{1}{2}{\mathcal
L}_{\xi}\tilde\varphi$. The next step is the study of the structure
defined by the Lie derivative of $\tilde\varphi$ in the direction of
the Reeb vector field. In fact we prove that, if $|I_M|<1$ the
structure $(\varphi_1,\xi,\eta,g_1)$, given by
\begin{equation*}
\varphi_1:=\frac{1}{2\sqrt{-1-\tilde\kappa}}{\mathcal
L}_{\xi}\tilde\varphi, \ \
g_1:=-d\eta(\cdot,\varphi_1\cdot)+\eta\otimes\eta,
\end{equation*}
is a \emph{contact} metric $(\kappa_1,\mu_1)$-structure on
$(M,\eta)$, where $\kappa_1=\kappa+\left(1-\frac{\mu}{2}\right)^2$
and $\mu_1=2$. Whereas, in the case $|I_M|>1$, the structure
$(\tilde\varphi_1,\xi,\eta,\tilde g_1)$, defined by
\begin{equation*}
\tilde\varphi_1:=\frac{1}{2\sqrt{1+\tilde\kappa}}{\mathcal
L}_{\xi}\tilde\varphi, \ \ \tilde
g_1:=d\eta(\cdot,\tilde\varphi_1\cdot)+\eta\otimes\eta,
\end{equation*}
is a \emph{paracontact} metric
$(\tilde\kappa_1,\tilde\mu_1)$-structures, with
$\tilde\kappa_1=\left(1-\frac{\mu}{2}\right)^2+\kappa-2$ and
$\tilde\mu_1=2$. Furthermore, we prove that it is just the canonical
paracontact structure induced by a suitable contact metric
$(\kappa',\mu')$-structure on $M$. Then we show that this procedure
can be iterated and gives rise to a sequence of contact and
paracontact structures associated with the initial contact metric
$(\kappa,\mu)$-structure $(\varphi,\xi,\eta,g)$. The behavior of
such canonical sequence essentially depends  on the Boeckx invariant
$I_M$ of the contact metric $(\kappa,\mu)$-manifold
$(M,\varphi,\xi,\eta,g)$. If $|I_M|>1$,  the sequence consists only
of paracontact structures, whereas in the case $|I_M|<1$ we have an
alternation of contact and paracontact structures (see Theorem
\ref{successione} for all details). Moreover, all the new contact
metric structures on $M$ obtained in this way are in fact
Tanaka-Webster parallel structures (\cite{Boeckx-08}), i.e. the
Tanaka-Webster connection parallelizes both the Tanaka-Webster
torsion and the Tanaka-Webster curvature.

Thus  we have that in a contact metric $(\kappa,\mu)$-space
$(M,\varphi,\xi,\eta,g)$, the $k$-th Lie derivative ${\mathcal
L}_{\xi}\cdots{\mathcal L}_{\xi}\varphi$ of the structure tensor
$\varphi$ in the direction $\xi$, once suitably normalized, defines
a new contact or paracontact structure, depending on the value of
$I_M$.  This last properties shows  another surprising geometric
feature of the invariant $I_M$, linked to the paracontact geometry
of the contact metric $(\kappa,\mu)$-manifold $M$.

Finally we prove that every contact metric $(\kappa,\mu)$-space such
that $|I_M|>1$ admits a canonical compatible Sasakian structure,
explicitly given by
\begin{equation*}
\bar\varphi_{-}:=-\frac{1}{\sqrt{\left(1-\frac{\mu}{2}\right)^2-\left(1-\kappa\right)}}\left(\left(1-\frac{\mu}{2}\right)\varphi+\varphi
h\right), \ \ \bar
g_{-}:=d\eta(\cdot,\bar\varphi_{-}\cdot)+\eta\otimes\eta
\end{equation*}
in the case $I_M<-1$ and
\begin{equation*}
\bar\varphi_{+}:=\frac{1}{\sqrt{\left(1-\frac{\mu}{2}\right)^2-\left(1-\kappa\right)}}\left(\left(1-\frac{\mu}{2}\right)\varphi+\varphi
h\right), \ \ \bar
g_{+}:=-d\eta(\cdot,\bar\varphi_{+}\cdot)+\eta\otimes\eta
\end{equation*}
in the case $I_M>1$. Such Sasakian structures are related to the
above paracontact structures by the formulas
$\bar\varphi_{-}=\tilde\varphi\circ\tilde\varphi_1$ and
$\bar\varphi_{+}=\tilde\varphi_1\circ\tilde\varphi$. In particular,
$(\bar\varphi_{-},\tilde\varphi,\tilde\varphi_1)$ or
$(\bar\varphi_{+},\tilde\varphi_1,\tilde\varphi)$, according to
$I_M<-1$ or $I_M>1$, respectively,  induce an almost
anti-hypercomplex structure, and hence a $3$-web, on the contact
distribution of $(M,\eta)$.

Therefore it appears that a further geometrical interpretation of
the Boeckx invariant is the fact that any contact metric
$(\kappa,\mu)$-space such that $|I_M|<1$ can admit compatible
Tanaka-Webster parallel structures, whereas any contact metric
$(\kappa,\mu)$-space such that $|I_M|>1$ can admit compatible
Sasakian structures.

\bigskip

All manifolds considered here are assumed to be smooth i.e. of the
class  $\mathcal C^\infty$, and connected; we denote by
$\Gamma(\,\cdot\,)$ the set of all sections of a corresponding
bundle. We use the convention that $2u\wedge v=u\otimes v-v\otimes
u$.

\section{Preliminaries}

\subsection{Contact and paracontact structures}

A \emph{contact manifold} is a $(2n+1)$-dimensional smooth manifold
$M$ which carries a $1$-form $\eta$, called \emph{contact form},
satisfying $\eta\wedge\left(d\eta\right)^n\neq 0$ everywhere on $M$.
It is well known that given $\eta$ there exists a unique vector
field $\xi$, called \emph{Reeb vector field}, such that
$i_{\xi}\eta=1$ and $i_{\xi}d\eta=0$. In the sequel we will denote
by $\mathcal D$ the $2n$-dimensional distribution defined by
$\ker\left(\eta\right)$, called the \emph{contact distribution}. It
is easy to see that the Reeb vector field is an infinitesimal
automorphism with respect to the contact distribution and  the
tangent bundle of $M$ splits as the direct sum $TM=\mathcal
D\oplus\mathbb{R}\xi$.

Given a contact manifold $(M,\eta)$ one can consider two different
geometric structures associated with the contact form $\eta$, namely
a contact metric structure and a paracontact metric structure.

\medskip

In fact it is well known that $(M,\eta)$ admits a Riemannian metric
$g$ and a $(1,1)$-tensor field $\varphi $ such that
\begin{equation}\label{acca}
\varphi ^{2}=-I+\eta \otimes \xi, \ \ d\eta \left(
X,Y\right)=g\left( X,\varphi Y\right), \ \ g(\varphi X,\varphi
Y)=g(X,Y)-\eta (X)\eta (Y)
\end{equation}
for all $X,Y\in \Gamma \left( TM\right)$, from which it follows that
$\varphi\xi=0$, $\eta\circ\varphi=0$ and $\eta=g(\cdot,\xi)$. The
structure $\left(\varphi, \xi, \eta, g\right)$ is called a
\emph{contact metric structure} and the manifold $M$ endowed with
such a structure is said to be a {\em contact metric manifold}.  In
a contact metric manifold $M$, the $\left( 1,1\right) $-tensor field
$h:=\frac 12 \mathcal L_\xi \varphi$ is symmetric and satisfies
\begin{equation}
h\xi =0,\;\; \eta\circ h=0,\;\; h\varphi +\varphi h=0,\;\;\nabla\xi
=-\varphi -\varphi h,\;\;  {\rm tr}(h)={\rm tr}(\varphi h)=0,
\label{eq-contact-7}
\end{equation}
where $\nabla$ is the Levi Civita connection of $(M,g)$. The tensor
field $h$ vanishes identically if and only if the Reeb vector field
is Killing, and in this case the contact metric manifold is said to
be \emph{K-contact}.

In any (almost) contact (metric) manifold one can consider the
tensor field $N_{\varphi}$ defined by
\begin{equation}\label{nijenhuis}
N_{\varphi}(X,Y):=\varphi^2[X,Y]+[\varphi X,\varphi
Y]-\varphi[\varphi X,Y]-\varphi[X,\varphi Y]+2d\eta(X,Y)\xi.
\end{equation}
The tensor field $N_\varphi$ satisfies the following formula, which
will turn out very useful in the sequel,
\begin{equation}\label{formulenijenhuis1}
\varphi N_\varphi(X,Y)+N_\varphi(\varphi X,Y)=2\eta(X)h Y,
\end{equation}
for all $X,Y\in\Gamma(TM)$, from which, in particular, it follows
that
\begin{equation}\label{formulenijenhuis2}
\eta(N_{\varphi}(\varphi X,Y))=0.
\end{equation}
Any contact metric manifold such that $N_{\varphi}$ vanishes
identically is said to be \emph{Sasakian}. In terms of the curvature
tensor field, the Sasakian condition is expressed by the following
relation
\begin{equation}\label{condizionesasaki}
R_{XY}\xi=\eta(Y)X-\eta(X)Y.
\end{equation}
Any Sasakian manifold is \emph{K}-contact and in dimension $3$  the
converse also holds (see \cite{Blair-02} for more details). A
natural generalization of the Sasakian condition
\eqref{condizionesasaki} leads to the notion of ``contact metric
$(\kappa,\mu)$-manifold'' (\cite{BKP-95}). Let
$(M,\varphi,\xi,\eta,g)$ be a contact metric manifold. If  the
curvature tensor field of the Levi Civita connection satisfies
\begin{equation}\label{eq-km}
R_{X
Y}\xi=\kappa\left(\eta\left(Y\right)X-\eta\left(X\right)Y\right)+\mu\left(\eta\left(Y\right)hX-\eta\left(X\right)hY\right),
\end{equation}
for some $\kappa,\mu\in\mathbb{R}$, we say that
$(M,\varphi,\xi,\eta,g)$  is a \emph{contact metric
$(\kappa,\mu)$-manifold} (or that $\xi$ belongs to the
$(\kappa,\mu)$-nullity distribution). This definition was introduced
and deeply studied by Blair, Koufogiorgos and Papantoniou in
\cite{BKP-95}. Among other things, the authors proved the following
results.

\begin{theorem}[\cite{BKP-95}]\label{teoremagreci}
Let $\left(M,\varphi,\xi,\eta,g\right)$ be a contact metric
$(\kappa,\mu)$-manifold. Then necessarily $\kappa\leq 1$. Moreover,
if $\kappa=1$ then $h=0$ and $\left(M,\varphi,\xi,\eta,g\right)$ is
 Sasakian; if $\kappa<1$, the contact metric structure is not
Sasakian and $M$ admits three mutually orthogonal integrable
distributions ${\mathcal D}(0)=\mathbb{R}\xi$, ${\mathcal
D}(\lambda)$ and ${\mathcal D}(-\lambda)$ corresponding to the
eigenspaces of $h$, where $\lambda=\sqrt{1-\kappa}$.
\end{theorem}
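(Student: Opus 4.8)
The plan is to reduce all three assertions to the single operator identity $h^2 = (\kappa-1)\varphi^2$, which I would extract from \eqref{eq-km} together with one universal contact-metric identity, and then read off the statement from it.

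First I would specialize \eqref{eq-km} to $Y=\xi$. Writing $\ell X := R_{X\xi}\xi$ for the Jacobi operator in the $\xi$-direction and using $\eta(\xi)=1$ and $h\xi=0$, this gives $\ell X = \kappa X + \mu hX$ for every $X\in\Gamma(\mathcal D)$. On the other hand, on any contact metric manifold one has the universal identity $\tfrac12(\ell X - \varphi\ell\varphi X) = -\varphi^2 X - h^2 X$ (see \cite{Blair-02}). Substituting the expression for $\ell$ and using $\varphi^2=-I$ on $\mathcal D$ together with $h\varphi=-\varphi h$ from \eqref{eq-contact-7}, the $\mu$-terms cancel and I obtain $h^2 X = (1-\kappa)X$ for all $X\in\Gamma(\mathcal D)$, that is $h^2 = (\kappa-1)\varphi^2$ on $TM$.

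Since $h$ is symmetric by \eqref{eq-contact-7}, its eigenvalues are real and $h^2$ is positive semidefinite; as $h^2$ restricts to $(1-\kappa)I$ on the nonzero distribution $\mathcal D$, the number $1-\kappa$ must be nonnegative, proving $\kappa\le1$. If $\kappa=1$ then $h^2=0$ on $\mathcal D$, and $g(h^2 X,X)=|hX|^2$ forces $h=0$; the condition \eqref{eq-km} then collapses to $R_{XY}\xi=\eta(Y)X-\eta(X)Y$, which is exactly the Sasakian curvature characterization \eqref{condizionesasaki}, so $M$ is Sasakian. If $\kappa<1$, set $\lambda=\sqrt{1-\kappa}>0$; then $h|_{\mathcal D}$ is a nonzero symmetric operator with $h^2=\lambda^2 I$, so its only eigenvalues are $\pm\lambda$, yielding the orthogonal decomposition $\mathcal D=\mathcal D(\lambda)\oplus\mathcal D(-\lambda)$ (orthogonality because $h$ is symmetric and $\lambda\ne-\lambda$), with $\mathcal D(0)=\mathbb R\xi$ the kernel of $h$; since $h\ne0$ the manifold is not K-contact, hence not Sasakian.

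The remaining and most delicate point is the integrability of $\mathcal D(\lambda)$ and $\mathcal D(-\lambda)$. For $X,Y\in\Gamma(\mathcal D(\lambda))$ I would first note that $\varphi$ maps $\mathcal D(\lambda)$ into $\mathcal D(-\lambda)$ (because $h\varphi=-\varphi h$), so $\eta([X,Y])$ is proportional to $d\eta(X,Y)=g(X,\varphi Y)=0$ by \eqref{acca}, placing $[X,Y]$ in $\mathcal D$. To control its $\mathcal D(-\lambda)$-component, differentiating $hY=\lambda Y$ gives, for any $Z\in\Gamma(\mathcal D(-\lambda))$, the relation $2\lambda\,g(\nabla_X Y,Z)=g((\nabla_X h)Y,Z)$, and antisymmetrizing yields $2\lambda\,g([X,Y],Z)=g((\nabla_X h)Y-(\nabla_Y h)X,Z)$. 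Thus integrability follows once I show that $(\nabla_X h)Y$ is proportional to $\xi$ whenever $X,Y$ are horizontal, for then the right-hand side vanishes. This is where the real work lies: I expect to have to derive the explicit formula for $\nabla h$ in a $(\kappa,\mu)$-manifold, as in \cite{BKP-95}, obtained by combining the general contact-metric expression for $\nabla_\xi h$, the curvature condition \eqref{eq-km}, and the second Bianchi identity; its horizontal part is precisely the $\xi$-valued term $\bigl((1-\kappa)g(X,\varphi Y)+g(X,h\varphi Y)\bigr)\xi$, whose inner product with $Z\in\mathcal D(-\lambda)$ is zero. The same argument applies verbatim to $\mathcal D(-\lambda)$. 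The main obstacle is therefore establishing this $\nabla h$ formula; everything else is a short consequence of $h^2=(\kappa-1)\varphi^2$.
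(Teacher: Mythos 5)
Your proposal is correct, but note first that the paper itself contains no proof of Theorem \ref{teoremagreci}: the theorem is quoted from \cite{BKP-95}, so the only meaningful comparison is with that source, and your argument is essentially the original one --- extract $h^2=(\kappa-1)\varphi^2$ from \eqref{eq-km} together with the identity $\tfrac12(\ell-\varphi\ell\varphi)=-\varphi^2-h^2$ of \cite{Blair-02}, read off $\kappa\le 1$ and the dichotomy from the spectral theory of the symmetric operator $h$, and reduce the integrability of ${\mathcal D}(\pm\lambda)$ to the covariant derivative of $h$. All the computations you actually carry out are right: the $\mu$-terms do cancel in the Jacobi-operator identity; $\kappa=1$ forces $h=0$ and collapses \eqref{eq-km} to \eqref{condizionesasaki}; and the identity $2\lambda\, g([X,Y],Z)=g((\nabla_X h)Y-(\nabla_Y h)X,Z)$ for $X,Y\in\Gamma({\mathcal D}(\lambda))$, $Z\in\Gamma({\mathcal D}(-\lambda))$ is exactly the right way to localize the integrability question in $\nabla h$.

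One warning about the single ingredient you defer, the formula for $\nabla h$. As printed in this paper, \eqref{Blair2} ends with the term $-\mu\varphi hY$, and with that version your argument would collapse: for horizontal $X,Y$ your bracket identity would give $2\lambda\,g([X,Y],Z)=\mu\lambda\,g(\varphi X-\varphi Y,Z)$, which is generically nonzero, so you would ``disprove'' integrability whenever $\mu\neq 0$. The correct formula (Lemma 3.3 of \cite{BKP-95}, and in effect the one you wrote, since you claim the horizontal part is purely along $\xi$) carries the factor $\eta(X)$ in the last term, namely $-\mu\,\eta(X)\varphi hY$; with it, $(\nabla_X h)Y$ is proportional to $\xi$ whenever $\eta(X)=\eta(Y)=0$ and the argument closes. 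The missing $\eta(X)$ in \eqref{Blair2} is a typo of the present paper, detectable internally: \eqref{Blair2} as printed is inconsistent with \eqref{Blair3}, as one sees by comparing them through $(\nabla_X\varphi h)Y=(\nabla_X\varphi)hY+\varphi((\nabla_X h)Y)$ together with \eqref{Blair1} and $h^2=(\kappa-1)\varphi^2$; it is \eqref{Blair3}, whose $\mu$-term does carry $\eta(X)$, that is correct. So your proof is sound provided you derive --- or quote from \cite{BKP-95} rather than from the formula printed here --- the correct expression for $\nabla h$; that derivation is indeed the only substantive work remaining in your outline.
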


\begin{theorem}[\cite{BKP-95}]
Let $(M,\varphi,\xi,\eta,g)$ be a contact metric
$(\kappa,\mu)$-manifold. Then the following relations hold, for any
$X,Y\in\Gamma\left(TM\right)$,
\begin{gather}
(\nabla_X \varphi ) Y = g(X,Y+hY)\xi - \eta (Y)(X+hX),\label{Blair1}\\
(\nabla_X h ) Y = ((1-\kappa)g(X, \phi Y)+g(X, \phi hY))\xi +
 \eta(Y)h(\phi X + \phi h X) - \mu \phi h Y,\label{Blair2}\\
(\nabla_X\varphi
h)Y=\bigl(g(X,hY)-(1-\kappa)g(X,\varphi^2Y)\bigr)\xi +
\eta(Y)\bigl(hX -(1-\kappa)\varphi^2 X \bigr) + \mu \eta (X) hY.
\label{Blair3}
\end{gather}
\end{theorem}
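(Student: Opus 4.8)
The plan is to prove the three identities in the stated order, obtaining the last as a formal consequence of the first two. Since $\nabla$ is a derivation of the tensor algebra, $(\nabla_X(\varphi h))Y = (\nabla_X\varphi)(hY) + \varphi((\nabla_X h)Y)$, so once \eqref{Blair1} and \eqref{Blair2} are in hand one reaches \eqref{Blair3} by substituting $hY$ for $Y$ in \eqref{Blair1}, applying $\varphi$ to \eqref{Blair2}, and simplifying with $\varphi\xi = 0$, $\eta\circ h = 0$, $h\varphi = -\varphi h$ and $\varphi^2 = -I + \eta\otimes\xi$; the only extra algebraic input needed is the identity $h^2 = (\kappa-1)\varphi^2$. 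I would establish this first. Setting $Y = \xi$ in \eqref{eq-km} and using $h\xi = 0$, $\eta(\xi)=1$ gives the Jacobi operator $\ell X := R_{X\xi}\xi = -\kappa\varphi^2 X + \mu h X$. Feeding this into the universal contact-metric operator identity $\varphi\ell\varphi - \ell = 2(h^2 + \varphi^2)$ and reducing via $\varphi^3 = -\varphi$, $h\varphi = -\varphi h$ and $\eta\circ h = 0$ collapses the left-hand side to $2\kappa\varphi^2$, whence $h^2 = (\kappa-1)\varphi^2$. In particular the eigenvalues of $h$ on $\mathcal D$ are $\pm\sqrt{1-\kappa}$, recovering the decomposition of Theorem \ref{teoremagreci}.

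For \eqref{Blair1} I would start from the general expression for the covariant derivative of $\varphi$ on a contact metric manifold. Since here the fundamental form equals $d\eta$ (so its exterior derivative vanishes) and the auxiliary torsion $N^{(2)}$ is zero, this reduces to $2g((\nabla_X\varphi)Y, Z) = g(N_\varphi(Y,Z), \varphi X) + 2d\eta(\varphi Y, X)\eta(Z) - 2d\eta(\varphi Z, X)\eta(Y)$, with $N_\varphi$ the tensor \eqref{nijenhuis}. Using $d\eta(\varphi Y, X) = g(X,Y) - \eta(X)\eta(Y)$, a short computation shows that \eqref{Blair1} is equivalent to the single requirement that the $\mathcal D$-component of $N_\varphi(Y,Z)$ equal $2\eta(Z)\varphi hY - 2\eta(Y)\varphi hZ$. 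I would verify this by evaluating $N_\varphi$ on eigenvectors of $h$: the integrability of $\mathcal D(\lambda)$, $\mathcal D(-\lambda)$ and $\mathbb R\xi$ from Theorem \ref{teoremagreci} annihilates $N_\varphi$ on each individual eigendistribution, while the mixed and $\xi$-directed components are controlled by the identities \eqref{formulenijenhuis1} and \eqref{formulenijenhuis2}; together with $h\varphi = -\varphi h$ and $h^2 = (\kappa-1)\varphi^2$ this pins $N_\varphi$ down to exactly the required form.

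The crux is \eqref{Blair2}, which, unlike \eqref{Blair1}, genuinely requires the full curvature tensor and not merely the Jacobi operator $\ell$. Here I would differentiate the defining relation \eqref{eq-km} covariantly: since $\kappa$ and $\mu$ are constant, $\nabla_W(R_{XY}\xi)$ expands into a curvature term $R_{XY}\nabla_W\xi$ (evaluated through $\nabla_W\xi = -\varphi W - \varphi hW$) together with terms in $\nabla\varphi$, $\nabla h$ and $\nabla\eta$. Substituting into the second Bianchi identity $\sum_{\mathrm{cyc}}(\nabla_W R)(X,Y)\xi = 0$ and replacing $\nabla\varphi$ by the already-proven \eqref{Blair1} yields a system that can be solved for $(\nabla_X h)Y$. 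As an organizing tool I would use the identity $R_{XY}\xi = (\nabla_Y T)X - (\nabla_X T)Y$, where $T := \varphi + \varphi h = -\nabla_{(\cdot)}\xi$, which fixes the skew part of $\nabla(\varphi h)$ directly; the $\xi$-direction is determined by the general identity $\nabla_\xi h = \varphi - \varphi h^2 - \varphi\ell$, giving $\nabla_\xi h = -\mu\varphi h$, and the symmetry of the operator $\nabla_X h$ (a consequence of $\nabla g = 0$ and the symmetry of $h$) supplies the remaining constraints needed to reconstruct the full tensor. I expect the main difficulty to lie precisely in this reconstruction: keeping simultaneous track of the $\mathcal D(\lambda)$-, $\mathcal D(-\lambda)$- and $\mathbb R\xi$-components while the second Bianchi identity is unwound is where the bookkeeping is heaviest. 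Once \eqref{Blair2} is secured, \eqref{Blair3} follows formally by the Leibniz computation described above.
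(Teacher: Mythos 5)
Two preliminary observations. First, the paper you were given never proves this theorem: it is quoted (with typos) from \cite{BKP-95}, so your attempt can only be measured against the original argument of Blair, Koufogiorgos and Papantoniou. Second, several of your ingredients are correct and do coincide with theirs: the derivation of $h^2=(\kappa-1)\varphi^2$ from $\ell:=R_{\cdot\,\xi}\xi=-\kappa\varphi^2+\mu h$ together with the universal identity $\varphi\ell\varphi-\ell=2(h^2+\varphi^2)$ is exactly their first lemma; the reduction of \eqref{Blair1}, via the general formula for $2g((\nabla_X\varphi)Y,Z)$, to the statement that the $\mathcal D$-component of $N_\varphi(Y,Z)$ equals $2\eta(Z)\varphi hY-2\eta(Y)\varphi hZ$ is correct; and the Leibniz derivation of \eqref{Blair3} from the first two formulas is sound. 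In fact that derivation detects errors in the statement as printed: the last term of \eqref{Blair2} must be $-\mu\,\eta(X)\varphi hY$ (applying $\varphi$ to the printed $-\mu\varphi hY$ would give $+\mu hY$, not $+\mu\eta(X)hY$, in \eqref{Blair3}), and its $\xi$-coefficient should contain $g(X,h\varphi Y)$ rather than $g(X,\varphi hY)$, since otherwise $\nabla_Xh$ fails to be a $g$-symmetric operator.

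The genuine gap is in your proof of \eqref{Blair1}, and it is not cosmetic. For $Y,Z\in\Gamma(\mathcal D(\lambda))$, using $d\eta(Y,Z)=0$, $[Y,Z]\in\Gamma(\mathcal D(\lambda))$ and $[\varphi Y,\varphi Z]\in\Gamma(\mathcal D(-\lambda))$, one has
\begin{equation*}
N_\varphi(Y,Z)=-[Y,Z]+[\varphi Y,\varphi Z]-\varphi\bigl([\varphi Y,Z]+[Y,\varphi Z]\bigr),
\end{equation*}
and involutivity of the eigendistributions says nothing about the mixed brackets $[\varphi Y,Z]$ and $[Y,\varphi Z]$ (sections of $\mathcal D(-\lambda)$ bracketed with sections of $\mathcal D(\lambda)$), nor does it cancel $-[Y,Z]+[\varphi Y,\varphi Z]$; so ``the integrability of the eigendistributions annihilates $N_\varphi$ on each individual eigendistribution'' is a non sequitur. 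The identities \eqref{formulenijenhuis1}--\eqref{formulenijenhuis2} cannot close this hole: they hold on \emph{every} contact metric manifold, where \eqref{Blair1} generally fails, and on $\mathcal D\times\mathcal D$ they only produce symmetries such as $N_\varphi(\varphi Y,Z)=-\varphi N_\varphi(Y,Z)$, which transport values of $N_\varphi$ between eigendistributions but, for $n\geq 2$, leave algebraic room for a nonzero tensor; the required vanishing must come from the curvature hypothesis \eqref{eq-km} itself, which your argument for \eqref{Blair1} never invokes beyond its corollaries. There is also a circularity problem: in \cite{BKP-95} the involutivity of $\mathcal D(\pm\lambda)$ (Theorem \ref{teoremagreci}) is itself deduced from \eqref{Blair2} (one shows $h[X,Y]-\lambda[X,Y]=(\nabla_Yh)X-(\nabla_Xh)Y$ is both tangent to $\mathcal D$ and proportional to $\xi$), whereas you use that involutivity to obtain \eqref{Blair1}, which you then feed into your proof of \eqref{Blair2}. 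Finally, within \eqref{Blair2} your reconstruction claim is too optimistic: the skew part of $\nabla(\varphi h)$ (which is all that $R_{XY}\xi=(\nabla_YT)X-(\nabla_XT)Y$, $T=\varphi+\varphi h$, provides), the $g$-symmetry of each operator $\nabla_Xh$, and the value of $\nabla_\xi h$ leave the totally symmetric part of $g((\nabla_X\varphi h)Y,Z)$ entirely undetermined, so the second Bianchi identity has to carry the full remaining weight, and your sketch does not show how that system closes. As it stands, the proposal establishes \eqref{Blair3} conditionally on \eqref{Blair1}--\eqref{Blair2}, but proves neither of those two.
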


Given a non-Sasakian contact metric $(\kappa,\mu)$-manifold $M$,
Boeckx \cite{Boeckx-00} proved that the number
$I_{M}:=\frac{1-\frac{\mu}{2}}{\sqrt{1-\kappa}}$, is an invariant of
the contact metric $(\kappa,\mu)$-structure, and he proved that two
non-Sasakian contact metric $(\kappa,\mu)$-manifolds
$(M_1,\varphi_1,\xi_1,\eta_1,g_1)$ and
$(M_2,\varphi_2,\xi_2,\eta_2,g_2)$ are locally isometric as contact
metric manifolds if and only if $I_{M_1}=I_{M_2}$. Then the
invariant $I_M$ has been used by Boeckx for providing a full
classification of contact metric $(\kappa,\mu)$-spaces. \  The
standard example of contact metric $(\kappa,\mu)$-manifold is given
by the tangent sphere bundle $T_{1}N$ of a Riemannian manifold of
constant curvature $c$ endowed with its standard contact metric
structure. In this case $\kappa=c(2-c)$, $\mu=-2c$ and
$I_{T_{1}N}=\frac{1+c}{|1-c|}$. Therefore as $c$ varies over the
reals, $I_{T_{1}N}$ takes on every value strictly greater than $-1$.
Moreover one can easily find that $I_{T_{1}N}<1$ if and only if
$c<0$.

\bigskip

On the other hand on a contact manifold $(M,\eta)$ one can consider
also compatible paracontact metric structures. We recall (cf.
\cite{kaneyuki1}) that an \emph{almost paracontact structure} on a
$(2n+1)$-dimensional smooth manifold $M$ is given by a
$(1,1)$-tensor field $\tilde\varphi$, a vector field $\xi$ and a
$1$-form $\eta$ satisfying the following conditions
\begin{enumerate}
  \item[(i)] $\eta(\xi)=1$, \ $\tilde\varphi^2=I-\eta\otimes\xi$,
  \item[(ii)] denoted by $\mathcal D$ the $2n$-dimensional distribution generated by
  $\eta$, the tensor field $\tilde\varphi$ induces an almost paracomplex
  structure on each fibre on $\mathcal D$.
\end{enumerate}
Recall that an almost paracomplex structure on a $2n$-dimensional
smooth manifold is a tensor field $J$ of type $(1,1)$ such that
$J\neq I$, $J^2=I$ and the eigendistributions $T^+, T^-$
corresponding to the eigenvalues $1, -1$ of $J$, respectively, have
 dimension $n$.

As an immediate consequence of the definition one has that
$\tilde\varphi\xi=0$, $\eta\circ\tilde\varphi=0$ and the field of
endomorphisms $\tilde\varphi$ has constant rank $2n$. Any almost
paracontact manifold admits a semi-Riemannian metric $\tilde g$ such
that
\begin{equation}\label{compatibile}
\tilde g(\tilde\varphi X,\tilde\varphi Y)=-\tilde
g(X,Y)+\eta(X)\eta(Y)
\end{equation}
for all  $X,Y\in\Gamma(TM)$. Then $(M,\tilde\varphi,\xi,\eta,\tilde
g)$ is called an \emph{almost paracontact metric manifold}. Notice
that any such semi-Riemannian metric is necessarily of signature
$(n+1,n)$. If in addition $d\eta(X,Y)=\tilde g(X,\tilde\varphi Y)$
for all $X,Y\in\Gamma(TM)$, $(M,\tilde\varphi,\xi,\eta,\tilde g)$ is
said to be a \emph{paracontact metric manifold}. On an almost
paracontact manifold one defines the tensor field
\begin{equation*}
N_{\tilde\varphi}(X,Y):=\tilde\varphi^2[X,Y]+[\tilde\varphi
X,\tilde\varphi Y]-\tilde\varphi[\tilde\varphi
X,Y]-\tilde\varphi[X,\tilde\varphi Y]-2d\eta(X,Y)\xi.
\end{equation*}
If $N_{\tilde\varphi}$ vanishes identically the almost paracontact
manifold in question is said to be \emph{normal}.

Moreover, in a paracontact metric manifold one defines a symmetric,
trace-free operator $\tilde h$ by setting $\tilde
h=\frac{1}{2}{\mathcal L}_{\xi}\tilde\varphi$. One can prove (see
\cite{zamkovoy}) that $\tilde h$ is a symmetric operator which
anti-commutes with $\tilde\varphi$ and satisfies $\tilde h\xi=0$,
$\eta\circ\tilde h=0$ and
$\tilde\nabla\xi=-\tilde\varphi+\tilde\varphi\tilde h$, where
$\tilde\nabla$ denotes the Levi Civita connection of $(M,\tilde g)$.
Furthermore $\tilde h$ vanishes identically if and only if $\xi$ is
a Killing vector field and in this case
$(M,\tilde\varphi,\xi,\eta,\tilde g)$ is called a
\emph{$K$-paracontact manifold}. A normal paracontact metric
manifold is said to be a \emph{para-Sasakian manifold}. Also in this
context the para-Sasakian condition implies the \emph{K}-paracontact
condition and the converse holds in dimension $3$. In terms of the
covariant derivative of $\tilde\varphi$ the para-Sasakian condition
may be expressed by
\begin{equation}\label{condizioneparasasaki}
(\tilde \nabla_{X}\tilde\varphi)Y=-\tilde g(X,Y)\xi+\eta(Y)X.
\end{equation}
On the other hand one can prove (\cite{zamkovoy}) that in any
para-Sasakian manifold
\begin{equation}\label{condizioneparasasaki1}
\tilde R_{XY}\xi=\eta(Y)X-\eta(X)Y,
\end{equation}
but, unlike contact metric structures, the condition
\eqref{condizioneparasasaki1} not necessarily implies that the
manifold is para-Sasakian.

In any paracontact metric manifold Zamkovoy introduced a canonical
connection which  plays the same role in paracontact geometry of the
generalized Tanaka-Webster connection (\cite{tanno}) in a contact
metric manifold. In fact the following result holds.

\begin{theorem}[\cite{zamkovoy}]\label{paratanaka}
On a paracontact metric manifold there exists a unique connection
$\tilde{\nabla}^{pc}$, called the \emph{canonical paracontact
connection}, satisfying the following properties:
\begin{enumerate}
  \item[(i)] $\tilde{\nabla}^{pc}\eta=0$, $\tilde{\nabla}^{pc}\xi=0$,
  $\tilde{\nabla}^{pc}\tilde g=0$,
  \item[(ii)]
  $(\tilde\nabla^{pc}_{X}\tilde\varphi)Y=(\tilde\nabla_{X}\tilde\varphi)Y-\eta(Y)(X-\tilde h X)+\tilde g(X-\tilde h X,Y)\xi$,
  \item[(iii)] $\tilde T^{pc}(\xi,\tilde\varphi Y)=-\tilde\varphi \tilde T^{pc}(\xi,Y)$,
  \item[(iv)] $\tilde T^{pc}(X,Y)=2d\eta(X,Y)\xi$ on ${\mathcal D}=\ker(\eta)$.
\end{enumerate}
The explicit expression of this connection is the following
\begin{equation}\label{paradefinition}
\tilde\nabla^{pc}_{X}Y=\tilde\nabla_{X}Y+\eta(X)\tilde\varphi
Y+\eta(Y)(\tilde\varphi X-\tilde\varphi \tilde h X)+\tilde
g(X-\tilde hX,\tilde\varphi Y)\xi.
\end{equation}
Moreover, the torsion tensor field is given by
\begin{equation}\label{paratorsion}
\tilde T^{pc}(X,Y)=\eta(X)\tilde\varphi \tilde h Y -
\eta(Y)\tilde\varphi \tilde h X + 2g(X,\tilde\varphi Y)\xi.
\end{equation}
\end{theorem}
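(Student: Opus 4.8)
The plan is to split the statement into an existence part and a uniqueness part. For existence I would take the explicit formula \eqref{paradefinition} as the \emph{definition} of $\tilde\nabla^{pc}$ and verify directly that it is a linear connection satisfying (i)--(iv), reading off the torsion \eqref{paratorsion} along the way. For uniqueness I would show that the four conditions force the difference of any two admissible connections to vanish, which also clarifies the role played by each condition.

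For the existence verification I would write $\tilde\nabla^{pc}_X Y = \tilde\nabla_X Y + Q(X,Y)$ with $Q(X,Y) = \eta(X)\tilde\varphi Y + \eta(Y)(\tilde\varphi X - \tilde\varphi\tilde h X) + \tilde g(X - \tilde h X,\tilde\varphi Y)\xi$ and proceed property by property. Substituting $Y=\xi$ and using $\tilde\nabla_X\xi = -\tilde\varphi X + \tilde\varphi\tilde h X$ together with $\tilde\varphi\xi=0$, $\tilde h\xi=0$ gives $\tilde\nabla^{pc}_X\xi=0$. The identity $\tilde\nabla^{pc}\tilde g=0$ follows from $\tilde\nabla\tilde g=0$ once one checks that $\tilde g(Q(X,Y),Z)+\tilde g(Y,Q(X,Z))$ vanishes, using the compatibility \eqref{compatibile}, the symmetry of $\tilde h$, and the $\tilde g$-skewness of $\tilde\varphi$ (a consequence of $d\eta(X,Y)=\tilde g(X,\tilde\varphi Y)$ and the antisymmetry of $d\eta$); then $\tilde\nabla^{pc}\eta=0$ is immediate from $\eta=\tilde g(\cdot,\xi)$ and the two parallelisms just proved. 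Since $\tilde\nabla$ is torsion-free, the torsion of $\tilde\nabla^{pc}$ equals $Q(X,Y)-Q(Y,X)$; its $\xi$-coefficient is $\tilde g(X-\tilde h X,\tilde\varphi Y)-\tilde g(Y-\tilde h Y,\tilde\varphi X)$, whose $\tilde h$-part cancels because $\tilde h$ is symmetric, anticommutes with $\tilde\varphi$, and $\tilde\varphi$ is $\tilde g$-skew, leaving $2\tilde g(X,\tilde\varphi Y)\xi$ and hence \eqref{paratorsion}. Condition (iv) is then read off on $\mathcal D$ via $\tilde g(X,\tilde\varphi Y)=d\eta(X,Y)$; condition (iii) follows by setting $X=\xi$ to get $\tilde T^{pc}(\xi,Y)=\tilde\varphi\tilde h Y$ and using $\tilde\varphi\tilde h\tilde\varphi=-\tilde\varphi^2\tilde h$; and (ii) is obtained by expanding $(\tilde\nabla^{pc}_X\tilde\varphi)Y=\tilde\nabla^{pc}_X(\tilde\varphi Y)-\tilde\varphi\tilde\nabla^{pc}_X Y$ from the formula and simplifying with $\tilde\varphi^2=I-\eta\otimes\xi$ and $\tilde\varphi\tilde h+\tilde h\tilde\varphi=0$.

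For uniqueness I would let $\tilde\nabla^1,\tilde\nabla^2$ both satisfy (i)--(iv) and set $D(X,Y):=\tilde\nabla^1_X Y-\tilde\nabla^2_X Y$, a $(1,2)$-tensor, the goal being $D\equiv 0$. Condition (i) gives $\eta(D(X,Y))=0$ and $D(X,\xi)=0$, so $D$ is $\mathcal D$-valued and trivial in the $\xi$-slot, and moreover $d(X,Y,Z):=\tilde g(D(X,Y),Z)$ is skew in $Y,Z$. Since both connections realise the same right-hand side of (ii), one gets $D(X,\tilde\varphi Y)=\tilde\varphi D(X,Y)$. From (iv) the torsions agree on $\mathcal D$, so $d(X,Y,Z)$ is symmetric in $X,Y$ for $X,Y,Z\in\mathcal D$; combining this symmetry with the skew-symmetry in $Y,Z$, the standard argument that a trilinear form symmetric in its first two slots and skew in its last two must vanish yields $D=0$ on $\mathcal D\times\mathcal D$. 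It remains to treat $D(\xi,Y)$ for $Y\in\mathcal D$: subtracting the identities (iii) for the two connections gives $D(\xi,\tilde\varphi Y)=-\tilde\varphi D(\xi,Y)$, while (ii) gives $D(\xi,\tilde\varphi Y)=\tilde\varphi D(\xi,Y)$, so $\tilde\varphi D(\xi,Y)=0$; since $\ker\tilde\varphi=\mathbb{R}\xi$ whereas $D(\xi,Y)\in\mathcal D$ and $\mathbb{R}\xi\cap\mathcal D=\{0\}$, we conclude $D(\xi,Y)=0$, hence $D\equiv 0$.

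The routine but delicate part is the existence check, especially $\tilde\nabla^{pc}\tilde g=0$ and condition (ii): both demand repeated use of $\tilde\varphi^2=I-\eta\otimes\xi$, $\tilde\varphi\tilde h=-\tilde h\tilde\varphi$, the symmetry of $\tilde h$ and the $\tilde g$-skewness of $\tilde\varphi$, together with careful bookkeeping of the $\xi$-components. The conceptual core, by contrast, is the uniqueness argument, where the roles of the conditions become transparent: (i) supplies metric-skewness and the vanishing in the $\xi$-directions, (iv) supplies symmetry along $\mathcal D$, and (ii)--(iii) together annihilate the remaining transverse component $D(\xi,\cdot)$.
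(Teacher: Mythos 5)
The paper does not prove this statement at all: Theorem \ref{paratanaka} is imported verbatim from \cite{zamkovoy} as a background result, so there is no internal proof to compare against. Your blind proof is correct and self-contained, and it follows the natural scheme one would expect (and that the cited source follows): take \eqref{paradefinition} as the definition, verify (i)--(iv) and \eqref{paratorsion} using $\tilde\nabla\xi=-\tilde\varphi+\tilde\varphi\tilde h$, the $\tilde g$-skewness of $\tilde\varphi$, the symmetry of $\tilde h$ and $\tilde\varphi\tilde h+\tilde h\tilde\varphi=0$; then prove uniqueness via the difference tensor $D$. I checked the individual computations (the cancellation of the $\tilde h$-terms in the $\xi$-coefficient of the torsion, the identity $\tilde T^{pc}(\xi,Y)=\tilde\varphi\tilde h Y$ using $\tilde g(\xi,\tilde\varphi Y)=\eta(\tilde\varphi Y)=0$, and the expansion giving (ii)) and they all hold. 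The uniqueness argument is also sound and is the conceptually nicest part: (i) makes $D$ horizontal, kills the $\xi$-slot and gives skewness of $\tilde g(D(X,Y),Z)$ in $(Y,Z)$; (iv) gives symmetry in $(X,Y)$ along $\mathcal D$, and the symmetric/skew juggling kills $D$ on $\mathcal D\times\mathcal D$; (ii) and (iii) force $\tilde\varphi D(\xi,Y)=0$, hence $D(\xi,Y)\in\mathbb{R}\xi\cap\mathcal D=\{0\}$. One small point worth making explicit: to pass from $\tilde g(D(X,Y),Z)=0$ for all $Z\in\Gamma(\mathcal D)$ to $D(X,Y)=0$ you need the restriction of $\tilde g$ to $\mathcal D$ to be non-degenerate; since $\tilde g$ is only semi-Riemannian (signature $(n+1,n)$) this is not automatic for an arbitrary subbundle, but it does hold here because $\mathcal D=\xi^{\perp}$ and $\tilde g(\xi,\xi)=1$.
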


An almost paracontact structure $(\tilde\varphi,\xi,\eta)$ is said
to be \emph{integrable} (\cite{zamkovoy}) if the almost paracomplex
structure $\tilde\varphi|_{\mathcal D}$ satisfies the condition
$N_{\tilde\varphi}(X,Y)\in\Gamma(\mathbb{R}\xi)$  for all
$X,Y\in\Gamma(\mathcal D)$. This is equivalent to require that the
eigendistributions $T^{\pm}$ of $\tilde\varphi$ satisfy
$[T^{\pm},T^{\pm}]\subset T^{\pm}\oplus\mathbb{R}\xi$. For an
integrable paracontact metric manifold, the canonical paracontact
connection shares many of the properties of the Tanaka-Webster
connection on CR-manifolds. For instance we have the following
result.

\begin{theorem}[\cite{zamkovoy}]\label{integrability}
A paracontact metric manifold $(M,\tilde\varphi,\xi,\eta,\tilde g)$
is integrable if and only if the canonical paracontact connection
parallelizes the structure tensor $\tilde\varphi$.
\end{theorem}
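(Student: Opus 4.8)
The plan is to reduce the stated equivalence to a purely algebraic statement on the contact distribution $\mathcal D$ and then to recognize it as the para-complex analogue of the classical fact that an integrable almost K\"ahler structure is K\"ahler. By property (ii) of Theorem \ref{paratanaka}, the condition $\tilde\nabla^{pc}\tilde\varphi=0$ is equivalent to
\begin{equation*}
(\tilde\nabla_X\tilde\varphi)Y=\eta(Y)(X-\tilde h X)-\tilde g(X-\tilde h X,Y)\xi=:A(X,Y)
\end{equation*}
for all $X,Y\in\Gamma(TM)$. First I would dispose of the directions involving $\xi$. Using $\tilde\nabla_X\xi=-\tilde\varphi X+\tilde\varphi\tilde h X$, $\tilde\varphi^2=I-\eta\otimes\xi$ and $\eta\circ\tilde h=0$, a direct computation gives $(\tilde\nabla_X\tilde\varphi)\xi=X-\tilde h X-\eta(X)\xi=A(X,\xi)$ in \emph{any} paracontact metric manifold, so the slot $Y=\xi$ is automatic; the slot $X=\xi$ is automatic as well, since $A(\xi,Y)=0$ while $\tilde\nabla_\xi\tilde\varphi=0$ holds universally (\cite{zamkovoy}). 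Moreover, differentiating $\tilde g(\tilde\varphi\cdot,\cdot)=-\tilde g(\cdot,\tilde\varphi\cdot)$ and using $\tilde\nabla\tilde g=0$ shows that $(\tilde\nabla_X\tilde\varphi)$ is $\tilde g$-skew-adjoint, whence for $X,Y\in\Gamma(\mathcal D)$ the $\xi$-component of $(\tilde\nabla_X\tilde\varphi)Y$ equals $-\tilde g(Y,(\tilde\nabla_X\tilde\varphi)\xi)\,\xi=-\tilde g(X-\tilde h X,Y)\xi$, which is exactly the $\xi$-part of $A(X,Y)$. Consequently $\tilde\nabla^{pc}\tilde\varphi=0$ is equivalent to the vanishing, for all $X,Y\in\Gamma(\mathcal D)$, of the $\mathcal D$-component $G(X,Y):=[(\tilde\nabla_X\tilde\varphi)Y]_{\mathcal D}$.

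Next I would relate $G$ to the Nijenhuis tensor. Writing each Lie bracket in the definition of $N_{\tilde\varphi}$ through the torsion-free connection $\tilde\nabla$ and collecting terms, all covariant derivatives of $X$ and $Y$ cancel and one obtains
\begin{equation*}
N_{\tilde\varphi}(X,Y)=(\tilde\nabla_{\tilde\varphi X}\tilde\varphi)Y-(\tilde\nabla_{\tilde\varphi Y}\tilde\varphi)X+\tilde\varphi(\tilde\nabla_Y\tilde\varphi)X-\tilde\varphi(\tilde\nabla_X\tilde\varphi)Y-2d\eta(X,Y)\xi.
\end{equation*}
Taking the $\mathcal D$-component for $X,Y\in\Gamma(\mathcal D)$ (recall $\tilde\varphi$ preserves $\mathcal D$ and annihilates the $\xi$-parts) gives $[N_{\tilde\varphi}(X,Y)]_{\mathcal D}=G(\tilde\varphi X,Y)-G(\tilde\varphi Y,X)+\tilde\varphi G(Y,X)-\tilde\varphi G(X,Y)$. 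Since integrability means precisely $[N_{\tilde\varphi}(X,Y)]_{\mathcal D}=0$, the forward implication $\tilde\nabla^{pc}\tilde\varphi=0\Rightarrow$ integrable is immediate from $G\equiv 0$.

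The main obstacle is the converse. Here I would introduce the trilinear form $\Gamma(X,Y,Z):=\tilde g(G(X,Y),Z)$ on $\mathcal D$ and assemble the three linear constraints it must satisfy: skew-adjointness of $(\tilde\nabla_X\tilde\varphi)$ gives $\Gamma(X,Y,Z)=-\Gamma(X,Z,Y)$; closedness of the fundamental form $\Phi=d\eta$ (it is exact, so $d\Phi=0$) together with the torsion-free identity $d\Phi(X,Y,Z)=\sum_{\mathrm{cyc}}(\tilde\nabla_X\Phi)(Y,Z)$ yields the cyclic relation $\Gamma(X,Z,Y)+\Gamma(Y,X,Z)+\Gamma(Z,Y,X)=0$; and the vanishing of $[N_{\tilde\varphi}]_{\mathcal D}$, paired with $W\in\Gamma(\mathcal D)$ and combined with the $\tilde g$-skew-adjointness of $\tilde\varphi$, gives $\Gamma(\tilde\varphi X,Y,W)-\Gamma(\tilde\varphi Y,X,W)+\Gamma(X,Y,\tilde\varphi W)-\Gamma(Y,X,\tilde\varphi W)=0$. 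The task is then to show that these three identities force $\Gamma\equiv 0$; this is exactly the para-complex counterpart of the standard computation proving that an integrable almost para-K\"ahler structure is para-K\"ahler, and I expect the delicate point to be carrying out the index permutations with the correct signs dictated by $\tilde\varphi^2=I$ and the anti-isometry $\tilde g(\tilde\varphi\cdot,\tilde\varphi\cdot)=-\tilde g(\cdot,\cdot)$ on $\mathcal D$. Once $\Gamma\equiv 0$, i.e. $G\equiv 0$, the reduction of the first paragraph gives $\tilde\nabla^{pc}\tilde\varphi=0$, completing the proof.
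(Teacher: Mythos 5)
There is an important framing point first: the paper does \emph{not} prove this statement --- Theorem \ref{integrability} is quoted from \cite{zamkovoy} --- so your attempt has to stand entirely on its own. Its first two paragraphs do stand. The reduction of $\tilde\nabla^{pc}\tilde\varphi=0$ to the vanishing of $G(X,Y):=[(\tilde\nabla_X\tilde\varphi)Y]_{\mathcal D}$ for $X,Y\in\Gamma(\mathcal D)$ is correct: the computations $(\tilde\nabla_X\tilde\varphi)\xi=X-\tilde hX-\eta(X)\xi$, $A(\xi,\cdot)=0$, and the matching of $\xi$-components via skew-adjointness all check out, and $\tilde\nabla_\xi\tilde\varphi=0$ does hold on any paracontact metric manifold (it follows from $\mathcal L_\xi\tilde\varphi=2\tilde h$, $\tilde\nabla\xi=-\tilde\varphi+\tilde\varphi\tilde h$ and $\tilde h\tilde\varphi=-\tilde\varphi\tilde h$, so the citation is harmless). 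Your expression of $N_{\tilde\varphi}$ through $\tilde\nabla$ and the resulting formula for $[N_{\tilde\varphi}]_{\mathcal D}$ are also correct, which settles the implication ``parallel $\Rightarrow$ integrable''. The genuine gap is in the converse, and it sits exactly where you yourself locate the difficulty: you never show that your three identities force $\Gamma\equiv 0$; you only assert that this ``is exactly the para-complex counterpart of the standard computation''. That cannot be quoted as a black box: $(\mathcal D,\tilde\varphi|_{\mathcal D},\tilde g|_{\mathcal D})$ is not an almost para-Hermitian manifold (the distribution $\mathcal D$ is maximally non-integrable, the Levi Civita connection does not preserve it, and the $\tilde h$- and $\eta$-terms have already contaminated your constraints), so the para-K\"ahler theorem does not apply and the sign-sensitive algebra, which you admit is the delicate point, is precisely the mathematical content of the hard direction. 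As written, that direction is unproven.

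The good news is that the missing claim is true, with exactly your three identities (the fourth one, $\Gamma(X,\tilde\varphi Y,Z)=\Gamma(X,Y,\tilde\varphi Z)$, coming from differentiating $\tilde\varphi^2=I-\eta\otimes\xi$, turns out to be unnecessary), so the plan can be completed. Decompose $\mathcal D$ pointwise into the $\pm1$ eigenbundles $T^{\pm}$ of $\tilde\varphi$, which are $\tilde g$-isotropic and dually paired by $\tilde g$, and evaluate (c) on arguments of fixed type. For $(X,Y,W)$ of types $(+,-,+)$, $(-,+,-)$ (and their mirrors) all terms but two cancel and (c) reads $\pm 2\Gamma(X,Y,W)=0$, killing every component whose last two slots have opposite type; for types $(+,+,+)$ and $(-,-,-)$, (c) reads $\Gamma(X,Y,W)=\Gamma(Y,X,W)$, and symmetry in the first two slots combined with your identity (a) kills these components by the braid argument $\Gamma(X,Y,Z)=\Gamma(Y,X,Z)=-\Gamma(Y,Z,X)=-\Gamma(Z,Y,X)=\Gamma(Z,X,Y)=\Gamma(X,Z,Y)=-\Gamma(X,Y,Z)$. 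The two remaining types, $(-,+,+)$ and $(+,-,-)$, die by the cyclic identity (b), since in $\Gamma(f,e,e')+\Gamma(e,e',f)+\Gamma(e',f,e)=0$ the last two summands have mixed last-two slots and are already zero. This exhausts all eight types, so $\Gamma\equiv 0$, $G\equiv 0$, and your reduction finishes the proof. For comparison, Zamkovoy's own argument in \cite{zamkovoy} packages identities (a), (b), (c) into a single paracontact analogue of Blair's fundamental formula expressing $2\tilde g((\tilde\nabla_X\tilde\varphi)Y,Z)$ through $N_{\tilde\varphi}$ and $\eta$-terms, from which both implications are read off at once; your disassembled version is equivalent, but only once the type-by-type computation above is actually carried out.
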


In particular, by Theorem \ref{integrability} and
\eqref{condizioneparasasaki} it follows that any para-Sasakian
manifold is integrable.

\subsection{Bi-Legendrian manifolds}\label{preliminari}
Let $(M,\eta)$ be a $(2n+1)$-dimensional contact manifold. It is
well-known that the contact condition $\eta\wedge(d\eta)^n\neq 0$
geometrically means that the contact distribution $\mathcal D$ is as
far as possible from being integrable integrable. In fact one can
prove that the maximal dimension of an involutive subbundle of
$\mathcal D$ is $n$. Such $n$-dimensional integrable distributions
are called \emph{Legendre foliations} of $(M,\eta)$. More  generally
a \emph{Legendre distribution} on a contact manifold $(M,\eta)$ is
an $n$-dimensional subbundle $L$ of the contact distribution not
necessarily integrable but verifying the weaker condition that
$d\eta\left(X,X'\right)=0$ for all $X,X'\in\Gamma\left(L\right)$.

The theory of Legendre foliations has been extensively investigated
in recent years from various points of views. In particular  Pang
(\cite{pang}) provided a classification of Legendre foliations using
 a bilinear symmetric form $\Pi_{\mathcal F}$ on the tangent bundle
of the foliation ${\mathcal F}$, defined by
\begin{equation*}
\Pi_{\mathcal F}\left(X,X'\right)=-\left({\mathcal L}_{X}{\mathcal
L}_{X'}\eta\right)\left(\xi\right)=2d\eta([\xi,X],X').
\end{equation*}
He called a Legendre foliation \emph{positive (negative) definite},
\emph{non-degenerate}, \emph{degenerate} or \emph{flat} according to
the circumstance that the bilinear form $\Pi_{\mathcal F}$ is
positive (negative) definite, non-degenerate, degenerate or vanishes
identically, respectively.   Then for a non-degenerate Legendre
foliation $\mathcal F$,  Libermann (\cite{libermann}) defined a
linear map $\Lambda_{\mathcal F}:TM\longrightarrow T{\mathcal F}$,
whose kernel is ${T\mathcal F}\oplus\mathbb{R}\xi$, such that
\begin{equation}\label{lambda}
\Pi_{\mathcal F}(\Lambda_{\mathcal F} Z,X)=d\eta(Z,X)
\end{equation}
for any $Z\in\Gamma(TM)$, $X\in\Gamma(T{\mathcal F})$. The operator
$\Lambda_{\mathcal F}$ is surjective and  verifies
$(\Lambda_{\mathcal F})^2=0$,  $\Lambda_{\mathcal
F}[\xi,X]=\frac{1}{2}X$ for all $X\in\Gamma(T{\mathcal F})$.  Then
one can extend $\Pi_{\mathcal F}$ to a symmetric bilinear form on
$TM$ by putting
\begin{equation*}
\overline\Pi_{\mathcal F}(Z,Z'):=\left\{
                                   \begin{array}{ll}
                                     \Pi_{\mathcal F}(Z,Z') & \hbox{if $Z,Z'\in\Gamma(T{\mathcal F})$} \\
                                     \Pi_{\mathcal F}(\Lambda_{\mathcal
F} Z,\Lambda_{\mathcal F} Z'), & \hbox{otherwise.}
                                   \end{array}
                                 \right.
\end{equation*}

If $(M,\eta)$  is endowed with  two  transversal  Legendre
distributions $L_1$ and $L_2$,  we  say  that $(M,\eta,L_1,L_2)$ is
an \emph{almost bi-Legendrian manifold}. Thus, in particular, the
tangent bundle of $M$ splits up as the direct sum $TM=L_1\oplus
L_2\oplus\mathbb{R}\xi$. When both $L_1$ and $L_2$ are integrable we
refer to a \emph{bi-Legendrian manifold}. An (almost) bi-Legendrian
manifold is said to be flat, degenerate or non-degenerate if and
only if both the Legendre distributions are flat, degenerate or
non-degenerate, respectively.   Any contact manifold $(M,\eta)$
endowed with a Legendre distribution $L$ admits a canonical almost
bi-Legendrian structure. Indeed let $(\varphi,\xi,\eta,g)$ be a
compatible contact metric structure. Then  the relation $d\eta(\phi
X,\phi Y)=d\eta(X,Y)$  easily implies that $Q:=\phi L$ is a Legendre
distribution on $M$ which is $g$-orthogonal to $L$. $Q$ is usually
referred as the  \emph{conjugate Legendre distribution} of $L$ and
in general is not involutive, even if $L$ is.

In \cite{Mino-05} the existence of  a canonical connection on an
almost bi-Legendrian manifold has been proven:

\begin{theorem}[\cite{Mino-05}]\label{biconnection}
Let $(M,\eta,L_1,L_2)$ be an almost bi-Legendrian manifold. There
exists a unique linear connection ${\nabla}^{bl}$, called
\emph{bi-Legendrian connection}, satisfying the following
properties:
\begin{enumerate}
  \item[(i)] ${\nabla}^{bl} L_1\subset L_1$, \ ${\nabla}^{bl} L_2\subset L_2$,
  \item[(ii)]  ${\nabla}^{bl}\xi=0$, \ ${\nabla}^{bl} d\eta=0$,
  \item[(iii)] ${T}^{bl}\left(X,Y\right)=2d\eta\left(X,Y\right){\xi}$ \ for all
  $X\in\Gamma(L_1)$, $Y\in\Gamma(L_2)$,\\
${T}^{bl}\left(X,\xi\right)=[\xi,X_{L_1}]_{L_2}+[\xi,X_{L_2}]_{L_1}$
  \ for all   $X\in\Gamma\left(TM\right)$,
\end{enumerate}
where ${T}^{bl}$ denotes the torsion tensor field of ${\nabla}^{bl}$
and $X_{L_1}$ and $X_{L_2}$ the projections of $X$ onto the
subbundles $L_1$ and $L_2$ of $TM$, respectively.
\end{theorem}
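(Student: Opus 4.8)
The plan is to exploit the splitting $TM = L_1\oplus L_2\oplus\mathbb{R}\xi$ together with the fact that, since $L_1$ and $L_2$ are complementary Legendre distributions, the restriction of $d\eta$ to $L_1\times L_2$ is a non-degenerate pairing. I would first establish uniqueness by assuming a connection $\nabla^{bl}$ with properties (i)--(iii) exists and squeezing out explicit formulas for all of its components, and then prove existence by taking those formulas as a definition and checking that they produce a genuine connection satisfying (i)--(iii).

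For uniqueness, write every vector field as $X = X_{L_1}+X_{L_2}+\eta(X)\xi$. Since $\nabla^{bl}\xi = 0$ and $\nabla^{bl}$ preserves $L_1$ and $L_2$, the Leibniz rule reduces the computation of $\nabla^{bl}_X Y$ to the pieces $\nabla^{bl}_X Y_{L_1}\in\Gamma(L_1)$, $\nabla^{bl}_X Y_{L_2}\in\Gamma(L_2)$ and the term $X(\eta(Y))\xi$. The mixed terms are forced by the first torsion identity in (iii): projecting $T^{bl}(X,Y)=2d\eta(X,Y)\xi$ onto $L_1$ and $L_2$ for $X\in\Gamma(L_1)$, $Y\in\Gamma(L_2)$ yields $\nabla^{bl}_X Y = [X,Y]_{L_2}$ and $\nabla^{bl}_Y X = [Y,X]_{L_1}$, while the $\xi$-component is automatic because $\eta([X,Y]) = -2d\eta(X,Y)$ on $\Gamma(\mathcal D)$. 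The derivative along $\xi$ is forced by the second identity in (iii): solving for $\nabla^{bl}_\xi X$ and expanding brackets gives $\nabla^{bl}_\xi X = [\xi,X_{L_1}]_{L_1}+[\xi,X_{L_2}]_{L_2}+\xi(\eta(X))\xi$. The only components not yet determined are $\nabla^{bl}_X Y$ with $X,Y$ in the same $L_i$; here I would invoke $\nabla^{bl}d\eta = 0$. For $X,Y\in\Gamma(L_1)$ and an arbitrary test field $W\in\Gamma(L_2)$, parallelism of $d\eta$ gives $d\eta(\nabla^{bl}_X Y, W) = X(d\eta(Y,W)) - d\eta(Y,[X,W]_{L_2})$, and since $d\eta$ pairs $L_1$ with $L_2$ non-degenerately this determines $\nabla^{bl}_X Y\in\Gamma(L_1)$ uniquely; the case $X,Y\in\Gamma(L_2)$ is symmetric. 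This exhausts all components, proving uniqueness.

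For existence, I would declare $\nabla^{bl}$ by exactly these formulas and check it is well defined. The defining relation for the diagonal terms is consistent because its right-hand side is $C^\infty(M)$-linear in $W$ (the $(Xf)$-contributions from $[X,fW]_{L_2}$ and from $X(d\eta(Y,fW))$ cancel), so non-degeneracy produces a unique section; tensoriality in $X$ and the Leibniz rule in $Y$ are then routine. Property (i) and the first half of (ii), $\nabla^{bl}\xi = 0$, hold by construction. The torsion identities in (iii) are recovered by direct substitution: for $X\in\Gamma(L_1)$, $Y\in\Gamma(L_2)$ one gets $T^{bl}(X,Y) = [X,Y]_{L_2}+[X,Y]_{L_1}-[X,Y] = -\eta([X,Y])\xi = 2d\eta(X,Y)\xi$, and the expression for $T^{bl}(X,\xi)$ drops out after cancelling the $[\xi,\cdot]_{L_i}$ terms.

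The main obstacle is verifying the remaining half of (ii), namely $\nabla^{bl}d\eta = 0$ on all of $TM$, since the formulas were rigged to enforce it only for a specific family of arguments. Here I would reduce to the only potentially nontrivial component $(\nabla^{bl}_X d\eta)(Y,Z)$ with $Y\in\Gamma(L_1)$, $Z\in\Gamma(L_2)$: the cases $X\in\Gamma(L_1)$ and $X\in\Gamma(L_2)$ vanish precisely by the defining relations for the diagonal terms; components pairing two fields from the same $L_i$ or involving $\xi$ vanish trivially because $d\eta$ annihilates $L_i\times L_i$ and $\mathbb{R}\xi$; and the case $X=\xi$ follows from $\mathcal{L}_\xi d\eta = d(i_\xi d\eta) = 0$ together with $\eta([\xi,Y]) = 0$, which forces $\xi(d\eta(Y,Z)) = d\eta([\xi,Y]_{L_1},Z)+d\eta(Y,[\xi,Z]_{L_2})$ automatically. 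Assembling these verifications completes the existence proof.
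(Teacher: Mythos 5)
Your proposal is correct, but there is nothing in this paper to compare it against: Theorem \ref{biconnection} is stated as a quoted result from \cite{Mino-05} and the paper gives no proof of it, using it only as a tool. So your argument has to be judged on its own, and it holds up. The uniqueness part correctly squeezes every component out of the axioms: the first torsion identity in (iii), projected along the splitting $TM=L_1\oplus L_2\oplus\mathbb{R}\xi$, forces the mixed components $\nabla^{bl}_X Y=[X,Y]_{L_2}$ and $\nabla^{bl}_Y X=[Y,X]_{L_1}$ (with the $\xi$-component consistent because $\eta([X,Y])=-2d\eta(X,Y)$ on $\Gamma(\mathcal D)$); the second torsion identity forces $\nabla^{bl}_\xi X=[\xi,X_{L_1}]_{L_1}+[\xi,X_{L_2}]_{L_2}+\xi(\eta(X))\xi$, using that $\xi$ is an infinitesimal automorphism of $\mathcal D$; and $\nabla^{bl}d\eta=0$ pins down the diagonal components through the non-degeneracy of the pairing $d\eta\colon L_1\times L_2\to\mathbb{R}$, which holds since $d\eta$ is non-degenerate on $\mathcal D$ and each $L_i$ is isotropic. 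The existence part identifies exactly the verifications that matter and they all check: the $C^\infty(M)$-linearity in the test field $W$ (the $(Xf)$-terms do cancel, and tensoriality in $X$ uses $X_{L_2}=0$ for $X\in\Gamma(L_1)$); the recovery of both torsion identities; and, for the only non-trivial component of $\nabla^{bl}d\eta=0$, the case $X=\xi$, where $\mathcal{L}_\xi d\eta=0$ together with $\eta([\xi,Y])=0$ and the vanishing of $d\eta$ on $L_i\times L_i$ gives precisely $\xi(d\eta(Y,Z))=d\eta([\xi,Y]_{L_1},Z)+d\eta(Y,[\xi,Z]_{L_2})$. This componentwise determination of the connection from the axioms is the standard route to this theorem (and is in the spirit of the construction in the cited reference), so I see no gap.
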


The behavior of the bi-Legendrian connection in the case of
conjugate Legendre distributions was considered in \cite{Mino-07},
where the following theorem was proven.

\begin{theorem}[\cite{Mino-07}]\label{lemmarocky}
Let $(M,\varphi,\xi,\eta,g)$ be a contact metric manifold endowed
with a Legendre distribution $L$. Let $Q:=\varphi L$ be the
conjugate Legendre distribution of $L$ and $\nabla^{bl}$ the
bi-Legendrian connection associated with $\left(L,Q\right)$. Then
the following statements are equivalent:
\begin{enumerate}
    \item[(i)] $\nabla^{bl} g=0$.
    \item[(ii)] $\nabla^{bl}\varphi=0$.
    \item[(iii)] $\nabla^{bl}_{X}X'=-\left(\varphi\left[X,\varphi
    X'\right]\right)_L$ for all $X,X'\in\Gamma\left(L\right)$, $\nabla^{bl}_{Y}Y'=-\left(\varphi\left[Y,\varphi
    Y'\right]\right)_Q$ for all $Y,Y'\in\Gamma\left(Q\right)$ and
    the tensor field $h$ maps the subbundle $L$
    onto $L$ and the subbundle $Q$ onto $Q$.
    \item[(iv)] $g$ is a bundle-like metric with respect
both to the distribution $L\oplus \mathbb{R}\xi$ and to the
distribution $Q\oplus \mathbb{R}\xi$.
\end{enumerate}
Furthermore, assuming $L$ and $Q$ integrable, (i)--(iv) are
equivalent to the total geodesicity (with respect to the Levi Civita
connection of $g$) of the Legendre foliations defined by $L$ and
$Q$.
\end{theorem}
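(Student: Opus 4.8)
The plan is to reduce every one of the four conditions to the single statement $\nabla^{bl}\varphi=0$, exploiting that $\nabla^{bl}$ is built from $(\eta,L,Q)$ alone. First I record the ambient algebra from \eqref{acca}--\eqref{eq-contact-7} and Theorem \ref{biconnection}. Since $L$ is Legendre and $Q=\varphi L$, one has $g(X,\varphi X')=d\eta(X,X')=0$ for $X,X'\in\Gamma(L)$, so $L\perp Q$, $L,Q\perp\xi$, $\varphi$ restricts to a $g$-isometry interchanging $L$ and $Q$ with $\varphi^2=-I$ on $\mathcal D$, and $h\varphi=-\varphi h$ forces $hL\subseteq L\Leftrightarrow hQ\subseteq Q$. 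Because $\nabla^{bl}$ preserves $L,Q,\mathbb{R}\xi$ and $\nabla^{bl}\xi=0$, it also kills $\eta$ (for $X\in\Gamma(L)$, $(\nabla^{bl}_Z\eta)(X)=-\eta(\nabla^{bl}_Z X)=0$, and likewise on $Q$ and $\xi$), and differentiating $\varphi^2=-I+\eta\otimes\xi$ shows that each $\nabla^{bl}_Z\varphi$ anticommutes with $\varphi$; in particular $(\nabla^{bl}_Z\varphi)|_L=0\Leftrightarrow(\nabla^{bl}_Z\varphi)|_Q=0$.

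The engine of the proof is a coupling identity. Using the compatibility $d\eta(X,Y)=g(X,\varphi Y)$ of \eqref{acca} (so $g(X',X'')=-d\eta(X',\varphi X'')$ on $\mathcal D$), together with $\nabla^{bl}d\eta=0$ and $\nabla^{bl}\eta=0$, a direct computation gives, for $X',X''\in\Gamma(L)$ and any $Z$,
\begin{equation*}
(\nabla^{bl}_Z g)(X',X'')=-d\eta\bigl(X',(\nabla^{bl}_Z\varphi)X''\bigr).
\end{equation*}
Since $(\nabla^{bl}_Z\varphi)X''\in\Gamma(Q)$ and $d\eta$ pairs $L$ with $Q$ nondegenerately, $(\nabla^{bl}_Z g)|_{L\times L}=0\Leftrightarrow(\nabla^{bl}_Z\varphi)|_L=0$, and the symmetric statement holds on $Q$. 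Moreover, as $\nabla^{bl}$ preserves the splitting with $L\perp Q$, the mixed block $g(L,Q)$ and, via $\nabla^{bl}\eta=0$, the $\xi$-blocks of $\nabla^{bl}g$ vanish automatically. Hence $\nabla^{bl}g=0$ is equivalent to $(\nabla^{bl}_Z\varphi)|_L=0$ for all $Z$, i.e. (by the anticommuting remark) to $\nabla^{bl}\varphi=0$; this proves (i)$\Leftrightarrow$(ii).

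To pass to (iii) I evaluate $\nabla^{bl}_Z\varphi$ directionwise. For $X,X'\in\Gamma(L)$ the torsion law $T^{bl}(X,\varphi X')=2d\eta(X,\varphi X')\xi$ of Theorem \ref{biconnection} together with $\nabla^{bl}_{\varphi X'}X\in\Gamma(L)$ gives, on projecting to $Q$, $\nabla^{bl}_X(\varphi X')=[X,\varphi X']_Q$; hence
\begin{equation*}
(\nabla^{bl}_X\varphi)X'=[X,\varphi X']_Q-\varphi\nabla^{bl}_X X',
\end{equation*}
which vanishes exactly when $\nabla^{bl}_X X'=-\varphi[X,\varphi X']_Q=-(\varphi[X,\varphi X'])_L$, i.e. exactly the first formula of (iii); the second follows symmetrically on $Q$. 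For the $\xi$-direction, $T^{bl}(X,\xi)=[\xi,X]_Q$ ($X\in\Gamma(L)$) and $\nabla^{bl}\xi=0$ yield $\nabla^{bl}_\xi X=[\xi,X]_L$, so $(\nabla^{bl}_\xi g)(X,X')=(\mathcal L_\xi g)(X,X')=-2g(X,\varphi hX')$ by \eqref{eq-contact-7}; as only $\varphi(hX')_Q$ pairs with $X\in\Gamma(L)$, this vanishes for all such $X,X'$ precisely when $hL\subseteq L$, the last clause of (iii). Because controlling $(\nabla^{bl}_Z\varphi)$ on one of $L,Q$ controls it on both, the three clauses of (iii) assemble to $\nabla^{bl}\varphi=0$, giving (ii)$\Leftrightarrow$(iii).

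For (iv) I would invoke the classical characterization of bundle-like metrics (Reinhart): $g$ is bundle-like for $L\oplus\mathbb{R}\xi$, whose $g$-orthogonal complement is exactly $Q$, precisely when the transverse metric $g|_Q$ is holonomy-invariant, i.e. $(\nabla^{bl}_Z g)|_{Q\times Q}=0$ for all $Z\in\Gamma(L\oplus\mathbb{R}\xi)$ (here $\nabla^{bl}$ induces the transverse connection on $Q$, since it preserves $Q$ and its mixed torsion is proportional to $\xi$); by the coupling identity this forces $(\nabla^{bl}_Z\varphi)=0$ for $Z\in\Gamma(L\oplus\mathbb{R}\xi)$, and the same for $Q\oplus\mathbb{R}\xi$ covers $Z\in\Gamma(Q\oplus\mathbb{R}\xi)$, so the two bundle-like conditions together are equivalent to $\nabla^{bl}\varphi=0$. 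Finally, assuming $L,Q$ integrable, a leaf of $\mathcal F_L$ is totally geodesic iff $(\nabla_X X')_Q=0$ and $(\nabla_X X')_{\mathbb{R}\xi}=0$ for $X,X'\in\Gamma(L)$; the $\xi$-normal part equals $g(X',\varphi hX)\xi$ and vanishes iff $hL\subseteq L$, while the $Q$-normal part, via integrability, reproduces the connection formula of (iii), and symmetrically for $\mathcal F_Q$, so total geodesicity of both foliations is equivalent to (iii). The main obstacle I anticipate is precisely this last identification together with the bundle-like translation: establishing the coupling identity cleanly and, in the integrable case, matching the normal components of the Levi-Civita second fundamental forms of the leaves with the formulas in (iii). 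The conceptual key is that $\nabla^{bl}d\eta=0$ turns everything into the single condition $\nabla^{bl}\varphi=0$, and the $\xi$-direction identity $(\nabla^{bl}_\xi g)=(\mathcal L_\xi g)$ is what isolates the tensor $h$.
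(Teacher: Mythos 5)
The paper itself contains no proof of this statement: Theorem \ref{lemmarocky} is quoted as a preliminary directly from \cite{Mino-07}, so there is no internal argument to compare yours against, and I can only judge your proposal on its own merits. It is essentially sound. The hinge of your argument, the coupling identity $(\nabla^{bl}_Z g)(X',X'')=-d\eta\bigl(X',(\nabla^{bl}_Z\varphi)X''\bigr)$ for $X',X''\in\Gamma(L)$, is correct: it follows from $g(A,B)=-d\eta(A,\varphi B)$ on $\mathcal D$, from $\nabla^{bl}d\eta=0$, and from the fact that $\nabla^{bl}$ preserves $L$ and $Q$; and since $(\nabla^{bl}_Z\varphi)X''\in\Gamma(Q)$ while $d\eta$ pairs $L$ with $Q$ nondegenerately, it converts each block of $\nabla^{bl}g$ into the corresponding restriction of $\nabla^{bl}\varphi$. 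Your directionwise computations also check out: $\nabla^{bl}_X(\varphi X')=[X,\varphi X']_Q$ and $\nabla^{bl}_\xi X=[\xi,X]_L$ are forced by the torsion axioms of Theorem \ref{biconnection}; the identity $(\nabla^{bl}_\xi g)|_{L\times L}=({\mathcal L}_\xi g)|_{L\times L}=-2g(\cdot,\varphi h\,\cdot)$ correctly isolates the condition $hL\subseteq L$; and the anticommutation of $\nabla^{bl}_Z\varphi$ with $\varphi$ (together with $(\nabla^{bl}_Z\varphi)\xi=0$) lets the three clauses of (iii), which control the $L$-, $Q$- and $\xi$-directions separately, assemble into the single condition $\nabla^{bl}\varphi=0$.

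Two points need tightening. First, in (iv) you appeal to ``Reinhart's classical characterization'', but $L\oplus\mathbb{R}\xi$ and $Q\oplus\mathbb{R}\xi$ are not assumed integrable in (i)--(iv) (only in the final clause are $L$ and $Q$ integrable), so there is no foliation to which Reinhart's theorem applies; you must take the Lie-derivative condition $({\mathcal L}_W g)(Y,Y')=0$ for $W\in\Gamma(L\oplus\mathbb{R}\xi)$, $Y,Y'\in\Gamma(Q)$ as the \emph{definition} of bundle-like here, after which your identification $({\mathcal L}_W g)|_{Q\times Q}=(\nabla^{bl}_W g)|_{Q\times Q}$ (valid precisely because $\nabla^{bl}_W Y=[W,Y]_Q$, the torsion being proportional to $\xi$) completes the step. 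Second, in the total-geodesicity argument the correspondence is crossed: by the Koszul formula and integrability of $L$, the $Q$-normal part of the second fundamental form of ${\mathcal F}_L$ satisfies $2g(\nabla_X X',Y)=-(\nabla^{bl}_Y g)(X,X')$, so it controls the $Q$-direction derivatives $(\nabla^{bl}_Y\varphi)|_L$, hence (via anticommutation) the \emph{second} formula of (iii), not the first; the symmetric statement for ${\mathcal F}_Q$ supplies the $L$-directions. Since every piece remains equivalent, jointly, to $\nabla^{bl}\varphi=0$, your proof survives, but the sentence ``reproduces the connection formula of (iii)'' glosses over this exchange and should be made explicit.
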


\section{The foliated structure of a contact metric
$(\kappa,\mu)$-space}\label{primasezione}

Theorem \ref{teoremagreci} implies that any non-Sasakian contact
metric $(\kappa,\mu)$-manifold is endowed with three mutually
orthogonal involutive distributions ${\mathcal D}({\lambda})$,
${\mathcal D}({-\lambda})$ and ${\mathcal D}(0)=\mathbb{R}\xi$,
corresponding to the eigenspaces $\lambda$, $-\lambda$ and $0$ of
the operator $h$, where $\lambda=\sqrt{1-\kappa}$. In particular, as
pointed out in \cite{Mino-Luigia-07},  (${\mathcal D}({\lambda}),
{\mathcal D}({-\lambda}))$ defines a bi-Legendrian structure on
$(M,\eta)$. We started the study of the bi-Legendrian structure of a
contact metric $(\kappa,\mu)$-manifold in \cite{Mino-Luigia-07},
where the explicit expression of the Pang invariant of each Legendre
foliation ${\mathcal D}(\lambda)$ and ${\mathcal D}(-\lambda)$
\begin{gather}
\Pi_{{\mathcal
D}(\lambda)}=\left(2\sqrt{1-\kappa}-\mu+2\right)g|_{{\mathcal
D}(\lambda)\times{\mathcal D}(\lambda)}\label{invariante1}\\
\Pi_{{\mathcal
D}(-\lambda)}=\left(-2\sqrt{1-\kappa}-\mu+2\right)g|_{{\mathcal
D}(-\lambda)\times{\mathcal D}(-\lambda)}\label{invariante2}
\end{gather}
was found (see also \cite{Mino-sub}). It follows that only one among
the following $5$ cases may occur:
\begin{enumerate}
\item[(I)] both ${\mathcal D}(\lambda)$ and  ${\mathcal
D}(-\lambda)$ are positive definite;
\item[(II)] ${\mathcal D}(\lambda)$ is positive definite and  ${\mathcal
D}(-\lambda)$ is negative definite;
\item[(III)] both ${\mathcal D}(\lambda)$ and  ${\mathcal
D}(-\lambda)$ are negative definite;
\item[(IV)]  ${\mathcal D}(\lambda)$ is positive definite and  ${\mathcal
D}(-\lambda)$ is flat;
\item[(V)] ${\mathcal D}(\lambda)$ is flat and ${\mathcal
D}(-\lambda)$ is negative definite.
\end{enumerate}
Moreover, the bi-Legendrian structure $({\mathcal
D}(\lambda),{\mathcal D}(-\lambda))$ belongs to the class (I), (II),
(III), (IV), (V) if and only if $I_M>1$, $-1<I_M<1$, $I_M<-1$,
$I_M=1$, $I_M=-1$, respectively.

Furthermore, the following characterization of contact metric
$(\kappa,\mu)$-manifolds in terms of Legendre foliations holds.

\begin{theorem}[\cite{Mino-Luigia-07}]\label{principale0}
Let $(M,\varphi,\xi,\eta,g)$ be a non-Sasakian contact metric
manifold. Then $(M,\varphi,\xi,\eta,g)$ is a contact metric
$\left(\kappa,\mu\right)$-manifold if and only if it admits two
mutually orthogonal Legendre distributions $L$ and $Q$ and a unique
linear connection $\bar{\nabla}$ satisfying the following
properties:
\begin{enumerate}
\item[{\rm (i)}] $\bar{\nabla}L\subset L$,\quad $\bar{\nabla} Q\subset Q$,
\item[{\rm (ii)}] $\bar{\nabla}\eta=0$,\quad  $\bar{\nabla}d\eta=0$,\quad  $\bar{\nabla}g=0$,\quad  $\bar{\nabla}\varphi=0$,\quad  $\bar{\nabla}h=0$,
\item[{\rm (iii)}] $\bar{T}\left(X,Y\right)=2d\eta\left(X,Y\right){\xi}$ \quad
for  all $X,Y\in\Gamma({\mathcal{D}})$,\\
$\bar{T}(X,\xi)=[\xi,X_{L}]_{Q}+[\xi,X_{Q}]_{L}$ \quad  for all
$X\in\Gamma(TM)$,
\end{enumerate}
where $\bar{T}$ denotes the torsion tensor field of $\bar{\nabla}$
and $X_L$ and $X_Q$ are, respectively, the projections of $X$ onto
the subbundles $L$ and $Q$ of $TM$. Furthermore, $L$ and $Q$ are
integrable and coincide with the eigenspaces ${\mathcal D}(\lambda)$
and ${\mathcal D}(-\lambda)$ of the operator $h$, and $\bar\nabla$
coincides in fact with the bi-Legendrian connection $\nabla^{bl}$
associated to the bi-Legendrian structure $(L,Q)$.
\end{theorem}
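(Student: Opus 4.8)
My plan is to prove the two implications separately: in the necessity part I will exhibit an explicit connection and appeal to a uniqueness result, while in the sufficiency part I will recognize the given $\bar\nabla$ as the bi-Legendrian connection and then read off the curvature. For necessity, assume $(M,\varphi,\xi,\eta,g)$ is a non-Sasakian contact metric $(\kappa,\mu)$-manifold, so $\kappa<1$ and, by Theorem~\ref{teoremagreci}, $M$ carries the mutually orthogonal integrable eigendistributions $\mathcal D(\lambda)$, $\mathcal D(-\lambda)$ of $h$, with $\lambda=\sqrt{1-\kappa}>0$. For $X,Y\in\Gamma(\mathcal D(\lambda))$ one has $\varphi Y\in\mathcal D(-\lambda)\perp X$, so \eqref{acca} gives $d\eta(X,Y)=g(X,\varphi Y)=0$; hence $L:=\mathcal D(\lambda)$ and $Q:=\mathcal D(-\lambda)$ are mutually orthogonal Legendre distributions with $Q=\varphi L$. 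I set $\bar\nabla:=\nabla^{bl}$, the bi-Legendrian connection of $(L,Q)$ furnished by Theorem~\ref{biconnection}, which by construction satisfies (i), (iii) and $\bar\nabla\xi=0$, $\bar\nabla d\eta=0$; since $\bar\nabla$ preserves $\mathcal D=L\oplus Q$ and kills $\xi$, it also kills $\eta$.

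To obtain the remaining relations in (ii) I would invoke Theorem~\ref{lemmarocky}. A direct computation from \eqref{Blair2} shows that for $X,Y\in\Gamma(\mathcal D(\pm\lambda))$ the vector $\nabla_X Y$ remains in $\mathcal D(\pm\lambda)$, i.e.\ the Legendre foliations $L$ and $Q$ are totally geodesic; by the final clause of Theorem~\ref{lemmarocky} this yields $\bar\nabla g=0$ and $\bar\nabla\varphi=0$. Finally $\bar\nabla h=0$ follows because $h$ acts as the constant $\pm\lambda\,\mathrm{id}$ on the $\bar\nabla$-parallel subbundles $L,Q$ and as $0$ on $\mathbb R\xi$. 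For the uniqueness asserted in the statement, note that any connection obeying (i)--(iii) has $\bar\nabla g=0$ and $\bar\nabla\eta=0$; since $\eta=g(\cdot,\xi)$ this forces $\bar\nabla\xi=0$, so that (i), (iii), $\bar\nabla\xi=0$, $\bar\nabla d\eta=0$ are precisely the characterizing properties of $\nabla^{bl}$ in Theorem~\ref{biconnection}, whence $\bar\nabla=\nabla^{bl}$.

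Conversely, suppose $M$ admits mutually orthogonal Legendre distributions $L,Q$ and a connection $\bar\nabla$ obeying (i)--(iii). Since $\varphi L$ and $Q$ are both $n$-dimensional subbundles of the $n$-dimensional $L^{\perp}\cap\mathcal D$, a dimension count gives $Q=\varphi L=L^{\perp}\cap\mathcal D$, so $(L,Q)$ is an almost bi-Legendrian structure with $Q$ the conjugate of $L$. Exactly as above, $\bar\nabla g=0$ and $\bar\nabla\eta=0$ imply $\bar\nabla\xi=0$, so Theorem~\ref{biconnection} gives $\bar\nabla=\nabla^{bl}$ (which is also the required uniqueness). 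The torsion normalization in (iii) together with (i) forces $[X,Y]=\bar\nabla_X Y-\bar\nabla_Y X\in\Gamma(L)$ for $X,Y\in\Gamma(L)$, and similarly for $Q$, so $L$ and $Q$ are integrable; Theorem~\ref{lemmarocky} then shows these foliations are totally geodesic and that $h$ preserves $L$ and $Q$, while $\bar\nabla h=0$ makes $h$ a parallel symmetric operator, hence with constant eigenvalues on $L$ and $Q$.

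The hard part will be to extract the nullity condition \eqref{eq-km} from these data. The plan is to write $\nabla=\bar\nabla-S$, where the difference tensor $S$ is uniquely determined by the prescribed torsion $\bar T$ and by $\bar\nabla g=0$ through the Koszul-type identity $2g(S(X,Y),Z)=g(\bar T(X,Y),Z)-g(\bar T(Y,Z),X)+g(\bar T(Z,X),Y)$. Since $\bar\nabla\xi=0$ gives $\bar R_{XY}\xi=0$, the comparison of curvatures reduces to
\begin{equation*}
R_{XY}\xi=-(\bar\nabla_X S)(Y,\xi)+(\bar\nabla_Y S)(X,\xi)-S(\bar T(X,Y),\xi)+S(X,S(Y,\xi))-S(Y,S(X,\xi)),
\end{equation*}
which expresses $R_{XY}\xi$ purely through $S$, hence through $h$ via \eqref{eq-contact-7} and the explicit form of $\bar T$ in (iii). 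Carrying out this computation is the technical heart of the argument: it should collapse the eigenvalues of $h$ to a single pair $\pm\lambda$ (equivalently $h^2=(1-\kappa)\varphi^2$) and produce exactly $R_{XY}\xi=\kappa(\eta(Y)X-\eta(X)Y)+\mu(\eta(Y)hX-\eta(X)hY)$ with $\kappa=1-\lambda^2$ and a constant $\mu$. Once \eqref{eq-km} holds, Theorem~\ref{teoremagreci} identifies $L$ and $Q$ with the eigendistributions $\mathcal D(\lambda)$ and $\mathcal D(-\lambda)$, and $\bar\nabla$ with their bi-Legendrian connection, completing the proof.
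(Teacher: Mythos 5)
The first thing to note is that this paper never proves Theorem~\ref{principale0}: it is imported verbatim from \cite{Mino-Luigia-07}, so your proposal has to stand entirely on its own. Its ``only if'' half essentially does, but the ``if'' half --- which is the substantive content of the theorem --- is not proved. After your (correct) reductions ($Q=\varphi L=L^{\perp}\cap\mathcal D$, $\bar\nabla\xi=0$ hence $\bar\nabla=\nabla^{bl}$ by Theorem~\ref{biconnection}, integrability of $L$ and $Q$, and via Theorem~\ref{lemmarocky} total geodesicity and $h$-invariance of $L$, $Q$), you introduce the contorsion tensor $S$ and then assert that the resulting expression for $R_{XY}\xi$ ``should collapse the eigenvalues of $h$ to a single pair $\pm\lambda$'' and ``produce exactly'' \eqref{eq-km} with constants. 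That assertion \emph{is} the theorem; no part of the computation is carried out, and it is not routine. Concretely: (a) $S$ is determined by the torsion, but the prescribed torsion $\bar T(X,\xi)=[\xi,X_L]_Q+[\xi,X_Q]_L$ is not yet expressed through $h$ and $\varphi$ --- identities like $[\xi,X]_Q=\frac{1}{2\lambda}(\mathcal L_\xi h)X$ for eigenvectors presuppose exactly the eigenstructure you are trying to establish; (b) nothing in the outline explains why $h|_L$ has a \emph{single} eigenvalue rather than several constant ones; and (c) nothing explains why the coefficients in the curvature identity come out \emph{constant} rather than functions --- this is where $\bar\nabla h=0$, $\bar\nabla g=0$, $\bar\nabla d\eta=0$ must actually be exploited (for instance through parallelism of the Pang invariants $\Pi_L$, $\Pi_Q$, which is the mechanism behind Theorem~\ref{legendre1}). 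As written, the ``conversely'' paragraph is a plan, not a proof.

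Two further points on the ``only if'' half. First, property (iii) of the statement does not hold ``by construction'': Theorem~\ref{biconnection} prescribes the torsion of $\nabla^{bl}$ only on \emph{mixed} pairs $X\in\Gamma(L)$, $Y\in\Gamma(Q)$, whereas the statement requires $\bar T(X,Y)=2d\eta(X,Y)\xi$ for all $X,Y\in\Gamma(\mathcal D)$; for $X,Y\in\Gamma(L)$ one must still prove $\bar T(X,Y)=0$, which needs the integrability of $L$ together with $\bar\nabla d\eta=0$ (pair $\bar T(X,Y)\in\Gamma(L)$ against sections of $Q$ via $d\eta$ and use $d(d\eta)=0$). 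Second, your appeal to \eqref{Blair2} for total geodesicity of the eigenfoliations is correct in substance (it is a result of \cite{BKP-95}), but the computation fails if one uses \eqref{Blair2} as printed in this paper: that formula is missing a factor $\eta(X)$ in its last term, which should read $-\mu\,\eta(X)\varphi hY$, as one checks by comparing with \eqref{Blair3} through $(\nabla_X\varphi h)Y=(\nabla_X\varphi)hY+\varphi((\nabla_X h)Y)$. With the printed version one gets, for $X,Y\in\Gamma(\mathcal D(\lambda))$, a spurious $\mathcal D(-\lambda)$-component $-\frac{\mu}{2}\varphi Y$ in $\nabla_X Y$, and total geodesicity would be false for $\mu\neq 0$.
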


In particular, from \eqref{invariante1}--\eqref{invariante2} it
follows that $\nabla^{bl}\Pi_{{\mathcal
D}(\lambda)}=\nabla^{bl}\Pi_{{\mathcal D}(-\lambda)}=0$. \
Conversely one has the following theorem.

\begin{theorem}[\cite{Mino-sub}]\label{legendre1}
Let $(M,\eta)$ be a contact manifold endowed with a  bi-Legendrian
structure $({\mathcal F}_1,{\mathcal F}_2)$ such that
$\nabla^{bl}\Pi_{{\mathcal F}_1}=\nabla^{bl}\Pi_{{\mathcal F}_2}=0$.
Assume that one of the following conditions holds
\begin{itemize}
  \item[(I)] ${\mathcal F}_1$ and ${\mathcal F}_2$ are positive
definite and there exist two positive
 numbers $a$ and $b$ such that $\overline\Pi_{{\mathcal
F}_1}=ab\overline\Pi_{{\mathcal F}_2}$ on $T{\mathcal F}_1$ and
$\overline\Pi_{{\mathcal F}_2}=ab\overline\Pi_{{\mathcal F}_1}$ on
$T{\mathcal F}_2$,
  \item[(II)] ${\mathcal F}_1$ is positive definite and ${\mathcal F}_2$
is negative definite and there exist $a>0$ and $b<0$ such that
$\overline\Pi_{{\mathcal F}_1}=ab\overline\Pi_{{\mathcal F}_2}$ on
$T{\mathcal F}_1$ and $\overline\Pi_{{\mathcal
F}_2}=ab\overline\Pi_{{\mathcal F}_1}$ on $T{\mathcal F}_2$,
  \item[(III)]  ${\mathcal F}_1$ and ${\mathcal F}_2$ are negative
definite and there exist two negative
 numbers $a$ and $b$ such that $\overline\Pi_{{\mathcal
F}_1}=ab\overline\Pi_{{\mathcal F}_2}$ on $T{\mathcal F}_1$ and
$\overline\Pi_{{\mathcal F}_2}=ab\overline\Pi_{{\mathcal F}_1}$ on
$T{\mathcal F}_2$.
\end{itemize}
Then $(M,\eta)$ admits a  compatible contact metric structure
$(\varphi,\xi,\eta,g)$ such that
\begin{enumerate}
  \item[(i)] if $a=b$, $(M,\varphi,\xi,\eta,g)$ is a Sasakian
manifold;
  \item[(ii)] if $a\neq b$, $(M,\varphi,\xi,\eta,g)$ is a contact metric
$(\kappa,\mu)$-manifold, whose  associated bi-Legendrian structure
is $({\mathcal F}_1,{\mathcal F}_2)$, where
\begin{equation}\label{costanti0}
\kappa=1-\frac{(a-b)^2}{16}, \ \ \ \mu=2-\frac{a+b}{2}.
\end{equation}
\end{enumerate}
\end{theorem}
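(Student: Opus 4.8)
The plan is to construct the compatible contact metric structure $(\varphi,\xi,\eta,g)$ by hand out of the two Legendre distributions, their Pang invariants and Libermann operators, and then to recognize it, via Theorem \ref{principale0}, as a contact metric $(\kappa,\mu)$-structure; the statement is essentially the converse of that theorem. Throughout write $L_1=T{\mathcal F}_1$ and $L_2=T{\mathcal F}_2$, so that $TM=L_1\oplus L_2\oplus{\mathbb R}\xi$, and observe that in each of the cases (I)--(III) the matching of signs of $a$, $b$ with the definiteness of $\Pi_{{\mathcal F}_1}$, $\Pi_{{\mathcal F}_2}$ makes both $\frac1a\Pi_{{\mathcal F}_1}$ and $\frac1b\Pi_{{\mathcal F}_2}$ positive definite.

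First I would define the metric $g$ by declaring $L_1$, $L_2$ and ${\mathbb R}\xi$ mutually orthogonal, $g(\xi,\xi)=1$, and $g|_{L_1}:=\frac1a\Pi_{{\mathcal F}_1}$, $g|_{L_2}:=\frac1b\Pi_{{\mathcal F}_2}$; by the remark above $g$ is Riemannian and $\eta=g(\cdot,\xi)$. Next I would define $\varphi$ by $\varphi\xi=0$ and, using the Libermann operators $\Lambda_{{\mathcal F}_1},\Lambda_{{\mathcal F}_2}$ of \eqref{lambda}, by $\varphi|_{L_2}:=-a\Lambda_{{\mathcal F}_1}$ and $\varphi|_{L_1}:=-b\Lambda_{{\mathcal F}_2}$; these are the unique choices forcing $d\eta(X,Y)=g(X,\varphi Y)$, as one sees by pairing \eqref{lambda} against $\Pi_{{\mathcal F}_i}$ and invoking non-degeneracy. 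The content of the axiom $\varphi^2=-I+\eta\otimes\xi$ is then exactly the proportionality hypothesis: for $Y\in L_1$ one computes $\varphi^2 Y=ab\,\Lambda_{{\mathcal F}_1}\Lambda_{{\mathcal F}_2}Y$, and feeding \eqref{lambda} in twice turns $\overline\Pi_{{\mathcal F}_1}=ab\,\overline\Pi_{{\mathcal F}_2}$ on $L_1$ into $ab\,\Lambda_{{\mathcal F}_1}\Lambda_{{\mathcal F}_2}=-I$ on $L_1$, whence $\varphi^2Y=-Y$; the symmetric computation on $L_2$ uses the other proportionality. A short check, again reducing to the proportionality, gives the compatibility $g(\varphi X,\varphi Y)=g(X,Y)-\eta(X)\eta(Y)$, so that \eqref{acca} holds and $(\varphi,\xi,\eta,g)$ is a contact metric structure whose conjugate pair of Legendre distributions is precisely $(L_1,L_2)$, since $\varphi L_1=L_2$.

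I would then bring in the bi-Legendrian connection $\nabla^{bl}$ of $(L_1,L_2)$ from Theorem \ref{biconnection}, which already satisfies $\nabla^{bl}L_i\subset L_i$, $\nabla^{bl}\xi=0$, $\nabla^{bl}d\eta=0$ and the required torsion conditions. As $\nabla^{bl}$ preserves the splitting $L_1\oplus L_2\oplus{\mathbb R}\xi$, the hypotheses $\nabla^{bl}\Pi_{{\mathcal F}_1}=\nabla^{bl}\Pi_{{\mathcal F}_2}=0$ immediately give $\nabla^{bl}g=0$, and then the equivalences of Theorem \ref{lemmarocky} force $\nabla^{bl}\varphi=0$ and that $h$ maps $L_1$ onto $L_1$ and $L_2$ onto $L_2$. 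To pin down $h$ I would use the Libermann identity $\Lambda_{{\mathcal F}_i}[\xi,X]=\frac12 X$: for $X\in L_1$ it yields $[\xi,X]_{L_2}=\frac a2\varphi X$ and for $X\in L_2$ it yields $[\xi,X]_{L_1}=\frac b2\varphi X$; substituting these into $2hX=[\xi,\varphi X]-\varphi[\xi,X]$ and projecting onto the leaves gives $h|_{L_1}=\frac{a-b}{4}\,\mathrm{id}$ and $h|_{L_2}=-\frac{a-b}{4}\,\mathrm{id}$. Being a constant multiple of the identity on each $\nabla^{bl}$-parallel subbundle, $h$ is $\nabla^{bl}$-parallel, i.e. $\nabla^{bl}h=0$.

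The theorem now follows by bookkeeping. If $a\neq b$ then $h\neq 0$, the structure is non-Sasakian, and $\nabla^{bl}$ satisfies all of (i)--(iii) of Theorem \ref{principale0}, so $(M,\varphi,\xi,\eta,g)$ is a contact metric $(\kappa,\mu)$-manifold whose associated bi-Legendrian structure is $(L_1,L_2)$; the constants are read off from \eqref{invariante1}--\eqref{invariante2}, since $g|_{L_1}=\frac1a\Pi_{{\mathcal F}_1}$ and $g|_{L_2}=\frac1b\Pi_{{\mathcal F}_2}$ with $\lambda=\sqrt{1-\kappa}$ force $2\sqrt{1-\kappa}-\mu+2=a$ and $-2\sqrt{1-\kappa}-\mu+2=b$, which solve to $\kappa=1-\frac{(a-b)^2}{16}$ and $\mu=2-\frac{a+b}{2}$, as in \eqref{costanti0}. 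If $a=b$ then $h=0$, the structure is $K$-contact, and since Theorem \ref{principale0} does not apply directly one instead verifies normality $N_\varphi=0$, equivalently $(\nabla_X\varphi)Y=g(X,Y)\xi-\eta(Y)X$, by expressing the Levi Civita connection through $\nabla^{bl}$ and using $\nabla^{bl}\varphi=0$ with $h=0$, so that $(M,\varphi,\xi,\eta,g)$ is Sasakian. The main obstacle I expect is the verification of the two structural identities $\varphi^2=-I+\eta\otimes\xi$ and $h|_{L_1}=\frac{a-b}{4}\,\mathrm{id}$: both rest on careful manipulation of \eqref{lambda} and of the Libermann identity, and it is exactly here that the assumptions $\overline\Pi_{{\mathcal F}_1}=ab\,\overline\Pi_{{\mathcal F}_2}$ enter in an essential way.
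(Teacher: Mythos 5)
This statement is one the paper imports from \cite{Mino-sub} and never proves, so there is no internal proof to compare you against; I can only judge your argument on its own terms. Your strategy (build $(\varphi,\xi,\eta,g)$ from the Pang invariants and Libermann operators, then feed the bi-Legendrian connection into Theorem \ref{principale0}) is the natural one, and the core computations do check out: in each of the cases (I)--(III) the metric defined by $g|_{T{\mathcal F}_1}=\frac1a\Pi_{{\mathcal F}_1}$, $g|_{T{\mathcal F}_2}=\frac1b\Pi_{{\mathcal F}_2}$ with orthogonal splitting and $g(\xi,\xi)=1$ is Riemannian; feeding \eqref{lambda} in twice really does convert the proportionality hypotheses into $ab\,\Lambda_{{\mathcal F}_1}\Lambda_{{\mathcal F}_2}=-I$ on $T{\mathcal F}_1$ and $ab\,\Lambda_{{\mathcal F}_2}\Lambda_{{\mathcal F}_1}=-I$ on $T{\mathcal F}_2$, which gives $\varphi^2=-I+\eta\otimes\xi$ and the compatibility of $g$; the Libermann identity $\Lambda_{{\mathcal F}_i}[\xi,X]=\frac12X$ then yields $[\xi,X]_{T{\mathcal F}_2}=\frac a2\varphi X$ and $[\xi,Y]_{T{\mathcal F}_1}=\frac b2\varphi Y$, hence (using Theorem \ref{lemmarocky}(iii) to know that $h$ preserves the two distributions) $h=\frac{a-b}4\,\mathrm{id}$ on $T{\mathcal F}_1$ and $h=-\frac{a-b}4\,\mathrm{id}$ on $T{\mathcal F}_2$; and the bookkeeping via \eqref{invariante1}--\eqref{invariante2} gives exactly \eqref{costanti0}.

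There are, however, two genuine gaps. The first concerns the torsion. Theorem \ref{principale0}(iii) demands $\bar{T}(X,Y)=2d\eta(X,Y)\xi$ for \emph{all} $X,Y\in\Gamma({\mathcal D})$, whereas Theorem \ref{biconnection}(iii) guarantees this only for mixed pairs $X\in\Gamma(L_1)$, $Y\in\Gamma(L_2)$; when you write that $\nabla^{bl}$ ``already satisfies \dots the required torsion conditions'' you are silently upgrading the latter to the former. What is missing is precisely the vanishing of $T^{bl}$ on $T{\mathcal F}_1\times T{\mathcal F}_1$ and $T{\mathcal F}_2\times T{\mathcal F}_2$, and this is \emph{not} a formal consequence of the axioms listed in Theorem \ref{biconnection} (those axioms do not constrain the torsion on same-distribution pairs at all; only uniqueness of the connection pins it down). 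The fact is true --- for the connection actually constructed in \cite{Mino-05} the $T{\mathcal F}_1$-component of $T^{bl}(X,X')$ vanishes identically and the $T{\mathcal F}_2$-component equals $-[X,X']_{T{\mathcal F}_2}$, which is killed by the integrability of ${\mathcal F}_1$ --- but it must be cited from \cite{Mino-05} or proven separately; note that this is exactly the point where the integrability of the bi-Legendrian structure enters, so it cannot be waved through. The second gap is the Sasakian case $a=b$: here you have a plan, not a proof. The paper quotes no formula ``expressing the Levi Civita connection through $\nabla^{bl}$'', so your final step has nothing to stand on as written. A workable substitute is the technique the paper itself uses in Section 5: $h=0$ gives the $K$-contact condition, and then $N_{\varphi}=0$ follows by computing the components of $N_{\varphi}$ via Theorem \ref{lemmarocky}(iii) and \eqref{formulenijenhuis1}--\eqref{formulenijenhuis2}, together with --- once again --- the same full torsion identity $T^{bl}(X,X')=2d\eta(X,X')\xi$ on each distribution. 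So both loose ends reduce to one missing lemma on the torsion of the bi-Legendrian connection of an integrable pair; with that lemma supplied, your argument closes up into a complete proof.
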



\section{The canonical paracontact structure of a contact metric $(\kappa,\mu)$-space}

In \cite{Mino-08} the interplays between paracontact geometry and
the theory of bi-Legendrian structures have been studied. More
precisely it has been proven the existence of a biunivocal
correspondence $\Psi:{\mathcal{AB}}\longrightarrow{\mathcal{PM}}$
 between the set ${\mathcal{AB}}$ of  almost bi-Legendrian structures
 and the set of paracontact metric structures $\mathcal{PM}$ on the same contact manifold
$(M,\eta)$. This bijection maps bi-Legendrian structures onto
integrable paracontact structures, flat almost bi-Legendrian
structures onto \emph{K}-paracontact structures and flat
bi-Legendrian structures onto para-Sasakian structures. For the
convenience of the reader we recall more explicitly how the above
biunivocal correspondence is defined. If $(L_1,L_2)$ is an almost
bi-Legendrian structure on $(M,\eta)$, the corresponding paracontact
metric structure $(\tilde\varphi,\xi,\eta,\tilde g)=\Psi(L_1,L_2)$
is given by
\begin{equation}\label{biunivoca}
\tilde\varphi|_{L_1}=I, \ \tilde\varphi|_{L_2}=-I, \
\tilde\varphi\xi=0, \ \ \tilde g:=d\eta(\cdot,\tilde\varphi
\cdot)+\eta\otimes\eta.
\end{equation}
Moreover, the relationship between the bi-Legendrian and the
canonical paracontact connections has been investigated, proving
that in the integrable case they in fact coincide:

\begin{theorem}[\cite{Mino-08}]\label{connection}
Let  $(M,\eta,L_1,L_2)$  be  an  almost  bi-Legendrian  manifold and
 let $(\tilde\varphi,\xi,\eta,\tilde g)=\Psi(L_1,L_2)$ be the paracontact metric
structure induced on $M$ by \eqref{biunivoca}. Let $\nabla^{bl}$ and
$\tilde\nabla^{pc}$ be the corresponding bi-Legendrian and canonical
paracontact connections. Then
\begin{enumerate}
  \item[(a)] $\nabla^{bl}\tilde\varphi=0$, $\nabla^{bl}\tilde g=0$,
  \item[(b)] the bi-Legendrian and the canonical paracontact connections coincide if and only if
  the induced paracontact metric structure is integrable.
\end{enumerate}
\end{theorem}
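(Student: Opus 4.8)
The plan is to exploit the defining properties of the two connections, together with the characterizations in Theorems~\ref{biconnection}, \ref{paratanaka} and \ref{integrability}. For part (a), the key observation is that $\nabla^{bl}$ preserves the splitting $TM=L_1\oplus L_2\oplus\mathbb{R}\xi$: properties (i)--(ii) of Theorem~\ref{biconnection} give $\nabla^{bl}_X Z\in\Gamma(L_1)$ for $Z\in\Gamma(L_1)$, $\nabla^{bl}_X Z\in\Gamma(L_2)$ for $Z\in\Gamma(L_2)$, and $\nabla^{bl}\xi=0$. Writing $Y=Y_{L_1}+Y_{L_2}+\eta(Y)\xi$ and recalling from \eqref{biunivoca} that $\tilde\varphi$ acts as $+I$, $-I$, $0$ on the three summands, a direct computation of $(\nabla^{bl}_X\tilde\varphi)Y=\nabla^{bl}_X(\tilde\varphi Y)-\tilde\varphi(\nabla^{bl}_X Y)$ shows that every term cancels, so $\nabla^{bl}\tilde\varphi=0$. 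The same decomposition shows that the $\xi$-component of $\nabla^{bl}_X Y$ equals $X(\eta(Y))$, whence $\nabla^{bl}\eta=0$. Since $\tilde g=d\eta(\cdot,\tilde\varphi\cdot)+\eta\otimes\eta$ and $\nabla^{bl}$ annihilates $d\eta$ (by (ii)), $\tilde\varphi$ and $\eta$, the Leibniz rule gives $\nabla^{bl}\tilde g=0$.

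For part (b), the implication that coincidence of the connections forces integrability is immediate: if $\nabla^{bl}=\tilde\nabla^{pc}$ then $\tilde\nabla^{pc}\tilde\varphi=\nabla^{bl}\tilde\varphi=0$ by (a), so $(\tilde\varphi,\xi,\eta,\tilde g)$ is integrable by Theorem~\ref{integrability}. For the converse I would verify that, when the structure is integrable, $\tilde\nabla^{pc}$ satisfies the three properties (i)--(iii) that characterize the bi-Legendrian connection in Theorem~\ref{biconnection}, and then conclude $\tilde\nabla^{pc}=\nabla^{bl}$ by the uniqueness clause of that theorem. Property (i) holds because integrability gives $\tilde\nabla^{pc}\tilde\varphi=0$ (Theorem~\ref{integrability}), so $\tilde\nabla^{pc}$ preserves the $\pm1$-eigendistributions $L_1,L_2$ of $\tilde\varphi$; property (ii) holds since $\tilde\nabla^{pc}\xi=0$ is built into Theorem~\ref{paratanaka}, and the paracontact metric identity $d\eta(\cdot,\cdot)=\tilde g(\cdot,\tilde\varphi\cdot)$ together with $\tilde\nabla^{pc}\tilde g=0$ and $\tilde\nabla^{pc}\tilde\varphi=0$ yields $\tilde\nabla^{pc}d\eta=0$.

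The main obstacle, and the only genuinely computational step, is checking property (iii) for $\tilde T^{pc}$, for which I would use the explicit torsion formula~\eqref{paratorsion}. For $X\in\Gamma(L_1)$, $Y\in\Gamma(L_2)$ the first two terms vanish, since $\eta$ kills $L_1$ and $L_2$, and the paracontact metric identity gives $2\tilde g(X,\tilde\varphi Y)\xi=2d\eta(X,Y)\xi$, which is the first half of (iii). For the second half one gets $\tilde T^{pc}(X,\xi)=-\tilde\varphi\tilde h X$ from \eqref{paratorsion} using $\tilde h\xi=0$ and $\tilde\varphi\xi=0$, so it remains to identify $\tilde h=\frac{1}{2}\mathcal L_\xi\tilde\varphi$ on the eigendistributions. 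Using $(\mathcal L_\xi\tilde\varphi)X=[\xi,\tilde\varphi X]-\tilde\varphi[\xi,X]$ and the fact that $\xi$ preserves $\mathcal D$, a short bracket computation gives $\tilde h X=[\xi,X]_{L_2}$ for $X\in\Gamma(L_1)$ and $\tilde h X=-[\xi,X]_{L_1}$ for $X\in\Gamma(L_2)$; applying $-\tilde\varphi$ then produces exactly $[\xi,X_{L_1}]_{L_2}+[\xi,X_{L_2}]_{L_1}$, i.e. the required expression for $\tilde T^{pc}(X,\xi)$. With (i)--(iii) verified, the uniqueness of the bi-Legendrian connection finishes the proof. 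I expect the bracket identification of $\tilde h$ to be where all the care is needed, since it is precisely there that the interplay between $\tilde\varphi$, the Lie derivative along $\xi$ and the bi-Legendrian splitting enters.
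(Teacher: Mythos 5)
Your proposal cannot be compared against an internal argument, because the paper never proves Theorem~\ref{connection}: it is quoted from \cite{Mino-08} and used as a black box. Judged on its own, your proof is correct and complete, and it uses exactly the tools the paper makes available. Part (a) is right: properties (i)--(ii) of Theorem~\ref{biconnection} show $\nabla^{bl}$ preserves the splitting $TM=L_1\oplus L_2\oplus\mathbb{R}\xi$, on which $\tilde\varphi$ acts by $+I$, $-I$, $0$, so $(\nabla^{bl}_X\tilde\varphi)Y=0$ term by term; the $\xi$-component argument gives $\nabla^{bl}\eta=0$, and then $\nabla^{bl}\tilde g=0$ follows by Leibniz from $\nabla^{bl}d\eta=0$. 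In part (b) the forward implication is indeed immediate from (a) and Theorem~\ref{integrability}, and your converse strategy --- verify that $\tilde\nabla^{pc}$ satisfies (i)--(iii) of Theorem~\ref{biconnection} and invoke uniqueness --- is the natural one. The two places where care is needed both check out: first, the $\pm 1$-eigenspaces of $\tilde\varphi$ on all of $TM$ are exactly $L_1$ and $L_2$ (no spurious $\xi$-component can appear, since $\tilde\varphi v=\pm v$ forces $\eta(v)=0$), so $\tilde\nabla^{pc}\tilde\varphi=0$ really does give property (i); second, your bracket identification of $\tilde h$ is correct: for $X\in\Gamma(L_1)$ one has $2\tilde hX=[\xi,X]-\tilde\varphi[\xi,X]=2[\xi,X]_{L_2}$, using that $[\xi,X]\in\Gamma(\mathcal D)$ because $\xi$ is an infinitesimal automorphism of the contact distribution (a fact recalled in the paper's preliminaries), and similarly $\tilde hX=-[\xi,X]_{L_1}$ on $L_2$; applying $-\tilde\varphi$ then yields $\tilde T^{pc}(X,\xi)=-\tilde\varphi\tilde hX=[\xi,X_{L_1}]_{L_2}+[\xi,X_{L_2}]_{L_1}$, matching Theorem~\ref{biconnection}(iii). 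One cosmetic point: in \eqref{paratorsion} the paper writes $2g(X,\tilde\varphi Y)\xi$, which must be read as $2\tilde g(X,\tilde\varphi Y)\xi$; you implicitly made this (correct) reading. It is also worth noting, as your proof makes visible, that integrability is used only to secure property (i) and $\tilde\nabla^{pc}d\eta=0$; this localizes precisely where the two connections can differ in the non-integrable case.
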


As we have stressed in $\S$ \ref{primasezione}, any (non-Sasakian)
contact metric $(\kappa,\mu)$-manifold $(M,\varphi,\xi,\eta,g)$
carries a canonical bi-Legendrian structure $({\mathcal
D}(\lambda),{\mathcal D}(-\lambda))$ which, in some sense,
completely characterizes the contact metric $(\kappa,\mu)$-structure
itself. Then we present  the following definition.

\begin{definition}
The paracontact metric structure $(\tilde\varphi,\xi,\eta,\tilde
g):=\Psi({\mathcal D}(\lambda),{\mathcal D}(-\lambda))$ is said to
be the \emph{canonical paracontact metric structure} of the
(non-Sasakian) contact metric $(\kappa,\mu)$-space
$(M,\varphi,\xi,\eta,g)$.
\end{definition}

In this section we deal with the study of the canonical paracontact
metric structure of a contact metric $(\kappa,\mu)$-space. The first
remark is that, since ${\mathcal D}(\lambda)$ and ${\mathcal
D}(-\lambda)$ are involutive, $(\tilde\varphi,\xi,\eta,\tilde g)$ is
integrable so that, by Theorem \ref{connection}, the connection
stated in Theorem \ref{principale0} and the canonical paracontact
connection of $(\tilde\varphi,\xi,\eta,\tilde g)$ coincide.

Now we show a more explicit expression for the canonical
paracontact metric structure which will turn useful in the sequel.

\begin{theorem}\label{principale1}
Let $(M,\varphi,\xi,\eta,g)$ be a non-Sasakian contact metric
$(\kappa,\mu)$-space. Then the canonical paracontact metric
structure $(\tilde\varphi,\xi,\eta,\tilde g)$ of $M$ is given by
\begin{equation}\label{paracanonical}
\tilde\varphi:=\frac{1}{\sqrt{1-\kappa}}h, \ \ \  \tilde
g:=\frac{1}{\sqrt{1-\kappa}}d\eta(\cdot,h\cdot)+\eta\otimes\eta.
\end{equation}
\end{theorem}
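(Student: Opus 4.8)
The plan is to verify directly that the two $(1,1)$-tensor fields coincide on each summand of the splitting $TM=\mathcal{D}(\lambda)\oplus\mathcal{D}(-\lambda)\oplus\mathbb{R}\xi$ supplied by Theorem \ref{teoremagreci}, and then to read off the metric from the defining relation of $\Psi$. Recall that, by \eqref{biunivoca}, the canonical paracontact structure $(\tilde\varphi,\xi,\eta,\tilde g)=\Psi(\mathcal{D}(\lambda),\mathcal{D}(-\lambda))$ is characterized by $\tilde\varphi|_{\mathcal{D}(\lambda)}=I$, $\tilde\varphi|_{\mathcal{D}(-\lambda)}=-I$, $\tilde\varphi\xi=0$, together with $\tilde g=d\eta(\cdot,\tilde\varphi\cdot)+\eta\otimes\eta$. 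Thus everything reduces to identifying $\frac{1}{\sqrt{1-\kappa}}h$ with this $\tilde\varphi$.

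First I would invoke Theorem \ref{teoremagreci}, according to which, in the non-Sasakian case, $\mathcal{D}(\lambda)$ and $\mathcal{D}(-\lambda)$ are exactly the eigenspaces of the symmetric operator $h$ for the eigenvalues $\lambda=\sqrt{1-\kappa}$ and $-\lambda=-\sqrt{1-\kappa}$. Consequently, for $X\in\Gamma(\mathcal{D}(\lambda))$ one has $hX=\sqrt{1-\kappa}\,X$, whence $\frac{1}{\sqrt{1-\kappa}}hX=X=\tilde\varphi X$; symmetrically, for $X\in\Gamma(\mathcal{D}(-\lambda))$ one gets $\frac{1}{\sqrt{1-\kappa}}hX=-X=\tilde\varphi X$; and by \eqref{eq-contact-7} we have $h\xi=0$, so $\frac{1}{\sqrt{1-\kappa}}h\xi=0=\tilde\varphi\xi$. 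Since these three subbundles span $TM$ and both tensors act fibrewise $\mathbb{R}$-linearly, this establishes $\tilde\varphi=\frac{1}{\sqrt{1-\kappa}}h$.

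The expression for the metric is then immediate: substituting $\tilde\varphi=\frac{1}{\sqrt{1-\kappa}}h$ into $\tilde g=d\eta(\cdot,\tilde\varphi\cdot)+\eta\otimes\eta$ and using the $\mathbb{R}$-bilinearity of $d\eta$ yields $\tilde g=\frac{1}{\sqrt{1-\kappa}}d\eta(\cdot,h\cdot)+\eta\otimes\eta$, as asserted.

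There is in fact no genuinely hard step here: the statement amounts to unwinding the definition of the correspondence $\Psi$, and the only ingredient needed beyond \eqref{biunivoca} is the eigenvalue description of $h$ from Theorem \ref{teoremagreci}. If one wanted a self-contained argument, the single point worth recording is that $\frac{1}{\sqrt{1-\kappa}}h$ does satisfy $\tilde\varphi^2=I-\eta\otimes\xi$ and restricts to an almost paracomplex structure on $\mathcal{D}$; but this is automatic, since $\Psi$ is already known to output a bona fide paracontact metric structure (\cite{Mino-08}), and it can anyway be read off directly from the eigenvalue computation above.
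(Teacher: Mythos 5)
Your proof is correct, but it inverts the structure of the paper's own argument, and the inversion is worth noting. The paper proves from scratch that the pair \eqref{paracanonical} satisfies all the axioms of a paracontact metric structure: it uses $h^2=(\kappa-1)\varphi^2$ to get $\tilde\varphi^2=I-\eta\otimes\xi$, notes that the eigendistributions ${\mathcal D}(\pm\lambda)$ provide the required almost paracomplex structure on $\mathcal D$, and then checks by hand (via the symmetry of $\varphi h$ and \eqref{eq-contact-7}) that $\tilde g$ is symmetric, compatible, and satisfies $d\eta(X,Y)=\tilde g(X,\tilde\varphi Y)$; only in the closing sentence does it observe that this structure coincides with $\Psi({\mathcal D}(\lambda),{\mathcal D}(-\lambda))$ ``as \eqref{biunivoca} shows''. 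You take that closing observation as the entire proof: since $\Psi$ is already known from \cite{Mino-08} to output a genuine paracontact metric structure, the theorem reduces to identifying $\frac{1}{\sqrt{1-\kappa}}h$ with the tensor characterized by \eqref{biunivoca}, which your eigenspace-by-eigenspace computation does --- and indeed your treatment of this identification is more explicit than the paper's one-line appeal to \eqref{biunivoca}. Your route is logically minimal and arguably cleaner. What the paper's longer verification buys is a collection of explicit identities, such as $\tilde g(X,Y)=\frac{1}{\sqrt{1-\kappa}}g(X,\varphi hY)+\eta(X)\eta(Y)$ and $\tilde g(X,\tilde\varphi Y)=d\eta(X,Y)$, which are quoted repeatedly in the subsequent computations (Proposition \ref{levicivita1}, Lemma \ref{lemmamino}); the redundancy there is deliberate bookkeeping rather than logical necessity. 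One point tacit in your argument that deserves a word: the non-Sasakian hypothesis gives $\kappa<1$ by Theorem \ref{teoremagreci}, so $\sqrt{1-\kappa}$ is a positive real number and the normalization $\frac{1}{\sqrt{1-\kappa}}h$ makes sense; this is exactly where the hypothesis enters.
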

\begin{proof}
It is well known that in any contact metric $(\kappa,\mu)$-manifold
one has $h^2=(\kappa-1)\varphi^2$ (\cite{BKP-95}). From this
relation it follows that the tensor field
$\tilde\varphi:=\frac{1}{\sqrt{1-\kappa}}h$ satisfies
$\tilde\varphi^2=\frac{1}{1-\kappa}h^2=-\varphi^2=I-\eta\otimes\xi$.
Moreover, $\tilde\varphi$ induces an almost paracomplex structure on
the subbundle $\mathcal D$, given  by the $n$-dimensional
distributions ${\mathcal D}(\lambda)$ and ${\mathcal D}(-\lambda)$.
Thus $\tilde\varphi$ defines an almost paracontact structure on $M$.
Next, we define a compatible metric $\tilde g$ by setting
\begin{equation}\label{metricacanonica1}
\tilde g(X,Y):=d\eta(X,\tilde\varphi Y)+\eta(X)\eta(Y)
\end{equation}
for all $X,Y\in\Gamma(TM)$.  In fact, by using \eqref{eq-contact-7},
we have, for any $X,Y\in\Gamma(TM)$,
\begin{gather*}
\tilde g(Y,X)=\frac{1}{\sqrt{1-\kappa}}d\eta(Y,hX)+\eta(Y)\eta(X)=\frac{1}{\sqrt{1-\kappa}}g(Y,\varphi hX)+\eta(Y)\eta(X)\\
=\frac{1}{\sqrt{1-\kappa}}g(X,\varphi h
Y)+\eta(X)\eta(Y)=d\eta(X,\tilde\varphi Y)+\eta(X)\eta(Y)=\tilde
g(X,Y),
\end{gather*}
thus $\tilde g$ defines a semi-Riemannian metric. Moreover,
$g(\tilde\varphi X,\tilde\varphi Y)=d\eta(\tilde\varphi
X,Y-\eta(Y)\xi)+\eta(\tilde\varphi X)\eta(\tilde\varphi
Y)=d\eta(\tilde\varphi X,Y)=-\tilde g(X,Y)+\eta(X)\eta(Y)$ and
$g(X,\tilde\varphi Y)=d\eta(X,\tilde\varphi^2
Y)+\eta(X)\eta(\tilde\varphi Y)=d\eta(X,Y-\eta(Y)\xi)=d\eta(X,Y)$
for all $X,Y\in\Gamma(TM)$. Hence $(\tilde\varphi,\xi,\eta,\tilde
g)$ is a paracontact metric structure. Finally,  the paracontact
metric structure defined by \eqref{paracanonical} coincides with the
canonical paracontact metric structure of the contact metric
$(\kappa,\mu)$-space $(M,\varphi,\xi,\eta,g)$ as \eqref{biunivoca}
shows.
\end{proof}

The relationship between the Levi Civita connections of $(M,g)$
and $(M,\tilde g)$ is given in the following proposition.

\begin{proposition}\label{levicivita1}
Under the same hypotheses and notation of Theorem 3.4 we have the
following relationship between the Levi Civita connections
$\nabla$, $\tilde \nabla$ of $g$ and $\tilde g$, respectively:
\begin{align*}
\tilde \nabla_X Y&=\nabla_X Y + \frac \mu 2 \bigl(\eta (X) \varphi Y
+\eta (Y) \varphi X  \bigr) - \frac 1 {\sqrt {1-\kappa}} \bigl(
\eta(X) h Y+\eta(Y) h X   \bigr)\\
 &\quad + \frac 12\left(\frac {2-\mu}
{\sqrt {1-\kappa}} g(h X,Y)-2 {\sqrt{1-\kappa}}
g(\varphi^2X,Y)-2g(X,\varphi Y)+2X(\eta (Y)) - \eta(\nabla_XY)
\right)\xi.
\end{align*}
\end{proposition}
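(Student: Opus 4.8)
The plan is to compute $\tilde\nabla_X Y$ directly from the two Levi Civita connections by using the Koszul formula together with the explicit form of the canonical paracontact metric $\tilde g$ given in Theorem \ref{principale1}, namely $\tilde g=\frac{1}{\sqrt{1-\kappa}}d\eta(\cdot,h\cdot)+\eta\otimes\eta$. The most efficient route is to write $\tilde\nabla_X Y=\nabla_X Y + D(X,Y)$ for an unknown $(1,2)$-tensor (difference) $D$ which is symmetric in $X,Y$ (since both connections are torsion-free) and which is characterized by the metricity condition $\tilde\nabla\tilde g=0$. That is, $D$ must satisfy
\begin{equation*}
\tilde g(D(X,Y),Z)+\tilde g(Y,D(X,Z))=(\nabla_X\tilde g)(Y,Z),
\end{equation*}
so the problem reduces to computing $(\nabla_X\tilde g)(Y,Z)$ and then solving for $D$ via the standard symmetrization $2\tilde g(D(X,Y),Z)=(\nabla_X\tilde g)(Y,Z)+(\nabla_Y\tilde g)(X,Z)-(\nabla_Z\tilde g)(X,Y)$.

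First I would compute $\nabla_X\tilde g$. Since $\tilde g=\frac{1}{\sqrt{1-\kappa}}d\eta(\cdot,h\cdot)+\eta\otimes\eta$ and $\nabla$ is the Levi Civita connection of $g$, I can expand $(\nabla_X\tilde g)(Y,Z)$ using $\nabla d\eta$, $\nabla h$ and $\nabla\eta$. The key inputs here are the Blair formulas: $\nabla\eta=g(\nabla\xi,\cdot)$ with $\nabla_X\xi=-\varphi X-\varphi h X$ from \eqref{eq-contact-7}, the covariant derivative $(\nabla_X\varphi)Y$ from \eqref{Blair1}, and especially $(\nabla_X h)Y$ from \eqref{Blair2}. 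I would rewrite $d\eta(Y,hZ)=g(Y,\varphi h Z)$ so that $\tilde g(Y,Z)=\frac{1}{\sqrt{1-\kappa}}g(Y,\varphi h Z)+\eta(Y)\eta(Z)$, and then differentiate $\varphi h$ as a product, invoking $(\nabla_X\varphi h)Y$ directly from \eqref{Blair3}, which is tailor-made for exactly this computation. This is the step where all the structural identities $h\varphi+\varphi h=0$, $h^2=(\kappa-1)\varphi^2$, $\eta\circ h=0$, and the three Blair formulas get combined.

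The main obstacle is purely computational bookkeeping: after obtaining $(\nabla_X\tilde g)(Y,Z)$, one must solve $2\tilde g(D(X,Y),Z)=(\nabla_X\tilde g)(Y,Z)+(\nabla_Y\tilde g)(X,Z)-(\nabla_Z\tilde g)(X,Y)$ for $D(X,Y)$, and since $\tilde g$ is only \emph{semi}-Riemannian (signature $(n+1,n)$) one must invert it carefully. Because $\tilde g$ is nondegenerate with $\tilde g(\xi,\xi)=1$ and $\tilde\varphi=\frac{1}{\sqrt{1-\kappa}}h$ plays the role of the paracontact structure tensor, I can recover $D(X,Y)$ from $\tilde g(D(X,Y),\cdot)$ by contracting against an adapted frame, or more cleanly by noting $\tilde g(W,Z)=0$ for all $Z$ forces $W=0$. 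The terms $\eta(X)\varphi Y$, $\eta(X)hY$ and the coefficients $\frac{\mu}{2}$, $\frac{1}{\sqrt{1-\kappa}}$, $\frac{2-\mu}{\sqrt{1-\kappa}}$ in the stated formula arise precisely from the $\mu$-dependent term $-\mu\varphi h Y$ in \eqref{Blair2} and the $(1-\kappa)$-scaling in the definition of $\tilde\varphi$.

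Finally I would verify the $\xi$-component separately, since it carries the terms $X(\eta(Y))$ and $-\eta(\nabla_X Y)$: these come from differentiating the $\eta\otimes\eta$ summand of $\tilde g$ together with the torsion-free normalization, and one checks consistency by testing the candidate formula against the known identities $\tilde\nabla\xi=-\tilde\varphi+\tilde\varphi\tilde h$ and $\tilde\nabla\tilde g=0$ recalled in the Preliminaries. I expect no conceptual difficulty beyond organizing the algebra; the symmetry of $D$ in $X,Y$ provides a useful internal check at each stage, and the appearance of exactly the coefficients $\tilde\kappa=\left(1-\frac{\mu}{2}\right)^2+\kappa-2$ elsewhere in the paper confirms that the $\mu$ and $\sqrt{1-\kappa}$ factors have been tracked correctly.
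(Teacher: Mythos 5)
Your proposal is correct and is essentially the paper's own argument in a mildly repackaged form: your symmetrized metricity identity for the difference tensor $D$ is exactly the Koszul formula for $\tilde\nabla$ rewritten relative to the background connection $\nabla$, and both routes hinge on the same inputs --- the explicit expression $\tilde g=\frac{1}{\sqrt{1-\kappa}}g(\cdot,\varphi h\,\cdot)+\eta\otimes\eta$, Blair's formula \eqref{Blair3} for $\nabla(\varphi h)$, and nondegeneracy to recover the vector field (the paper inverts by applying $\varphi h$ and using $h^2=(\kappa-1)\varphi^2$, which is just a concrete version of your frame-contraction step). No gap; the only difference is organizational.
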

\begin{proof}
By using Theorem 3.4 we get for each $X,Y,Z\in \Gamma (TM)$
\begin{eqnarray*}
2\tilde g(\tilde \nabla_XY,Z)&=& X(\tilde g (Y,Z))+Y(\tilde g(X,Z)) - Z(\tilde g(X,Y))\\&& +\tilde g([X,Y],Z)+\tilde g([Z,X],Y)-\tilde g([Y,Z],X)\\
&=& \frac 1{\sqrt {1-\kappa}}\bigl(  X(g(Y,\varphi h Z))+Y(g(X,\varphi h Z))-Z(g(X,\varphi h Y))\\
&& + g([X,Y],\varphi h Z)+  g([Z,X],\varphi h Y))-g([Y,Z],\varphi h
X)) \bigr)\\  && + X(\eta (Y)\eta(Z)) + Y(\eta (X)\eta (Z)) - Z(\eta
(X)\eta(Y))\\ && + \eta([X,Y])\eta (Z)+ \eta ([Z,X])\eta
(Y)-\eta([Y,Z])\eta(X).
\end{eqnarray*}
Hence if we apply the symmetry of $\varphi \circ h$ and the
parallelism of $g$ with respect to $\nabla$, we  obtain
\begin{align*}
2\tilde g(\tilde \nabla_XY,Z)&=\frac 1 {\sqrt {1-\kappa}}\bigl( 2
g(\varphi h\nabla_XY,Z) + g(Y,(\nabla_X\varphi
h)Z)+g(X,(\nabla_Y\varphi h)Z)-g(X,(\nabla_Z\varphi h)Y) \bigr) \\
&\quad+2\bigl(d\eta(X,Z)\eta (Y) +d\eta(Y,Z)\eta(X)
-d\eta(X,Y)\eta(Z)+X(\eta(Y))\eta(Z)\bigr),
\end{align*}
so that by using \eqref{Blair3}, after a long but straightforward
calculation
\begin{align*}
2\tilde g(\tilde \nabla_XY,Z)&=g\left( \frac
1{\sqrt{1-\kappa}}\bigl( 2 \varphi h (\nabla_XY)+ \mu \bigl(\eta
(X)hY+\eta (Y) hX\bigr)
- 2\bigl(\eta(X)\varphi Y+ \eta (Y)\varphi X \bigr), Z\right)\\
&\quad+2g\left(\frac {2-\mu}{2\sqrt {1-\kappa}} g(hX,Y) -{\sqrt
{1-\kappa}}g(\varphi^2X,Y)-g(X,\varphi Y)+ X(\eta(Y))  \bigr)\xi, Z
 \right).
\end{align*}
It is easy to see that $\tilde g(\tilde \nabla_X Y,\xi)=\eta
(\tilde\nabla_XY)$ and then by the previous identity and Theorem 3.4
we get
\begin{equation}
\label{LC} \varphi h \tilde \nabla_X Y = \varphi h \nabla_XY +\frac
\mu 2\bigl(\eta (X)hY+\eta(Y) hX\bigr) - \sqrt {1-\kappa}(\eta
(X)\varphi Y + \eta (Y) \varphi X ).
\end{equation}
We finally apply $\varphi h$ to both the sides of \eqref{LC}, use $h
\varphi =-\varphi h$, $h^2=(\kappa-1)\varphi^2$ and
straightforwardly get the claimed relation.
\end{proof}

We now prove that the canonical paracontact metric structure
$(\tilde\varphi,\xi,\eta,\tilde g)$ satisfies a suitable nullity
condition. To this end we need to prove the following fundamental
lemmas.

\begin{lemma}\label{lemmaluigia}
For the canonical paracontact metric structure
$(\tilde\varphi,\xi,\eta,\tilde g)$ of Theorem \ref{principale1}, we
have
\begin{equation}\label{formule}
\tilde h=\frac{1}{2\sqrt{1-k}}\left(\left(2-\mu\right)\varphi\circ
h+2\left(1-\kappa\right)\varphi\right), \ \ \ {\tilde
h}^2=\left(1-\kappa-\left(1-\frac{\mu}{2}\right)^2\right)\varphi^2.
\end{equation}
\end{lemma}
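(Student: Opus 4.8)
The plan is to reduce everything to a single computation of the Lie derivative $\mathcal{L}_\xi h$. By Theorem \ref{principale1} the canonical structure tensor is $\tilde\varphi = \frac{1}{\sqrt{1-\kappa}}h$, so
\[
\tilde h = \frac{1}{2}\mathcal{L}_\xi\tilde\varphi = \frac{1}{2\sqrt{1-\kappa}}\mathcal{L}_\xi h,
\]
and the first identity in \eqref{formule} is therefore equivalent to the single relation $\mathcal{L}_\xi h = (2-\mu)\varphi h + 2(1-\kappa)\varphi$, which I would establish directly and from which the rest is algebra.

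To compute $\mathcal{L}_\xi h$ I would pass to the Levi Civita connection. Since $\nabla$ is torsion-free, for a $(1,1)$-tensor one has the standard identity $(\mathcal{L}_\xi h)X = (\nabla_\xi h)X - \nabla_{hX}\xi + h\nabla_X\xi$, and I would evaluate the three summands separately using only the formulas already in the excerpt. For the first term, specializing \eqref{Blair2} to the covariant direction $\xi$ and using $\eta\circ\varphi = 0$, $\varphi\xi = 0$, $h\xi = 0$ annihilates all but the last summand, leaving $(\nabla_\xi h)X = -\mu\varphi h X$. For the other two terms I would substitute $\nabla\xi = -\varphi - \varphi h$ from \eqref{eq-contact-7} and then simplify with the anticommutation $h\varphi + \varphi h = 0$, the relation $h^2 = (\kappa-1)\varphi^2$ of \cite{BKP-95}, and $\varphi^3 = -\varphi$ (a consequence of $\varphi^2 = -I + \eta\otimes\xi$); both $-\nabla_{hX}\xi$ and $h\nabla_X\xi$ collapse to the same expression $\varphi h X + (1-\kappa)\varphi X$. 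Adding the three contributions gives $\mathcal{L}_\xi h = -\mu\varphi h + 2\varphi h + 2(1-\kappa)\varphi$, which is the first formula of \eqref{formule}.

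For the second identity I would simply square the first. Expanding $\bigl(2\sqrt{1-\kappa}\,\tilde h\bigr)^2 = \bigl((2-\mu)\varphi h + 2(1-\kappa)\varphi\bigr)^2$, the two cross terms are $\varphi h\varphi$ and $\varphi\varphi h$; using $h\varphi = -\varphi h$ these equal $-\varphi^2 h$ and $+\varphi^2 h$ and hence cancel exactly. The diagonal terms reduce via $h^2 = (\kappa-1)\varphi^2$ and $\varphi^4 = -\varphi^2$, giving $(\varphi h)^2 = -(1-\kappa)\varphi^2$, so that
\[
4(1-\kappa)\tilde h^2 = \bigl[-(2-\mu)^2(1-\kappa) + 4(1-\kappa)^2\bigr]\varphi^2.
\]
Dividing by $4(1-\kappa)$ and rewriting $\frac{(2-\mu)^2}{4} = \bigl(1-\frac{\mu}{2}\bigr)^2$ yields $\tilde h^2 = \bigl(1-\kappa-(1-\frac{\mu}{2})^2\bigr)\varphi^2$, as claimed.

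The argument is essentially bookkeeping, and the only place requiring genuine care is the computation of $\mathcal{L}_\xi h$: one must expand the Lie derivative of a $(1,1)$-tensor correctly (not conflate it with a covariant derivative) and track the structure identities without sign errors. The feature that makes the second formula come out clean is the exact cancellation of the cross terms $\varphi h\varphi$ and $\varphi\varphi h$ forced by $h\varphi = -\varphi h$; this is the step I would verify most carefully.
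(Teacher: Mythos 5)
Your proof is correct, and it reaches both identities by a route that is noticeably more economical than the paper's, though built from the same ingredients. The paper computes the double Lie derivative $\mathcal{L}_\xi(\mathcal{L}_\xi\varphi)$ head-on: every bracket $[\xi,\cdot]$ is expanded through the Levi Civita connection in a long multi-line calculation, which is then condensed to $2(\nabla_\xi h)+4h^2\varphi-4h\varphi$ and finished with $h^2=(\kappa-1)\varphi^2$ and the formula $\nabla_\xi h=\mu h\varphi$ quoted from \cite{BKP-95}. You instead exploit the closed form $\tilde\varphi=\frac{1}{\sqrt{1-\kappa}}\,h$ of Theorem \ref{principale1}, so that only the single Lie derivative $\mathcal{L}_\xi h$ is needed, and you evaluate it with the torsion-free identity $(\mathcal{L}_\xi h)X=(\nabla_\xi h)X-\nabla_{hX}\xi+h\nabla_X\xi$; your input $(\nabla_\xi h)X=-\mu\varphi hX$, read off from \eqref{Blair2} at the direction $\xi$, is the same fact as the paper's $\nabla_\xi h=\mu h\varphi$, since $h\varphi=-\varphi h$. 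The mathematical content is therefore identical ($\nabla\xi=-\varphi-\varphi h$, $h^2=(\kappa-1)\varphi^2$, the anticommutation of $h$ and $\varphi$, and the BKP-95 formula for $\nabla_\xi h$), but your organization replaces the paper's brute-force expansion by three short evaluations --- a genuine saving in bookkeeping, at the price of invoking (correctly) the general Lie-derivative/connection identity for $(1,1)$-tensor fields. Your squaring argument for the second identity, with the cross terms cancelling via $h\varphi=-\varphi h$ and the diagonal terms reducing through $(\varphi h)^2=-(1-\kappa)\varphi^2$, makes explicit precisely what the paper dismisses as ``a straightforward consequence''.
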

\begin{proof}
Using the identities $\nabla\xi=-\varphi-\varphi h$,
$\nabla_{\xi}\varphi=0$ and $\varphi^2 h=-h$, we get
\begin{align*}
2\tilde h&=(\mathcal L_\xi (\mathcal L_\xi \varphi))
X\\
&=[\xi,(\mathcal L_\xi\varphi) X]-(\mathcal L_\xi\varphi)[\xi,X]\\
&=[\xi,[\xi,\varphi X]-2[\xi,\varphi[\xi,X]]+\varphi [\xi,[\xi,X]]\\
&=\nabla_{\xi}[\xi,\varphi X]+\varphi[\xi,\varphi X]+\varphi h [\xi,\varphi X]-2\nabla_{\xi}\varphi[\xi,X]-2(\varphi^2[\xi,X]+\varphi h\varphi[\xi,X])+\varphi\nabla_{\xi}[\xi,X]\\
&\quad-\varphi(-\varphi[\xi,X]-\varphi h[\xi,X])\\
&=\nabla_{\xi}\nabla_{\xi}\varphi X-\nabla_{\xi}(-\varphi^2
X-\varphi h \varphi X)+\varphi\nabla_{\xi}\varphi
X-\varphi(-\varphi^2 X-\varphi h \varphi X)+\varphi h
\nabla_{\xi}\varphi X\\
&\quad-\varphi h (-\varphi^2
X-\varphi h \varphi X)-2\nabla_{\xi}\varphi\nabla_{\xi}X+2\nabla_{\xi}\varphi(-\varphi X-\varphi h X)-2\varphi^2\nabla_{\xi}X\\
&\quad+2\varphi^2(-\varphi X-\varphi h X)+2\varphi^2 h
\nabla_{\xi}X-2\varphi^2 h(-\varphi X - \varphi h
X)+\varphi\nabla_{\xi}\nabla_{\xi}X\\
&\quad-\varphi\nabla_{\xi}(-\varphi X-\varphi h
X)+\varphi^2\nabla_{\xi}X-\varphi^2(-\varphi X-\varphi h
X)+\varphi^2 h\nabla_{\xi}X-\varphi^2 h(-\varphi X-\varphi h X)\\
&=\nabla_{\xi}\varphi^2 X+\nabla_{\xi}h X+\nabla_{\xi}\varphi^2
X-\varphi X-h\varphi X+h\nabla_{\xi}X-\varphi h X+h^2\varphi
X-2\nabla_{\xi}\varphi^2 X\\
&\quad-2\nabla_{\xi}\varphi^2 h X-2\varphi^2\nabla_{\xi}X+2\varphi X+2\varphi h X-2h\nabla_{\xi}X-2h\varphi X+2h^2\varphi X+\varphi^2\nabla_{\xi}X\\
&\quad+\varphi^2\nabla_{\xi}h X+\varphi^2\nabla_{\xi}X-\varphi
X-\varphi h X - h\nabla_{\xi}X-h\varphi X+h^2\varphi X\\
&=2(\nabla_\xi h)X
+4h^2\varphi X -4h \varphi X.
\end{align*}
Now since $h^2=(\kappa-1)\varphi^2$ and  $\nabla_\xi h= \mu h
\varphi$ (\cite{BKP-95}), we obtain the first identity in
\eqref{formule}, while the second is a straightforward consequence.
\end{proof}

\begin{lemma}\label{lemmamino}
Let $(M,\varphi,\xi,\eta,g)$ be a contact metric
$(\kappa,\mu)$-manifold and let $(\tilde\varphi,\xi,\eta,\tilde g)$
be the canonical paracontact metric structure induced on $M$,
according to Theorem \ref{principale1}. Then the Levi Civita
connection $\tilde\nabla$ of $(M,\tilde g)$ verifies
\begin{gather}\label{formula1}
(\tilde\nabla_{X}\tilde\varphi)Y=-\tilde g(X-\tilde h
X,Y)\xi+\eta(Y)(X-\tilde h X),\\
(\tilde\nabla_{X}\tilde h)Y=-\eta(Y)(\tilde\varphi\tilde h
X-\tilde\varphi\tilde h^2 X)-2\eta(X)\tilde\varphi\tilde h Y-\tilde
g(X,\tilde\varphi\tilde h Y+\tilde\varphi\tilde h^2
Y)\xi,\label{formula2}
\end{gather}
for all $X,Y\in\Gamma(TM)$.
\end{lemma}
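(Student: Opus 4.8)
The plan is to establish both identities \eqref{formula1} and \eqref{formula2} by direct computation, exploiting the explicit expressions already available for $\tilde\varphi$, $\tilde h$, and the relationship between $\tilde\nabla$ and $\nabla$. For \eqref{formula1}, I observe that this is precisely the defining equation \eqref{condizioneparasasaki} of a para-Sasakian structure \emph{modulo} the $\tilde h$-correction terms; more to the point, it is exactly property (ii) of Theorem \ref{paratanaka} rewritten, namely the statement that $(\tilde\nabla^{pc}_X\tilde\varphi)Y = (\tilde\nabla_X\tilde\varphi)Y - \eta(Y)(X-\tilde h X) + \tilde g(X-\tilde h X,Y)\xi$ equals zero. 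Thus \eqref{formula1} is equivalent to $\tilde\nabla^{pc}\tilde\varphi = 0$, which by Theorem \ref{integrability} holds if and only if the paracontact structure is integrable. Since the canonical paracontact structure comes from the \emph{integrable} bi-Legendrian pair $({\mathcal D}(\lambda),{\mathcal D}(-\lambda))$ (both eigendistributions being involutive by Theorem \ref{teoremagreci}), integrability is immediate, and \eqref{formula1} follows at once. This is the clean, conceptual route and I would present it first.

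For \eqref{formula2}, the plan is a brute-force covariant differentiation starting from the closed form $\tilde h = \frac{1}{2\sqrt{1-\kappa}}\bigl((2-\mu)\varphi h + 2(1-\kappa)\varphi\bigr)$ provided by Lemma \ref{lemmaluigia}. I would compute $(\tilde\nabla_X \tilde h)Y = \tilde\nabla_X(\tilde h Y) - \tilde h(\tilde\nabla_X Y)$ by substituting the expression for $\tilde\nabla$ from Proposition \ref{levicivita1} in terms of $\nabla$, and then feeding in Blair's covariant-derivative formulas \eqref{Blair1}, \eqref{Blair2}, \eqref{Blair3} for $\nabla\varphi$, $\nabla h$, and $\nabla(\varphi h)$. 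The key algebraic inputs throughout are $h\varphi + \varphi h = 0$, $h^2 = (\kappa-1)\varphi^2$, $\varphi^2 = -I + \eta\otimes\xi$, together with $h\xi=0$, $\eta\circ h=0$. The target right-hand side is itself expressed in terms of $\tilde h$ and $\tilde h^2$, so I would simultaneously use Lemma \ref{lemmaluigia} to rewrite $\tilde h^2 = \bigl(1-\kappa-(1-\frac{\mu}{2})^2\bigr)\varphi^2$ and to translate the composite operators $\tilde\varphi\tilde h$ and $\tilde\varphi\tilde h^2$ back into $\varphi$, $h$, $\varphi h$ terms; doing so turns the desired identity into a polynomial identity among $\varphi$, $h$, $\varphi h$ with explicit scalar coefficients built from $\kappa$ and $\mu$, which should then be verified componentwise.

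The main obstacle, as is typical for such formulas, is purely computational bookkeeping rather than any conceptual difficulty: the expression for $\tilde\nabla$ in Proposition \ref{levicivita1} carries several $\eta$-weighted and $\xi$-valued correction terms, and both $\tilde h Y$ and $\tilde h(\tilde\nabla_X Y)$ generate many cross terms after applying the Leibniz rule. I expect the delicate part to be the collection and cancellation of the $\xi$-component, where the contributions of the connection-correction term (the long $\xi$-coefficient in Proposition \ref{levicivita1}) must be shown to combine with the terms coming from $d\eta(\cdot,\cdot)$ and from $(\nabla\varphi h)$ so as to reproduce exactly $-\tilde g(X,\tilde\varphi\tilde h Y+\tilde\varphi\tilde h^2 Y)\xi$. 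To control this I would verify the identity separately on the three building blocks of the structure: first evaluate both sides with $Y=\xi$ (using $\tilde h\xi=0$, $\eta\circ\tilde h=0$, and $\tilde\nabla_X\xi = -\tilde\varphi + \tilde\varphi\tilde h$) to pin down the terms proportional to $\eta(Y)$; then restrict $X,Y$ to the contact distribution $\mathcal D$ and check the $\mathcal D$-valued part; and finally isolate and match the $\xi$-valued part by pairing with $\xi$ via $\tilde g$. Splitting the verification this way reduces the single unwieldy identity to three shorter checks and localizes where the curvature-type corrections enter.
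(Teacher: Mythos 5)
Your treatment of \eqref{formula1} is exactly the paper's: both you and the authors deduce it from the integrability of the canonical paracontact structure (it comes from the involutive pair $({\mathcal D}(\lambda),{\mathcal D}(-\lambda))$), Theorem \ref{integrability}, and property (ii) of Theorem \ref{paratanaka}. For \eqref{formula2}, however, you take a genuinely different and much heavier route. The paper stays inside the same circle of ideas used for \eqref{formula1}: since the structure is integrable, Theorem \ref{connection} identifies the canonical paracontact connection $\tilde\nabla^{pc}$ with the bi-Legendrian connection $\nabla^{bl}$, and Theorem \ref{principale0} gives $\nabla^{bl}\varphi=\nabla^{bl}h=0$; since $\tilde h$ is a constant-coefficient combination of $\varphi\circ h$ and $\varphi$ by Lemma \ref{lemmaluigia}, this forces $(\tilde\nabla^{pc}_{X}\tilde h)Y=0$, and \eqref{formula2} then drops out in a few lines by writing $(\tilde\nabla_{X}\tilde h)Y=\tilde\nabla_{X}\tilde h Y-\tilde h\tilde\nabla_{X}Y$, substituting the explicit correction terms of \eqref{paradefinition}, and using $\tilde h\xi=0$, $\eta\circ\tilde h=0$, $\tilde h\tilde\varphi=-\tilde\varphi\tilde h$ and the symmetry of $\tilde h$. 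Your plan instead runs a brute-force computation through Proposition \ref{levicivita1} and Blair's formulas \eqref{Blair1}--\eqref{Blair3}; this is viable --- every ingredient you cite is available and correct, and the identity is tensorial, so a componentwise check closes it --- but it redoes by hand precisely the work that the parallelism $\tilde\nabla^{pc}\tilde h=0$ encapsulates, and it is considerably more error-prone. What your route buys is independence from Theorems \ref{connection} and \ref{principale0}; what the paper's route buys is brevity and the conceptual point that \emph{both} identities of the lemma express one and the same fact, namely that the bi-Legendrian/canonical paracontact connection parallelizes all the structure tensors.

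One small repair to your verification scheme for \eqref{formula2}: your three checks ($Y=\xi$ with $X$ arbitrary; $X,Y\in\Gamma(\mathcal D)$ with the $\mathcal D$-valued part; $X,Y\in\Gamma(\mathcal D)$ with the $\xi$-valued part) omit the case $X=\xi$, $Y\in\Gamma(\mathcal D)$, which is precisely the case that detects the term $-2\eta(X)\tilde\varphi\tilde h Y$ on the right-hand side. You must add that case explicitly, or simply carry out the computation for fully general $X,Y$, as your primary plan in fact proposes.
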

\begin{proof}
\eqref{formula1} easily follows from the integrability of
$(\tilde\varphi,\xi,\eta,\tilde g)$, taking  Theorem
\ref{integrability} into account. In order to prove
\eqref{formula2}, let $\nabla^{bl}$ be the bi-Legendrian connection
associated to the bi-Legendrian structure $({\mathcal
D}(\lambda),{\mathcal D}(-\lambda))$. Notice that $\nabla^{bl}$
coincides with the canonical paracontact connection
$\tilde\nabla^{pc}$, so that, by using the first formula in
\eqref{formule} and since, by Theorem \ref{principale0},
$\nabla^{bl}h=\nabla^{bl}\varphi=0$, we have
\begin{align}\label{formulapreliminare1}
\nonumber(\tilde\nabla^{pc}_{X}\tilde h)Y&=(\nabla^{bl}_{X}\tilde
h)Y\\
&=\frac{1}{2\sqrt{1-k}}\left((2-\mu)(\nabla^{bl}_{X}\varphi h)Y+2(1-k)(\nabla^{bl}_{X}\varphi)Y\right)\\
&=\frac{2-\mu}{2\sqrt{1-k}}\left((\nabla_{X}^{bl}\varphi)h Y +
\varphi(\nabla_{X}^{bl}h)Y\right)+\frac{1-k}{\sqrt{1-k}}(\nabla^{bl}_{X}\varphi)Y=0.
\nonumber
\end{align}
Now, by \eqref{paradefinition}, \eqref{formulapreliminare1} and the
properties of the operator $\tilde h$,
\begin{align*}
(\tilde\nabla_{X}\tilde h)Y&=\tilde\nabla_{X}\tilde h Y - \tilde h
\tilde\nabla_{X}Y\\
&=(\tilde\nabla^{pc}_{X}\tilde h) Y-\eta(X)\tilde\varphi\tilde h
Y-\eta(\tilde h Y)(\tilde\varphi X-\tilde\varphi\tilde h X)-\tilde
g(X,\tilde\varphi\tilde h Y)\xi+\tilde g(\tilde h
X,\tilde\varphi\tilde h Y)\xi\\
&\quad+\eta(X)\tilde h\tilde\varphi Y+\eta(Y)(\tilde h\tilde\varphi
X-\tilde h\tilde\varphi\tilde h X)+\tilde g(X,\tilde\varphi Y)\tilde
h\xi-\tilde g(\tilde h X,\tilde\varphi Y)\tilde h\xi\\
&=-\eta(Y)(\tilde\varphi\tilde h X-\tilde\varphi\tilde h^2
X)-2\eta(X)\tilde\varphi\tilde h Y-\tilde g(X,\tilde\varphi\tilde h
Y+\tilde\varphi\tilde h^2 Y)\xi,
\end{align*}
as claimed.
\end{proof}

We now are able to prove the following result.

\begin{theorem}\label{principale2}
Let $(M,\varphi,\xi,\eta,g)$ be a contact metric
$(\kappa,\mu)$-manifold and let $(\tilde\varphi,\xi,\eta,\tilde g)$
be the canonical paracontact metric structure induced on $M$. Then
the curvature tensor field of the Levi Civita connection of
$(M,\tilde g)$ verifies the following relation
\begin{equation*}
\tilde R_{X
Y}\xi=\tilde\kappa\left(\eta(Y)X-\eta(X)Y\right)+\tilde\mu(\eta(Y)\tilde
h X-\eta(X)\tilde h Y),
\end{equation*}
for all $X,Y\in\Gamma(TM)$, where
\begin{equation}\label{valori}
\tilde\kappa=\kappa-2+\left(1-\frac{\mu}{2}\right)^2, \ \ \
\tilde\mu=2.
\end{equation}
\end{theorem}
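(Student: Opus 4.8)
The plan is to compute $\tilde{R}_{XY}\xi$ directly from the second Bianchi-free definition of the curvature tensor, namely
\begin{equation*}
\tilde{R}_{XY}\xi = \tilde{\nabla}_{X}\tilde{\nabla}_{Y}\xi - \tilde{\nabla}_{Y}\tilde{\nabla}_{X}\xi - \tilde{\nabla}_{[X,Y]}\xi,
\end{equation*}
and to feed into this the two structural identities proven in the preliminary lemmas. The crucial first ingredient is the relation $\tilde{\nabla}\xi = -\tilde{\varphi} + \tilde{\varphi}\tilde{h}$, valid in any paracontact metric manifold (stated in the Preliminaries), which converts the problem of computing $\tilde{\nabla}_{Y}\xi$ into an expression involving only $\tilde{\varphi}$ and $\tilde{h}$. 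First I would substitute this into the curvature formula, so that $\tilde{R}_{XY}\xi$ becomes a combination of covariant derivatives of $\tilde{\varphi}$ and $\tilde{h}$ contracted against $X$ and $Y$, together with terms where the derivatives fall on the argument.

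The second and main ingredient is Lemma~\ref{lemmamino}, which supplies explicit formulas for $(\tilde{\nabla}_{X}\tilde{\varphi})Y$ in \eqref{formula1} and for $(\tilde{\nabla}_{X}\tilde{h})Y$ in \eqref{formula2}. My plan is to expand each term $\tilde{\nabla}_{X}(\tilde{\nabla}_{Y}\xi)$ via the product rule, $\tilde{\nabla}_{X}(-\tilde{\varphi}Y' + \tilde{\varphi}\tilde{h}Y')$ where $Y' = \tilde{\nabla}_{Y}\xi$, and to repeatedly replace every occurrence of $(\tilde{\nabla}\tilde{\varphi})$ and $(\tilde{\nabla}\tilde{h})$ by the right-hand sides of \eqref{formula1} and \eqref{formula2}. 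The terms $\tilde{\nabla}_{[X,Y]}\xi$ are handled the same way. After this substitution, the $[X,Y]$-contributions coming from the product rule will cancel against $\tilde{\nabla}_{[X,Y]}\xi$, leaving only algebraic (undifferentiated) expressions in $\tilde{\varphi}$, $\tilde{h}$, $\eta$, and the metric $\tilde{g}$ evaluated on $X$, $Y$, and $\xi$.

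The algebraic simplification then rests on the operator identities one has at hand: $\tilde{h}\xi = 0$, $\eta\circ\tilde{h} = 0$, $\tilde{\varphi}\tilde{h} = -\tilde{h}\tilde{\varphi}$, $\tilde{\varphi}^{2} = I - \eta\otimes\xi$, and—decisively—the identity $\tilde{h}^{2} = \bigl(1-\kappa-(1-\tfrac{\mu}{2})^{2}\bigr)\varphi^{2}$ from Lemma~\ref{lemmaluigia}. Because $\varphi^{2} = -I + \eta\otimes\xi = -\tilde{\varphi}^{2}$, this reads $\tilde{h}^{2} = \bigl((1-\tfrac{\mu}{2})^{2} - (1-\kappa)\bigr)\tilde{\varphi}^{2} = -\tilde{\kappa}\,\tilde{\varphi}^{2}$ on $\mathcal{D}$, which is exactly where the constant $\tilde{\kappa} = \kappa - 2 + (1-\tfrac{\mu}{2})^{2}$ must enter. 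I expect the computation to produce, after collecting terms, a coefficient of $(\eta(Y)X - \eta(X)Y)$ that equals $\tilde{\kappa}$ and a coefficient of $(\eta(Y)\tilde{h}X - \eta(X)\tilde{h}Y)$ that equals $2 = \tilde{\mu}$.

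The hard part will be the bookkeeping in the second step: the product-rule expansions of the two iterated covariant derivatives generate a large number of terms, many carrying factors of $\tilde{h}$ and $\tilde{h}^{2}$, and one must carefully track where the $\tilde{h}^{2}$-terms get rewritten via Lemma~\ref{lemmaluigia} so that the $-\tilde{\kappa}$ coefficient materializes with the right sign. A practical shortcut I would adopt is to evaluate everything against $\xi$ and against vectors in $\mathcal{D}$ separately, using $\tilde{R}_{XY}\xi \perp \xi$ and the antisymmetry in $X,Y$ to discard terms symmetric in $X$ and $Y$ at the outset; restricting to $X,Y \in \Gamma(\mathcal{D})$ with $\eta$-components handled by linearity reduces the count of surviving terms substantially. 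Once the identity is verified on $\mathcal{D}\times\mathcal{D}$ and in the mixed case where one argument is $\xi$ (where both sides vanish since $\tilde{R}_{X\xi}\xi$ reduces to a $\tilde{\varphi},\tilde{h}$-polynomial that collapses via $\tilde{h}\xi=0$), the general statement follows by the decomposition $TM = \mathcal{D}\oplus\mathbb{R}\xi$.
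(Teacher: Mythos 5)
Your strategy is the one the paper itself follows: start from $\tilde R_{XY}\xi=\tilde\nabla_X\tilde\nabla_Y\xi-\tilde\nabla_Y\tilde\nabla_X\xi-\tilde\nabla_{[X,Y]}\xi$, insert $\tilde\nabla\xi=-\tilde\varphi+\tilde\varphi\tilde h$, reorganize by the product rule into an expression in $(\tilde\nabla\tilde\varphi)$ and $(\tilde\nabla\tilde h)$ (this is exactly the paper's preliminary formula \eqref{formulapreliminare}), and then substitute \eqref{formula1}, \eqref{formula2} and the second identity of \eqref{formule}. However, two concrete errors in your plan would prevent it from producing \eqref{valori}. First, your coefficient identification is wrong: the rewriting $\tilde h^2=\bigl(\left(1-\frac{\mu}{2}\right)^2-(1-\kappa)\bigr)\tilde\varphi^2$ is correct, but this coefficient equals $1+\tilde\kappa$, not $-\tilde\kappa$; indeed $\left(1-\frac{\mu}{2}\right)^2-(1-\kappa)=\kappa-1+\left(1-\frac{\mu}{2}\right)^2=\tilde\kappa+1$, which is precisely the paper's relation \eqref{para1}. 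After the substitution, the curvature collects into $(c-1)\left(\eta(Y)X-\eta(X)Y\right)+2\left(\eta(Y)\tilde hX-\eta(X)\tilde hY\right)$, where $c$ is the constant in $\tilde h^2=c\,\tilde\varphi^2$; the correct $c=1+\tilde\kappa$ yields $\tilde\kappa$, whereas your $c=-\tilde\kappa$ would force $\tilde\kappa=-(1+\tilde\kappa)$, a contradiction with the statement you are trying to prove.

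Second, and more fundamentally, your verification shortcut is backwards. For $X,Y\in\Gamma(\mathcal D)$ every term on the right-hand side of the nullity condition carries a factor $\eta(X)$ or $\eta(Y)$, so on $\mathcal D\times\mathcal D$ the identity reads simply $\tilde R_{XY}\xi=0$ and contains no information whatsoever about $\tilde\kappa$ or $\tilde\mu$. All the content of the theorem sits in the mixed case, which you propose to discard as trivial: for $X\in\Gamma(\mathcal D)$ the claim is $\tilde R_{X\xi}\xi=\tilde\kappa X+2\tilde h X$, which is certainly not zero (compare also the paper's identity \eqref{RZ}), and it does not ``collapse via $\tilde h\xi=0$'' --- only the term $\tilde\nabla_X\tilde\nabla_\xi\xi$ dies, while $\tilde\nabla_\xi\tilde\nabla_X\xi$ and $\tilde\nabla_{[X,\xi]}\xi$ survive and produce exactly the $\tilde\kappa$- and $\tilde h$-terms. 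As written, your plan would therefore ``verify'' only a vacuous statement and never pin down the constants. The repair is to drop the shortcut and run the full substitution of \eqref{formula1}--\eqref{formula2} into the expanded curvature for arbitrary $X,Y\in\Gamma(TM)$, exactly as in your first two paragraphs (and as in the paper's proof), using the corrected coefficient $1+\tilde\kappa$.
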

\begin{proof}
First we prove the preliminary formula
\begin{equation}\label{formulapreliminare}
\tilde R_{X
Y}\xi=-(\tilde\nabla_{X}\tilde\varphi)Y+(\tilde\nabla_{Y}\tilde\varphi)X+(\tilde\nabla_{X}\tilde\varphi)\tilde
h Y+\tilde\varphi((\tilde\nabla_{X}\tilde
h)Y)-(\tilde\nabla_{Y}\tilde\varphi)\tilde h
X-\tilde\varphi((\tilde\nabla_{Y}\tilde h)X).
\end{equation}
Indeed for all $X,Y\in\Gamma(TM)$, using the identity
$\tilde\nabla\xi=-\tilde\varphi+\tilde\varphi\tilde h$, we get
\begin{align*}
\tilde R_{X
Y}\xi&=\tilde\nabla_{X}\tilde\nabla_{Y}\xi-\tilde\nabla_{Y}\tilde\nabla_{X}\xi-\tilde\nabla_{[X,Y]}\xi\\
&=-\tilde\nabla_{X}\tilde\varphi
Y+\tilde\nabla_{X}\tilde\varphi\tilde h
Y+\tilde\nabla_{Y}\tilde\varphi
X-\tilde\nabla_{Y}\tilde\varphi\tilde h
X+\tilde\varphi[X,Y]-\tilde\varphi\tilde h[X,Y]\\
&=-\tilde\nabla_{X}\tilde\varphi
Y+\tilde\nabla_{X}\tilde\varphi\tilde h
Y+\tilde\nabla_{Y}\tilde\varphi
X-\tilde\nabla_{Y}\tilde\varphi\tilde h
X+\tilde\varphi\tilde\nabla_{X}Y-\tilde\varphi\tilde\nabla_{Y}X-\tilde\varphi\tilde
h\tilde\nabla_{X}Y+\tilde\varphi\tilde h\tilde\nabla_{Y}X\\
&=-(\tilde\nabla_{X}\tilde\varphi)Y+(\tilde\nabla_{Y}\tilde\varphi)X+\tilde\nabla_{X}\tilde\varphi\tilde
h Y-\tilde\varphi\tilde\nabla_{X}\tilde h
Y+\tilde\varphi\tilde\nabla_{X}\tilde h
Y-\tilde\nabla_{Y}\tilde\varphi\tilde h X
+\tilde\varphi\tilde\nabla_{Y}\tilde h
X\\
&\quad-\tilde\varphi\tilde\nabla_{Y}\tilde h X-\tilde\varphi\tilde
h\tilde\nabla_{X}Y+\tilde\varphi\tilde h\tilde\nabla_{Y}X\\
&=-(\tilde\nabla_{X}\tilde\varphi)Y+(\tilde\nabla_{Y}\tilde\varphi)X+(\tilde\nabla_{X}\tilde\varphi)\tilde
h Y+\tilde\varphi((\tilde\nabla_{X}\tilde
h)Y)-(\tilde\nabla_{Y}\tilde\varphi)\tilde h
X-\tilde\varphi((\tilde\nabla_{Y}\tilde h)X).
\end{align*}
Therefore, replacing \eqref{formula1} and \eqref{formula2} in
\eqref{formulapreliminare} and using the second formula in
\eqref{formule}, we obtain
\begin{align*}
\tilde R_{X Y}\xi&=\tilde g(X-\tilde h X,Y)\xi-\eta(Y)(X-\tilde h
X)-\tilde g(Y-\tilde h Y,X)\xi+\eta(X)(Y-\tilde h Y)\\
&\quad-\tilde g(X-\tilde h X,\tilde h Y)\xi+\eta(\tilde h
Y)(X-\tilde h X)-\eta(Y)(\tilde\varphi^2\tilde h
X-\tilde\varphi^2\tilde h^2 X)-2\eta(X)\tilde\varphi^2\tilde h
Y\\
&\quad+\tilde g(Y-\tilde h Y,\tilde h X)\xi-\eta(\tilde h
X)(Y-\tilde h Y)+\eta(X)(\tilde\varphi^2\tilde h
Y-\tilde\varphi^2\tilde h^2 Y)+2\eta(Y)\tilde\varphi^2\tilde h
X\\
&=\tilde g(X,Y)\xi-\tilde g(\tilde h X,Y)\xi-\eta(Y)X+\eta(Y)\tilde
h X-\tilde g(Y,X)\xi+\tilde g(\tilde h
Y,X)\xi+\eta(X)Y\\
&\quad-\eta(X)\tilde h Y-\tilde g(X,\tilde h Y)\xi+\tilde g(\tilde h
X,\tilde h Y)\xi-\eta(Y)\tilde\varphi^2\tilde h
X+\eta(Y)\tilde\varphi^2\tilde h^2 X-2\eta(X)\tilde\varphi^2\tilde h
Y\\
&\quad+\tilde g(Y,\tilde h X)\xi-\tilde g(\tilde h Y,\tilde h
X)\xi+\eta(X)\tilde\varphi^2\tilde
h Y-\eta(X)\tilde\varphi^2\tilde h^2 Y+2\eta(Y)\tilde\varphi^2\tilde h X\\
&=-\eta(Y)X+\eta(Y)\tilde h X+\eta(X)Y-\eta(X)\tilde h
Y-2\eta(X)\tilde h Y-\eta(Y)\tilde h X+\eta(Y)\tilde h^2 X\\
&\quad+2\eta(Y)\tilde h X+\eta(X)\tilde h Y-\eta(X)\tilde h^2 Y\\
&=-\eta(Y)X+\eta(X)Y+\left(1-\kappa-\left(1-\frac{\mu}{2}\right)^2\right)\eta(Y)\varphi^2
X-\left(1-\kappa-\left(1-\frac{\mu}{2}\right)^2\right)\eta(X)\varphi^2
Y\\
&\quad-2\eta(X)\tilde h Y+2\eta(Y)\tilde h X\\
&=\left(\kappa-2+\left(1-\frac{\mu}{2}\right)^2\right)\left(\eta(Y)X-\eta(X)Y\right)+2\left(\eta(Y)\tilde
h X-\eta(X)\tilde h Y\right).
\end{align*}
\end{proof}

Theorem \ref{principale2} justifies the following definition. A
paracontact metric manifold $(M,\tilde\varphi,\xi,\eta,\tilde g)$ is
said to be a \emph{paracontact metric
$(\tilde\kappa,\tilde\mu)$-manifold} if the curvature tensor field
of the Levi Civita connection satisfies
\begin{equation}
\label{km} \tilde R_{XY}\xi=\tilde\kappa (\eta(Y) X -\eta (X) Y)
+\tilde\mu (\eta(Y) \tilde hX -\eta (X)\tilde hY),
\end{equation}
where $\tilde\kappa$, $\tilde\mu$ are real constants. Using
\eqref{km} and the formula (cf. \cite{zamkovoy})
\begin{equation}
\label{RZ} \tilde R_{\xi X}\xi+\tilde\varphi \tilde R_{\xi
\tilde\varphi X}\xi=2(\tilde\varphi^2 X-\tilde h^2X),
\end{equation}
one can easily prove that
\begin{equation}\label{para1}
\tilde h^2=(1+\tilde\kappa)\tilde\varphi^2.
\end{equation}
In particular for $\tilde\kappa=-1$ we get $\tilde h^2=0$ and now
the analogy with contact metric $(\kappa,\mu)$-manifolds breaks down
because, since the metric $\tilde g$ is not positive definite, we
can not conclude that $\tilde h=0$ and the manifold is
para-Sasakian. Natural questions may be whether there exist explicit
examples of paracontact metric manifolds such that $\tilde h^2=0$
but $\tilde h\neq 0$ and whether the
$(\tilde\kappa,\tilde\mu)$-nullity condition \eqref{km} could force
the operator $\tilde h$ to vanish identically even if the metric
$\tilde g$ is not positive definite. It should be also remarked that
though paracontact metric manifolds with $\tilde h^2=0$ have made
their appearance in several contexts (see for instance Theorem 3.12
of \cite{zamkovoy}), at the knowledge of the authors not even one
explicit example of them has been given. Now we provide an example
which  solves the questions stated before.

\begin{example}
\emph{Let $\mathfrak g$ be the $5$-dimensional Lie algebra with
basis ${X_1,X_2,Y_1,Y_2,\xi}$ and non vanishing  Lie brackets
defined by}
\begin{gather*}
[X_1,X_2]=2X_2,\ [X_1,Y_1]=2\xi,\ [X_2,Y_1]=-2Y_2,\
[X_2,Y_2]=2(Y_1+\xi),\\
[\xi,X_1]=-2Y_1,\ \ [\xi,X_2]=-2Y_2.
\end{gather*}
\emph{Let $G$ be a Lie group whose Lie algebra is $\mathfrak g$. On
$G$ we define a left-invariant paracontact metric structure
$(\tilde\varphi,\xi,\eta,\tilde g)$ by setting $\tilde\varphi\xi=0$
and  $\tilde\varphi X_i=X_i$, $\tilde\varphi Y_i=-Y_i$,
$\eta(X_i)=\eta(Y_i)=0$, $\eta(\xi)=1$, and $\tilde
g(X_i,X_j)=\tilde g(Y_i,Y_j)=0$, $\tilde g(X_i,Y_i)=1$, $\tilde
g(X_1,Y_2)=\tilde g(X_2,Y_1)=0$, for all $i,j\in\{1,2\}$. Then a
direct computation shows that $\tilde h^2$ vanishes identically, but
$\tilde h\neq 0$ since, for example, $\tilde h X_1=-Y_1$. Moreover,
one can verify that $(G,\tilde\varphi,\xi,\eta,\tilde g)$ is a
paracontact metric $(\tilde\kappa,\tilde\mu)$-manifold, with
$\tilde\kappa=-1$ and $\tilde\mu=2$.}
\end{example}

\section{The canonical sequence of contact and paracontact metric structures\\ associated with a contact metric
$(\kappa,\mu)$-space}\label{sezioneprincipale}

In this section we will show that in fact the procedure of Theorem
\ref{principale1}, used for defining the canonical paracontact
metric structure $(\tilde\varphi,\xi,\eta,\tilde g)$ via the Lie
derivative of $\varphi$, can be iterated. Indeed, Lemma
\ref{lemmaluigia} suggests that the Lie derivative of
$\tilde\varphi$ in the direction $\xi$ could define a compatible
almost contact or paracontact structure on $(M,\eta)$ provided that
the coefficient $1-\kappa-\left(1-\frac{\mu}{2}\right)^2$, which
directly brings up the invariant $I_M$, is positive or negative,
respectively. Furthermore, we show that this algorithm can be
applied also to the new contact and paracontact structures, so that
one can attach to $M$ a canonical sequence of contact and
paracontact metric structures, which strictly depends on the
invariant $I_M$ and hence on the class of $M$ according to the
classification recalled in $\S$ \ref{primasezione}. We start by
proving the following fundamental result.

\begin{theorem}\label{principale3}
Let $(M,\varphi,\xi,\eta,g)$ be a contact metric
$(\kappa,\mu)$-manifold and let $(\tilde\varphi,\xi,\eta,\tilde
g)$ be the canonical paracontact metric structure of $M$. Then
\begin{enumerate}
  \item[(i)] if $|I_M|<1$, the paracontact metric structure
$(\tilde\varphi,\xi,\eta,\tilde g)$ induces on $(M,\eta)$ a
canonical compatible contact metric $(\kappa_1,\mu_1)$-structure
$(\varphi_1,\xi,\eta,g_1)$, where
\begin{equation}\label{costanti1}
\kappa_1=\kappa+\left(1-\frac{\mu}{2}\right)^2, \  \ \mu_1=2;
\end{equation}
  \item[(ii)] if $|I_M|>1$, the paracontact metric structure
$(\tilde\varphi,\xi,\eta,\tilde g)$ induces  on $(M,\eta)$
 a canonical compatible paracontact metric $(\tilde\kappa_1,\tilde\mu_1)$-structure
$(\tilde\varphi_1,\xi,\eta,\tilde g_1)$, where
\begin{equation}\label{costanti2}
\tilde\kappa_1=\kappa-2+\left(1-\frac{\mu}{2}\right)^2, \  \
\tilde\mu_1=2.
\end{equation}
\end{enumerate}
\end{theorem}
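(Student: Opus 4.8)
The plan is to treat both cases by a single device. Set $c:=1-\kappa-\left(1-\frac{\mu}{2}\right)^2$ and note that $c=(1-\kappa)\bigl(1-I_M^2\bigr)$, so that $c>0$ is exactly case (i) ($|I_M|<1$) and $c<0$ is exactly case (ii) ($|I_M|>1$); everything then hinges on the second identity of Lemma~\ref{lemmaluigia}, $\tilde h^2=c\,\varphi^2$. I would define the new structure tensor as the normalized Lie derivative of $\tilde\varphi$, namely $\frac12\mathcal L_\xi\tilde\varphi=\tilde h$ suitably rescaled: $\varphi_1:=\frac{1}{\sqrt{c}}\tilde h$ in case (i) and $\tilde\varphi_1:=\frac{1}{\sqrt{-c}}\tilde h$ in case (ii). Then $\varphi_1^2=\frac1c\tilde h^2=\varphi^2=-I+\eta\otimes\xi$ and $\tilde\varphi_1^2=\frac{1}{-c}\tilde h^2=-\varphi^2=I-\eta\otimes\xi$, and since $\tilde h\xi=0$, $\eta\circ\tilde h=0$, each of $(\varphi_1,\xi,\eta)$, resp. $(\tilde\varphi_1,\xi,\eta)$, is an almost contact, resp. almost paracontact, structure. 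For the paracomplex condition in case (ii) I would observe that $\tilde\varphi_1$ anticommutes with $\tilde\varphi$ (because $\tilde h$ does) and that $\tilde\varphi$ is an isomorphism of $\mathcal D$; hence $\tilde\varphi$ interchanges the $\pm1$-eigendistributions of $\tilde\varphi_1$, which therefore have equal dimension $n$.

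Next I would introduce the metrics $g_1:=-d\eta(\cdot,\varphi_1\cdot)+\eta\otimes\eta$ (case i) and $\tilde g_1:=d\eta(\cdot,\tilde\varphi_1\cdot)+\eta\otimes\eta$ (case ii) and verify compatibility exactly as in the proof of Theorem~\ref{principale1}: symmetry from that of $\varphi\circ h$, the identities $d\eta(X,Y)=g_1(X,\varphi_1Y)$, resp. $d\eta(X,Y)=\tilde g_1(X,\tilde\varphi_1Y)$, and the norm relations. In case (ii) nondegeneracy yields a paracontact metric structure, whose signature is automatically $(n+1,n)$. In case (i) the one extra point is positive-definiteness of $g_1$: restricting to the eigendistributions $\mathcal D(\pm\lambda)$, on which $\tilde h$ is a scalar multiple of $\varphi$ with coefficient equal to half the Pang invariant of \eqref{invariante1}--\eqref{invariante2}, and using that $|I_M|<1$ places us in class (II), I would check that $g_1(X,X)$ has the correct sign on each eigendistribution.

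The heart of the iteration is the computation of the ``second Lie derivative''. Starting from the first formula of Lemma~\ref{lemmaluigia} and combining $\mathcal L_\xi\varphi=2h$ with the auxiliary identity $\mathcal L_\xi(\varphi h)=(\mu-2)h$ (which follows from $\nabla_\xi(\varphi h)=\mu h$, a specialization of \eqref{Blair3}, together with $\nabla\xi=-\varphi-\varphi h$), I would obtain the clean relation $\mathcal L_\xi\tilde h=2c\,\tilde\varphi$. Consequently the operators $h_1:=\frac12\mathcal L_\xi\varphi_1$ and $\tilde h_1:=\frac12\mathcal L_\xi\tilde\varphi_1$ satisfy $h_1=\sqrt{c}\,\tilde\varphi$ and $\tilde h_1=-\sqrt{-c}\,\tilde\varphi$, whence $h_1^2=-c\,\varphi^2=(\kappa_1-1)\varphi_1^2$ and $\tilde h_1^2=c\,\varphi^2=(1+\tilde\kappa_1)\tilde\varphi_1^2$. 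Reading these against $h^2=(\kappa-1)\varphi^2$ and against the paracontact identity \eqref{para1} already fixes $\kappa_1=\kappa+\left(1-\frac{\mu}{2}\right)^2$ and $\tilde\kappa_1=\kappa-2+\left(1-\frac{\mu}{2}\right)^2$.

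It remains to prove the full $\xi$-nullity relation and to extract $\mu_1=\tilde\mu_1=2$. For case (i) I would avoid a curvature computation: since $h_1=\sqrt{c}\,\tilde\varphi$ has the same eigendistributions $\mathcal D(\lambda),\mathcal D(-\lambda)$ as $h$, these remain $g_1$-orthogonal Legendre distributions, and the bi-Legendrian connection $\nabla^{bl}$ of $(\mathcal D(\lambda),\mathcal D(-\lambda))$ parallelizes $\eta,d\eta,\xi,\varphi_1,h_1$, and hence $g_1$ (as in the computation \eqref{formulapreliminare1}); by Theorem~\ref{principale0} this forces $(\varphi_1,\xi,\eta,g_1)$ to be a contact metric $(\kappa_1,\mu_1)$-manifold, and comparing the metric-independent Pang invariant \eqref{invariante1}--\eqref{invariante2} of $\mathcal D(\lambda)$ computed for $g$ and for $g_1$ gives $2\sqrt{1-\kappa_1}-\mu_1+2=2\sqrt{c}$, i.e. $\mu_1=2$. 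For case (ii), where no such characterization of paracontact $(\tilde\kappa,\tilde\mu)$-spaces is available, I would fall back on the direct method of Theorem~\ref{principale2}: relate the Levi-Civita connection $\tilde\nabla^{1}$ of $\tilde g_1$ to $\tilde\nabla$ by a Koszul computation of the type of Proposition~\ref{levicivita1}, feed in the covariant derivatives \eqref{formula1}--\eqref{formula2} of $\tilde\varphi$ and $\tilde h$ together with $\tilde h_1=-\sqrt{-c}\,\tilde\varphi$, and substitute the resulting analogues of \eqref{formula1}--\eqref{formula2} into the preliminary identity \eqref{formulapreliminare} for $\tilde R^{1}_{XY}\xi$. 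I expect the genuine obstacle to be exactly this last calculation, namely carrying the long covariant-derivative bookkeeping through while keeping the two signature regimes apart---$\tilde\nabla^{1}\xi=-\tilde\varphi_1+\tilde\varphi_1\tilde h_1$ with $\tilde\varphi_1^2=I-\eta\otimes\xi$ in the split case versus $\nabla^{1}\xi=-\varphi_1-\varphi_1h_1$ with $\varphi_1^2=-I+\eta\otimes\xi$ in the Riemannian case---since it is the interaction of these signs with the sign of $c$ that produces a contact structure in one case and a paracontact one in the other.
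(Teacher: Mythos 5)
Your proposal is correct and follows essentially the same route as the paper: the same normalizations $\varphi_1=\frac{1}{\sqrt{c}}\tilde h$, $\tilde\varphi_1=\frac{1}{\sqrt{-c}}\tilde h$ and associated metrics, with case (i) settled via the bi-Legendrian connection, Theorem \ref{principale0} and the Pang-invariant comparison, and case (ii) via the Levi Civita comparison of $\tilde\nabla^1$ with $\tilde\nabla$ and the curvature identity \eqref{formulapreliminare}. Your two local shortcuts --- the anticommutation argument (in place of the paper's explicit matrix and characteristic-polynomial computation) to show the eigendistributions of $\tilde\varphi_1|_{\mathcal D}$ both have dimension $n$, and the identity ${\mathcal L}_{\xi}(\varphi h)=(\mu-2)h$ giving ${\mathcal L}_{\xi}\tilde h=2c\,\tilde\varphi$ --- are valid simplifications of steps the paper carries out by direct computation.
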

\begin{proof}
(i) \ Let us assume that $|I_M|<1$. Notice that by Lemma
\ref{lemmaluigia}  $\tilde h^2$ is proportional to $\varphi^2$ and
the  constant of proportionality
$-\left(2-\mu\right)^2+4\left(1-\kappa\right)$ is positive since we
are assuming that $|I_M|<1$.  Then we set
\begin{align}\label{definzionephi}
\varphi_1:&=\frac{1}{\sqrt{1-\kappa-\left(1-\frac{\mu}{2}\right)^2}}
\tilde
h\\
&=\frac{1}{2\sqrt{(1-\kappa)\left(1-\kappa-\left(1-\frac{\mu}{2}\right)^2\right)}}((2-\mu)\varphi
\circ h+2(1-\kappa)\varphi).\nonumber
\end{align}
Due to \eqref{formule} we have that
$\varphi_1^2=\varphi^2=-I+\eta\otimes\xi$, hence
$(\varphi_1,\xi,\eta)$ is an almost contact structure on $M$. We
look forward a compatible Riemannian metric $g_1$ such that
$d\eta=g_1(\cdot,\varphi_1\cdot)$. Thus we set
\begin{equation}\label{metrica}
g_1(X,Y):=-d\eta(X,\varphi_1 Y)+\eta(X)\eta(Y).
\end{equation}
We first need to prove that $g_1$ is a Riemannian metric. For any
$X,Y\in\Gamma(TM)$, using the symmetry of the operator $\tilde h$
 with respect to $\tilde g$, we have
\begin{align*}
g_1(Y,X)&=-\frac{1}{\sqrt{1-\kappa-\left(1-\frac{\mu}{2}\right)^2}}
d\eta(Y,\tilde
h X)+\eta(Y)\eta(X)\\
&=-\frac{1}{\sqrt{1-\kappa-\left(1-\frac{\mu}{2}\right)^2}}\tilde
g(Y,\tilde\varphi
\tilde h X)+\eta(Y)\eta(X)\\
&=-\frac{1}{\sqrt{1-\kappa-\left(1-\frac{\mu}{2}\right)^2}}\tilde g(X,\tilde\varphi\tilde h Y)+\eta(X)\eta(Y)\\
&=-d\eta(X,\varphi_1 Y)+\eta(X)\eta(Y)\\
&=g_1(X,Y),
\end{align*}
so that $g_1$ is a symmetric tensor. \ Moreover, directly by
\eqref{metrica}, $d\eta(X,Y)=g_1(X,\varphi_1 Y)$ and $g_1(\varphi_1
X,\varphi_1 Y)=g_1(X,Y)-\eta(X)\eta(Y)$ for all $X,Y\in\Gamma(TM)$.
Now we look forward conditions ensuring the positive definiteness of
$g_1$. Let $X$ be a non-zero vector field on $M$ and put
$\alpha:=\frac{1}{2\sqrt{\left(1-\kappa\right)\left(1-\kappa-\left(1-\frac{\mu}{2}\right)^2\right)}}$.
Since $g(\xi,\xi)=\eta(\xi)\eta(\xi)=1>0$ we can assume that
$X\in\Gamma({\mathcal D})$. Then by \eqref{definzionephi} and
\eqref{metrica}
\begin{align}\label{confronto}
g_1(X,X)&=-\alpha(2-\mu)d\eta(X,\varphi h
X)-2\alpha(1-\kappa)d\eta(X,\varphi
X)\nonumber\\
&=\alpha(2-\mu)g(X,hX)+2\alpha(1-\kappa)g(X,X)\nonumber\\
&=\alpha(2-\mu)g(X_\lambda+X_{-\lambda},h(X_{\lambda}+X_{-\lambda}))+2\alpha(1-\kappa)g(X_\lambda+X_{-\lambda},X_\lambda+X_{-\lambda})\\
&=\alpha(2-\mu)g(X_\lambda+X_{-\lambda},\lambda X_{\lambda}-\lambda X_{-\lambda})+2\alpha(1-\kappa)g(X_\lambda+X_{-\lambda},X_\lambda+X_{-\lambda})\nonumber\\
&=\alpha\lambda(2\lambda-\mu+2)g(X_{\lambda},X_{\lambda})+\alpha\lambda(2\lambda+\mu-2)g(X_{-\lambda},X_{-\lambda}),\nonumber
\end{align}
where we have decomposed the vector field $X\in\Gamma({\mathcal D})$
into its components along ${\mathcal D}(\lambda)$ and ${\mathcal
D}(-\lambda)$, $\lambda=\sqrt{1-\kappa}$. Thus $g_1$ is a Riemannian
metric provided that  $2\lambda-\mu+2>0$ and $2\lambda+\mu-2>0$. In
view of \eqref{invariante1}--\eqref{invariante2}, the above
conditions are just equivalent to the positive definiteness of the
Legendre foliation ${\mathcal D}(\lambda)$ and to the negative
definiteness of ${\mathcal D}(-\lambda)$, and hence to the
requirement that $|I_M|<1$. Thus, as we are assuming that $|I_M|<1$,
we conclude that $g_1$ is a Riemannian metric. We now prove that
$(\varphi_1,\xi,\eta,g_1)$ is a contact metric
$(\kappa_1,\mu_1)$-structure, for some constants $\kappa_1$ and
$\mu_1$ to be found. For this purpose we firstly find a more
explicit expression of  the tensor field $h_1:=\frac{1}{2}{\mathcal
L}_{\xi}\varphi_1$. As before, set
$\alpha:=\frac{1}{2\sqrt{\left(1-\kappa\right)\left(1-\kappa-\left(1-\frac{\mu}{2}\right)^2\right)}}$.
Then taking  \eqref{paracanonical} and \eqref{formule} into account,
one has
\begin{align*}
h_1&=\frac{\alpha}{2}\left((2-\mu)\left(({\mathcal
L}_{\xi}\varphi)\circ h + \varphi\circ({\mathcal L}_{\xi}
h)\right)+2(1-\kappa){\mathcal L}_{\xi}\varphi\right)\\
&=\frac{\alpha}{2}\left((2-\mu)(2h^2+(2-\mu)\varphi^2\circ
h+2(1-\kappa)\varphi^2)+4(1-\kappa)h\right)\\
&=\frac{\alpha}{2}\left(-(2-\mu)^2+4(1-\kappa)\right)h\\
&=\sqrt{1-{I_M}^2}h.
\end{align*}
Thus $h_1$ is proportional to $h$ and hence it admits the
eigenvalues $\lambda_1$ and $-\lambda_1$, where
$\lambda_1:=\sqrt{(1-\kappa)(1-{I_M}^2)}=1-\kappa-(1-\frac{\mu}{2})^2$,
and the corresponding eigendistributions coincide with the
eigendistributions of the operator $h$.  Then the bi-Legendrian
connection associated with $({\mathcal D}(-\lambda_1),{\mathcal
D}(\lambda_1))$  coincides with the bi-Legendrian connection
$\nabla^{bl}$ associated with the bi-Legendrian structure
$({\mathcal D}(-\lambda),{\mathcal D}(\lambda))$ induced by $h$. We
prove that $\nabla^{bl}$ preserves the tensor fields $\varphi_1$.
Indeed for all $X,Y\in\Gamma(TM)$
\begin{align*}
(\nabla^{bl}_{X}\varphi_1)Y=\alpha(2-\mu)\left((\nabla^{bl}_{X}\varphi)h
Y+\varphi(\nabla^{bl}_{X}
h)Y\right)+2\alpha(1-\kappa)(\nabla^{bl}_{X}\varphi)Y=0
\end{align*}
since $\nabla^{bl}\varphi=0$ and $\nabla^{bl}h=0$. Moreover, as
$\nabla^{bl}\varphi_1=0$ and $\nabla^{bl}d\eta=0$, also
$\nabla^{bl}g_1=0$. Therefore, since obviously also
$\nabla^{bl}h_1=0$, $\nabla^{bl}$ satisfies all the conditions of
Theorem \ref{principale0} and we can conclude that
$(\varphi_1,\xi,\eta,g_1)$ is a contact metric
$(\kappa_1,\mu_1)$-structure. In order to find the expression of
$\kappa_1$ and $\mu_1$, we observe that, immediately,
$\kappa_1=1-\lambda_1^2=\kappa+(1-\kappa){I_{M}}^2=\kappa+\left(1-\frac{\mu}{2}\right)^2$.
Then applying \eqref{invariante1} and $\Pi_{{\mathcal
D}(\lambda)}=\Pi_{{\mathcal D}(\lambda_1)}$,  we have, for any non
zero $X\in\Gamma({\mathcal D}(\lambda))$,
$(2\sqrt{1-\kappa}-\mu+2)g(X,X)=(2\sqrt{1-\kappa_1}-\mu_1+2)g_1(X,X)$.
Using \eqref{confronto} we get
$2\sqrt{1-\kappa_1}-\mu_1+2=\sqrt{-(2-\mu)^2+4(1-\kappa)}$, so that
$\mu_1=2\sqrt{1-\kappa-\left(1-\frac{\mu}{2}\right)^2}+2-\sqrt{-(2-\mu)^2+4(1-\kappa)}=2$.\\
(ii) \ Now let us assume that $|I_M|>1$. Then we define
\begin{align}\label{definzioneparaphi}
\tilde\varphi_1:&=\frac{1}{\sqrt{\left(1-\frac{\mu}{2}\right)^2-(1-\kappa)}}\tilde
h\\
&=\frac{1}{2\sqrt{(1-\kappa)\left(\left(1-\frac{\mu}{2}\right)^2-(1-\kappa)\right)}}((2-\mu)\varphi\circ
h+2(1-\kappa)\varphi).\nonumber
\end{align}
Using \eqref{formule} and the assumption $|I_M|>1$, one easily
proves that $\tilde\varphi_1^2=I-\eta\otimes\xi$, so that in order
to conclude that $(\tilde\varphi_1,\xi,\eta)$ defines an almost
paracontact structure we need only to prove that the
eigendistributions corresponding to the eigenvalues $1$ and $-1$ of
$\tilde\varphi_1|_{\mathcal D}$ have equal dimension $n$. Notice
that thought $\tilde h$ is a symmetric operator (with respect to
$\tilde g$) it could be not necessarily diagonalizable, since
$\tilde g$ is not positive definite. Nevertheless we now show that
this is the case. Let $\{X_1,\ldots,X_n,Y_1,\ldots,Y_n,\xi\}$ be a
local orthonormal $\varphi$-basis of eigenvectors of $h$, namely
$X_i=-\varphi Y_i$, $Y_i=\varphi X_i$, $h X_i=\lambda X_i$, $h
Y_i=-\lambda Y_i$, $i\in\left\{1,\ldots,n\right\}$. Then, by
\eqref{formule}, for each $i\in\left\{1,\ldots,n\right\}$,
\begin{align*}
\tilde h X_i&=\frac{1}{2\sqrt{1-\kappa}}\left((2-\mu)\varphi h
X_i+2(1-\kappa)\varphi X_i\right)\\
&=\frac{1}{2\sqrt{1-\kappa}}\left((2-\mu)\lambda
Y_i+2(1-\kappa)Y_i\right)\\
&=\left(1-\frac{\mu}{2}+\sqrt{1-\kappa}\right)Y_i
\end{align*}
and, analogously, one finds  $\tilde h Y_i =
\left(1-\frac{\mu}{2}-\sqrt{1-\kappa}\right)X_i$. Hence $\tilde h$
is represented, with respect to the basis
$\{X_1,\ldots,X_n,Y_1,\ldots,Y_n,\xi\}$, by the matrix
\begin{equation*}
\left(
  \begin{array}{ccc}
    0_n & \left(1-\frac{\mu}{2}-\sqrt{1-\kappa}\right)I_n & \textbf{0}_{n 1} \\
    \left(1-\frac{\mu}{2}+\sqrt{1-\kappa}\right)I_n & \textbf{0}_n & \textbf{0}_{n 1} \\
    \textbf{0}_{1 n} & \textbf{0}_{1 n} & 0 \\
  \end{array}
\right),
\end{equation*}
where $\textbf{0}_n, \textbf{0}_{n 1}, \textbf{0}_{1 n}$ denote,
respectively, the $n\times n$, $n\times 1$ and $1\times n$ matrices
whose entries are all $0$, and $I_n$ the identity matrix of order
$n$. Therefore the characteristic polynomial is given by
\begin{equation*}
p=-\lambda\left(\lambda^2-\left(1-\frac{\mu}{2}+\sqrt{1-\kappa}\right)\left(1-\frac{\mu}{2}-\sqrt{1-\kappa}\right)\right)^n=-\lambda\left(\lambda^2-\left(\left(1-\frac{\mu}{2}\right)^2-(1-\kappa)\right)\right)^n.
\end{equation*}
Because of the assumption $|I_M|>1$, the number
$\left(1-\frac{\mu}{2}\right)^2-(1-\kappa)$ is  positive, so that
the operator $\tilde h$ admits, apart from the eigenvalue $0$
corresponding to the eigenvector $\xi$, also the eigenvalues
$\tilde\lambda$ and $-\tilde\lambda$, where
$\tilde\lambda:=\sqrt{\left(1-\frac{\mu}{2}\right)^2-(1-\kappa)}$.
An easy computation shows that the corresponding eigendistributions
are, respectively,
\begin{gather}\label{autospazio1}
{\mathcal
D}(\tilde\lambda)=\textrm{span}\left\{\sqrt\frac{I_M-1}{I_M+1}X_1+Y_1,\ldots,\sqrt\frac{I_M-1}{I_M+1}X_n+Y_n\right\},\\
{\mathcal
D}(-\tilde\lambda)=\textrm{span}\left\{-\sqrt\frac{I_M-1}{I_M+1}X_1+Y_1,\ldots,-\sqrt\frac{I_M-1}{I_M+1}X_n+Y_n\right\}.
\label{autospazio2}
\end{gather}
Therefore each eigendistribution ${\mathcal D}(\tilde\lambda)$ and
${\mathcal D}(-\tilde\lambda)$ has dimension $n$ and finally this
implies in turn that the eigendistributions of the operator
$\tilde\varphi_1$ restricted to $\mathcal D$ are $n$-dimensional.
Thus $(\tilde\varphi_1,\xi,\eta)$ is an almost paracontact
structure. Next we define a compatible semi-Riemannian metric by
putting, for any $X,Y\in\Gamma(TM)$,
\begin{equation}\label{parametrica}
\tilde g_1(X,Y):=d\eta(X,\tilde\varphi_1 Y)+\eta(X)\eta(Y).
\end{equation}
That $\tilde g_1$ is  symmetric can be easily proved. Moreover,
directly from \eqref{parametrica} one can show that, for all
$X,Y\in\Gamma(TM)$, $\tilde g_1(\tilde\varphi_1 X,\tilde\varphi_1
Y)=-g_1(X,Y)+\eta(X)\eta(Y)$ and $d\eta(X,Y)=\tilde
g_1(X,\tilde\varphi_1 Y)$. Therefore
$(\tilde\varphi_1,\xi,\eta,\tilde g_1)$ is a paracontact metric
structure on $M$. We notice also that, arguing as in the previous
case, one can find that
\begin{equation*}
\tilde
h_1=\frac{1}{4\sqrt{(1-\kappa)\left(\left(1-\frac{\mu}{2}\right)^2-4(1-\kappa)\right)}}\left((2-\mu){\mathcal
L}_{\xi}(\varphi\circ h)+2(1-\kappa){\mathcal
L}_{\xi}\varphi\right)=-\sqrt{{I_M}^2-1} h.
\end{equation*}
It remains to show that $(M,\tilde\varphi_1,\xi,\eta,\tilde g_1)$
verifies a $(\tilde\kappa_1,\tilde\mu_1)$-nullity condition, for
some constants $\tilde\kappa_1$ and $\tilde\mu_1$. For this purpose
we find the relationship between the Levi Civita connections
$\tilde\nabla$ and $\tilde\nabla^1$ of $\tilde g$ and $\tilde g_1$,
respectively. Notice that, by \eqref{parametrica},
\begin{align}\label{parametricabis}
\tilde
g_1(X,Y)=\frac{1}{\sqrt{\left(1-\frac{\mu}{2}\right)^2-\left(1-\kappa\right)}}d\eta(X,\tilde
h Y)+\eta(X)\eta(Y)=\beta\tilde g(X,\tilde\varphi\tilde h
Y)+\eta(X)\eta(Y),
\end{align}
where we have put
$\beta:=\frac{1}{\sqrt{\left(1-\frac{\mu}{2}\right)^2-\left(1-\kappa\right)}}$.
Then, arguing as in Proposition \ref{levicivita1}, we have, for all
$X,Y,Z\in\Gamma(TM)$,
\begin{align*}
2\tilde g_1(\tilde\nabla^1_{X}Y,Z)&=\beta\left(2\tilde
g(\tilde\varphi\tilde h \tilde\nabla_{X}Y,Z)+\tilde
g(Y,(\tilde\nabla_{X}\tilde\varphi\tilde h)Z)+\tilde
g(X,(\tilde\nabla_{Y}\tilde\varphi\tilde h)Z)-\tilde
g(X,(\tilde\nabla_{Z}\tilde\varphi\tilde h)Y)\right)\\
&\quad+2\left(d\eta(X,Z)\eta(Y)+d\eta(Y,Z)\eta(X)-d\eta(X,Y)\eta(Z)+X(\eta(Y))\eta(Z)\right).
\end{align*}
Using \eqref{formula1}, \eqref{formula2} and the identity
$(\tilde\nabla_{X}\tilde\varphi\tilde
h)Y=(\tilde\nabla_{X}\tilde\varphi)\tilde
hY+\tilde\varphi((\tilde\nabla_{X}\tilde h)Y)$, the previous
relation becomes
\begin{align}\label{relazione1}
2\tilde g_1(\tilde\nabla^1_{X}Y,Z)&=\beta\bigl(2\tilde
g(\tilde\varphi\tilde h\tilde\nabla_{X}Y,Z)-\eta(Y)\tilde g(X,\tilde
hZ)+\eta(Y)\tilde g(\tilde hX,\tilde hZ)-2\eta(X)\tilde
g(Y,\tilde\varphi^2\tilde h
Z)\nonumber\\
&\quad-\eta(Z)\tilde g(Y,\tilde\varphi^2\tilde h X)+\eta(Z)\tilde
g(Y,\tilde\varphi^2\tilde h^2 X)-\eta(X)\tilde g(Y,\tilde h
Z)+\eta(X)\tilde g(\tilde hY,\tilde hZ)\nonumber\\
&\quad-2\eta(Y)\tilde g(X,\tilde\varphi^2\tilde hZ)-\eta(Z)\tilde
g(X,\tilde\varphi^2\tilde h Y)+\eta(Z)\tilde
g(X,\tilde\varphi^2\tilde h^2 Y)+\eta(X)\tilde g(Z,\tilde hY)\\
&\quad-\eta(X)\tilde g(\tilde hZ,\tilde hY)+2\eta(Z)\tilde
g(X,\tilde\varphi^2\tilde hY)+\eta(Y)\tilde
g(X,\tilde\varphi^2\tilde h Z)-\eta(Y)\tilde
g(X,\tilde\varphi^2\tilde h^2 Z)\bigr)\nonumber\\
&\quad+2\left(d\eta(X,Z)\eta(Y)+d\eta(Y,Z)\eta(X)-d\eta(X,Y)\eta(Z)+X(\eta(Y))\eta(Z)\right).\nonumber
\end{align}
Notice that, by \eqref{valori} and \eqref{para1}, $\tilde
h^2=(1+\tilde\kappa)\tilde\varphi^2=\left(\kappa-1+\left(1-\frac{\mu}{2}\right)^2\right)\tilde\varphi^2=\frac{1}{\beta^2}\tilde\varphi^2$.
Substituting this relation in \eqref{relazione1} and taking the
symmetry of the operator $\tilde h$ with respect to the
semi-Riemannian metric $\tilde g$ into account, we get
\begin{align*}
2\tilde g_1(\tilde\nabla^1_{X}Y,Z)&=\beta\bigl(2\tilde
g(\tilde\varphi\tilde h\tilde\nabla_{X}Y,Z)-2\eta(X)\tilde g(\tilde
hY,Z)+\frac{2}{\beta^2}\tilde
g(X,Y)\eta(Z)-\frac{2}{\beta^2}\eta(X)\eta(Y)\eta(Z)\\
&\quad-2\eta(Y)\tilde g(\tilde
hX,Z)\bigl)+2\bigl(d\eta(X,Z)\eta(Y)+d\eta(Y,Z)\eta(X)-d\eta(X,Y)\eta(Z)\\
&\quad+X(\eta(Y))\eta(Z)\bigr),
\end{align*}
that is, by definition of $\tilde g_1$,
\begin{align}\label{relazione2}\nonumber
2\bigl(c\beta\tilde g(\tilde\nabla^{1}_{X}Y,\tilde\varphi\tilde h
Z)+\eta(\tilde\nabla^{1}_{X}Y)\tilde
g(\xi,Z)\bigr)&=\beta\bigl(2\tilde g(\tilde\varphi\tilde h
\tilde\nabla_{X}Y,Z)-2\eta(X)\tilde g(\tilde
hY,Z)+\frac{2}{\beta^2}\tilde g(X,Y)\tilde g(\xi,Z)\\
&\quad-\frac{2}{\beta^2}\eta(X)\eta(Y)\tilde g(\xi,Z)-2\eta(Y)\tilde
g(\tilde hX,Z)\bigr)\nonumber\\
&\quad+2\bigl(-\eta(Y)\tilde g(\tilde\varphi X,Z)-\eta(X)\tilde
g(\tilde\varphi Y,Z)-\tilde g(X,\tilde\varphi Y)\tilde g(\xi,Z)\\
&\quad+X(\eta(Y))\tilde g(\xi,Z)\bigr).\nonumber
\end{align}
Therefore, since $Z$ was chosen arbitrarily, we get
\begin{align}\label{relazione2bis}
\nonumber \beta\tilde\varphi\tilde
h\tilde\nabla^{1}_{X}Y+\eta(\tilde\nabla^{1}_{X}Y)\xi&=\beta\tilde\varphi\tilde
h\tilde\nabla_{X}Y-\beta\eta(X)\tilde hY+\frac{1}{\beta}\tilde
g(X,Y)\xi-\frac{1}{\beta}\eta(X)\eta(Y)\xi-\beta\eta(Y)\tilde h X\\
&\quad-\eta(Y)\tilde\varphi X-\eta(X)\tilde\varphi Y-\tilde
g(X,\tilde\varphi Y)\xi+X(\eta(Y))\xi.
\end{align}
Note that, since $\tilde\varphi_1=\beta\tilde h$, $\tilde
h_1=-\frac{1}{\beta}\tilde\varphi$ and $\tilde
h^2=\frac{1}{\beta^2}\tilde\varphi^2$,
\begin{align}\label{relazione3}
\eta(\tilde\nabla^{1}_{X}Y)&=\tilde g_1(\tilde\nabla^{1}_{X}Y,\xi)\nonumber\\
&=X(\tilde g_1(Y,\xi))-\tilde g_1(Y,\tilde\nabla^{1}_{X}\xi)\nonumber\\
&=X(\eta(Y))-\tilde g_1(Y,-\tilde\varphi_1 X+\tilde\varphi_1\tilde
h_1 X)\nonumber\\
&=X(\eta(Y))+d\eta(Y,X)-\tilde g_1(Y,\tilde\varphi\tilde h X)\\
&=X(\eta(Y))-\tilde g(X,\tilde\varphi Y)-\beta\tilde
g(Y,\tilde\varphi\tilde h\tilde\varphi\tilde h X)\nonumber\\
&=X(\eta(Y))-\tilde g(X,\tilde\varphi Y)+\frac{1}{\beta}\tilde
g(X,Y)-\frac{1}{\beta}\eta(X)\eta(Y).\nonumber
\end{align}
Consequently, \eqref{relazione2bis} becomes
\begin{equation*}
\tilde h \tilde\nabla^{1}_{X}Y=\tilde
h\tilde\nabla_{X}Y-\eta(X)\tilde\varphi\tilde h
Y-\eta(Y)\tilde\varphi\tilde h
X-\frac{1}{\beta}\eta(Y)\tilde\varphi^2
X-\frac{1}{\beta}\eta(X)\tilde\varphi^2 Y.
\end{equation*}
Applying $\tilde h$ we obtain
\begin{equation}\label{relazione4}
\tilde\nabla^{1}_{X}Y-\eta(\tilde\nabla^{1}_{X}Y)\xi=\tilde\nabla_{X}Y-\eta(\tilde\nabla_{X}Y)\xi+\eta(X)\tilde\varphi
Y +\eta(Y)\tilde\varphi X-\beta\eta(Y)\tilde hX-\beta\eta(X)\tilde h
Y.
\end{equation}
Now, a straightforward computation as in \eqref{relazione3} shows
that
\begin{equation}\label{relazione5}
\eta(\tilde\nabla_{X}Y)=X(\eta(Y))-\tilde g(X,\tilde\varphi
Y)-\tilde g(X,\tilde\varphi\tilde h Y).
\end{equation}
Therefore, by replacing \eqref{relazione3} and  \eqref{relazione5}
in \eqref{relazione4} and recalling that
$\beta=\frac{1}{\sqrt{\left(1-\frac{\mu}{2}\right)^2-\left(1-\kappa\right)}}$,
we finally find
\begin{align}\label{relazionefinale}
\tilde\nabla^{1}_{X}Y&=\tilde\nabla_{X}Y+\eta(X)\left(\tilde\varphi
Y-\frac{\tilde h
Y}{\sqrt{\left(1-\frac{\mu}{2}\right)^2-\left(1-\kappa\right)}}\right)+\eta(Y)\left(\tilde\varphi
X-\frac{\tilde h
X}{\sqrt{\left(1-\frac{\mu}{2}\right)^2-\left(1-\kappa\right)}}\right)\nonumber\\
&\quad+\left(\sqrt{\left(1-\frac{\mu}{2}\right)^2-\left(1-\kappa\right)}\bigl(\tilde
g(X,Y)-\eta(X)\eta(Y)\bigr)+\tilde g (X,\tilde\varphi\tilde h
Y)\right)\xi.
\end{align}
The explicit expression \eqref{relazionefinale} of the Levi Civita
connection of $\tilde g_1$ in terms of the Levi Civita connection of
$\tilde g$ allows us to prove that
$(M,\tilde\varphi_1,\xi,\eta,\tilde g_1)$ is a paracontact metric
$(\tilde\kappa_1,\tilde\mu_1)$-manifold, for some $\tilde\kappa_1,
\tilde\mu_1 \in \mathbb{R}$. Indeed, from \eqref{relazionefinale},
after some long but straightforward computations, we obtain
\begin{align}\label{relazione6}
(\tilde\nabla^{1}_{X}\tilde\varphi_1)Y&=\left(-\frac{1}{\sqrt{\left(1-\frac{\mu}{2}\right)^2-\left(1-\kappa\right)}}\tilde
g(X,\tilde\varphi\tilde h Y)-\eta(X)\eta(Y)+\tilde g(X,\tilde h
Y)\right)\xi\nonumber\\
&\quad
+\eta(Y)\left(X+\sqrt{\left(1-\frac{\mu}{2}\right)^2-\left(1-\kappa\right)}\tilde\varphi
X\right)\nonumber\\
&=-\tilde g_1(X-\tilde h_1 X,Y)\xi+\eta(Y)(X-\tilde h_1 X),
\end{align}
and
\begin{align}\label{relazione7}
(\tilde\nabla^{1}_{X}\tilde
h_1)Y&=\sqrt{\left(1-\frac{\mu}{2}\right)^2-\left(1-\kappa\right)}\eta(Y)\tilde
h X - 2\eta(X)\tilde\varphi\tilde h Y - \eta(Y)\tilde\varphi\tilde h
X\nonumber\\
&+\sqrt{\left(1-\frac{\mu}{2}\right)^2-\left(1-\kappa\right)}\left(\tilde
g(X,Y)-\eta(X)\eta(Y)-\sqrt{\left(1-\frac{\mu}{2}\right)^2-\left(1-\kappa\right)}\tilde
g(X,\tilde\varphi Y)\right)\xi\nonumber\\
&=-\eta(Y)(\tilde\varphi_1\tilde h_1 X-\tilde\varphi_1\tilde h_1^2
X)-2\eta(X)\tilde\varphi_1\tilde h_1 Y-\tilde
g_1(X,\tilde\varphi_1\tilde h_1 Y+\tilde\varphi_1\tilde h_1^2 Y)\xi.
\end{align}
Then by \eqref{formulapreliminare}, \eqref{relazione6} and
\eqref{relazione7}, and since $\tilde
h_1^2=\left(\left(1-\frac{\mu}{2}\right)^2-(1-\kappa)\right)\tilde\varphi^2$,
we get
\begin{align*}
\tilde R^{1}_{X
Y}\xi&=-(\tilde\nabla^{1}_{X}\tilde\varphi_1)Y+(\tilde\nabla^{1}_{Y}\tilde\varphi_1)X+(\tilde\nabla^{1}_{X}\tilde\varphi_1)\tilde
h Y+\tilde\varphi_1((\tilde\nabla^{1}_{X}\tilde
h)Y)-(\tilde\nabla^{1}_{Y}\tilde\varphi_1)\tilde h_1
X-\tilde\varphi_1((\tilde\nabla^{1}_{Y}\tilde h_1)X)\\
&=-\eta(Y)X+\eta(X)Y+\eta(Y)\tilde h_1^2 X-\eta(X)\tilde h_1^2
Y-2\eta(X)\tilde h_1 Y+2\eta(Y)\tilde h_1 X\\
&=-\eta(Y)X+\eta(X)Y+
\left(\left(1-\frac{\mu}{2}\right)^2-(1-\kappa)\right)\left(\eta(Y)\tilde\varphi^2
X-\eta(X)\tilde\varphi^2 Y\right)\\
&\quad-2\eta(X)\tilde h_1
Y+2\eta(Y)\tilde h_1 X\\
&=\left(\kappa-2+\left(1-\frac{\mu}{2}\right)^2\right)\bigl(\eta(Y)X-\eta(X)Y\bigr)+2\bigl(\eta(Y)\tilde
h_1 X-\eta(X)\tilde h_1 Y\bigr).
\end{align*}
Thus $(\tilde\varphi_1,\xi,\eta,\tilde g_1)$ is paracontact metric
$(\tilde\kappa_1,\tilde\mu_1)$-structure with
$\tilde\kappa_1=\kappa-2+\left(1-\frac{\mu}{2}\right)^2$ and
$\tilde\mu_1=2$.
\end{proof}

We  recall the definition of Tanaka-Webster parallel space, recently
introduced by Boeckx and Cho (\cite{Boeckx-08}). A contact metric
manifold is a \emph{Tanaka-Webster parallel space} if its
generalized Tanaka-Webster torsion  $\hat{T}$ and  curvature
$\hat{R}$ satisfy $\hat{\nabla} \hat{T}=0$ and $\hat{\nabla}
\hat{R}=0$, that is the Tanaka-Webster connection $\hat{\nabla}$ is
invariant by parallelism (in the sense of \cite{kobayashi1}). Boeckx
and Cho have proven that a contact metric manifold $M$ is a
Tanaka-Webster parallel space if and only if $M$ is a Sasakian
locally $\varphi$-symmetric space or a non-Sasakian
$(\kappa,2)$-space (\cite[Theorem 12]{Boeckx-08}). Thus, in
particular, we deduce that the contact metric
$(\kappa_1,\mu_1)$-structure $(\varphi_1,\xi,\eta,g_1)$ in (i) of
Theorem \ref{principale3} is in fact a Tanaka-Webster parallel
structure. Therefore we have proven the following corollary.

\begin{corollary}\label{tanakaparallel}
Every non-Sasakian contact metric $(\kappa,\mu)$-manifold
$(M,\varphi,\xi,\eta,g)$ such that $|I_M|<1$ admits a compatible
Tanaka-Webster parallel structure.
\end{corollary}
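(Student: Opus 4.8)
The plan is to read off the conclusion directly from Theorem \ref{principale3}(i) together with the classification of Tanaka-Webster parallel spaces due to Boeckx and Cho. First I would invoke Theorem \ref{principale3}(i): since $|I_M|<1$, the canonical paracontact metric structure $(\tilde\varphi,\xi,\eta,\tilde g)$ induces on $(M,\eta)$ a compatible contact metric $(\kappa_1,\mu_1)$-structure $(\varphi_1,\xi,\eta,g_1)$ with $\kappa_1=\kappa+\left(1-\frac{\mu}{2}\right)^2$ and $\mu_1=2$. This already produces a compatible contact metric structure whose second nullity parameter equals $2$, which is precisely the shape appearing in the Boeckx--Cho classification.

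The next step is to check that $(\varphi_1,\xi,\eta,g_1)$ is genuinely non-Sasakian, so that the relevant branch of the Boeckx--Cho theorem applies. By definition the hypothesis $|I_M|<1$ means $\left(1-\frac{\mu}{2}\right)^2<1-\kappa$, whence $\kappa_1=\kappa+\left(1-\frac{\mu}{2}\right)^2<1$. By Theorem \ref{teoremagreci}, a contact metric $(\kappa_1,\mu_1)$-structure with $\kappa_1<1$ is automatically non-Sasakian. Thus $(\varphi_1,\xi,\eta,g_1)$ is a non-Sasakian contact metric $(\kappa_1,2)$-structure.

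Finally I would apply the Boeckx--Cho result (\cite[Theorem 12]{Boeckx-08}), according to which a contact metric manifold is Tanaka-Webster parallel if and only if it is either a Sasakian locally $\varphi$-symmetric space or a non-Sasakian $(\kappa,2)$-space. Since $(\varphi_1,\xi,\eta,g_1)$ falls into the second class, it is a Tanaka-Webster parallel structure compatible with the contact form $\eta$, which is exactly the assertion of the corollary.

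As for difficulty, there is essentially no hard analytic step left: all the substantive computation has already been carried out in Theorem \ref{principale3}, where the constants $\kappa_1$ and $\mu_1$ were determined, and the deep input is the external classification of \cite{Boeckx-08}. The only point requiring care is the verification that the induced structure is non-Sasakian, and this is the one place where the hypothesis $|I_M|<1$ is genuinely used, since it is exactly the inequality guaranteeing $\kappa_1<1$ and hence placing $(\varphi_1,\xi,\eta,g_1)$ in the non-Sasakian branch of the classification.
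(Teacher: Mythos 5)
Your proof is correct and follows essentially the same route as the paper: invoke Theorem \ref{principale3}(i) to obtain the compatible contact metric $(\kappa_1,\mu_1)$-structure with $\mu_1=2$, then apply the Boeckx--Cho classification (\cite[Theorem 12]{Boeckx-08}). Your explicit check that $|I_M|<1$ forces $\kappa_1=\kappa+\left(1-\frac{\mu}{2}\right)^2<1$, so that the induced structure is non-Sasakian and lands in the correct branch of that classification, is a point the paper leaves implicit, and it is a worthwhile addition rather than a deviation.
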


\begin{remark}\label{osservazione1}
We point out that in the proof of Theorem \ref{principale3} we have
proved that, even if the metric $\tilde g$ is not positive definite,
in the case $|I_M|>1$ the operator $\tilde h$ is diagonalizable and
admits the eigenvalue $0$ of multiplicity $1$ and the eigenvalues
$\tilde\lambda$ and $-\tilde\lambda$, where
$\tilde\lambda=\sqrt{\left(1-\frac{\mu}{2}\right)^2-(1-\kappa)}$,
both of multiplicity $n$. The explicit expressions of the
eigendistributions ${\mathcal D}(\tilde\lambda)$ and ${\mathcal
D}(-\tilde\lambda)$ in terms of a local $\varphi$-basis of
eigenvectors of $h$, is given by the relations
\eqref{autospazio1}--\eqref{autospazio2}. We now show that
${\mathcal D}(\tilde\lambda)$ and ${\mathcal D}(-\tilde\lambda)$ are
in fact Legendre foliations. Indeed, for any
$X,X'\in\Gamma({\mathcal D}(\tilde\lambda))$ we have
\begin{align*}
\tilde g(X,\tilde\varphi X')=\frac{1}{\tilde\lambda}\tilde
g(X,\tilde\varphi\tilde h X')=-\frac{1}{\tilde\lambda}\tilde
g(X,\tilde h\tilde\varphi X')=-\frac{1}{\tilde\lambda}\tilde
g(\tilde hX,\tilde\varphi X')=-\tilde g(X,\tilde\varphi X'),
\end{align*}
so that $\tilde g(X,\tilde\varphi X')=0$ and, consequently,
$d\eta(X,X')=0$. Analogously, for any $Y,Y'\in\Gamma({\mathcal
D}(-\tilde\lambda))$, $d\eta(Y,Y')=0$. This proves that ${\mathcal
D}(\tilde\lambda)$ and ${\mathcal D}(-\tilde\lambda)$ are Legendre
distributions. Now, observe that the almost bi-Legendrian structure
given by ${\mathcal D}(\tilde\lambda)$ and ${\mathcal
D}(-\tilde\lambda)$, by definition of $\tilde\varphi_1$, coincides
with the almost bi-Legendrian structure induced by the paracontact
metric structure $(\tilde\varphi_1,\xi,\eta,\tilde g_1)$ in Theorem
\ref{principale3}, which is integrable because of \eqref{relazione6}
and Theorem \ref{integrability}. Thus $[X,X']\in\Gamma({\mathcal
D}(\tilde\lambda)\oplus\mathbb{R}\xi)$ for all
$X,X'\in\Gamma({\mathcal D}(\tilde\lambda))$ and
$[Y,Y']\in\Gamma({\mathcal D}(-\tilde\lambda)\oplus\mathbb{R}\xi)$
for all $Y,Y'\in\Gamma({\mathcal D}(-\tilde\lambda))$. On the other
hand, since ${\mathcal D}(\tilde\lambda)$ and ${\mathcal
D}(-\tilde\lambda)$ are Legendre distributions, we have that
$\eta([X,X'])=X(\eta(X'))-X'(\eta(X))-2d\eta(X,X')=0$ and
$\eta([Y,Y'])=0$, so that $[X,X']\in\Gamma({\mathcal D})$ and
$[Y,Y']\in\Gamma({\mathcal D})$ for all $X,X'\in\Gamma({\mathcal
D}(\tilde\lambda))$, $Y,Y'\in\Gamma({\mathcal D}(-\tilde\lambda))$.
Hence we conclude that  ${\mathcal D}(\tilde\lambda)$ and ${\mathcal
D}(-\tilde\lambda)$ are involutive.
\end{remark}

Therefore, any contact metric $(\kappa,\mu)$-manifold
$(M,\varphi,\xi,\eta,g)$ with $|I_M|>1$ admits a supplementary
bi-Legendrian structure, given by the eigendistributions of the
operator $\tilde h$ of the canonical paracontact metric structure
$(\tilde\varphi,\xi,\eta,\tilde g)$ induced by
$(\varphi,\xi,\eta,g)$. But the surprising fact is that such
bi-Legendrian structure $({\mathcal D}(\tilde\lambda),{\mathcal
D}(-\tilde\lambda))$ comes from a (new) contact metric
$(\kappa',\mu')$-structure, as we now prove.

\begin{theorem}\label{legendre3}
Let $(M,\varphi,\xi,\eta,g)$ be a contact metric
$(\kappa,\mu)$-manifold such that $|I_M|>1$ and let
$(\tilde\varphi,\xi,\eta,\tilde g)$ be the canonical paracontact
metric structure induced on $M$. Then the operator $\tilde
h:=\frac{1}{2}{\mathcal L}_{\xi}\tilde\varphi$ is diagonalizable and
admits the eigenvalues $0$ of multiplicity $1$ and
$\pm\tilde\lambda$ of multiplicity $n$, where
$\tilde\lambda:=\sqrt{\left(1-\frac{\mu}{2}\right)^2-(1-\kappa)}$.
Moreover, denoting by ${\mathcal D}(\tilde\lambda)$ and ${\mathcal
D}(-\tilde\lambda)$, the eigendistributions corresponding to
$\tilde\lambda$ and $-\tilde\lambda$, respectively, there exists a
family of compatible contact metric
$(\kappa'_{a,b},\mu'_{a,b})$-structures
$(\varphi'_{a,b},\xi,\eta,g'_{a,b})$ whose associated bi-Legendrian
structure coincides with $({\mathcal D}(\tilde\lambda),{\mathcal
D}(-\tilde\lambda))$, where
\begin{equation}\label{costanti3}
\kappa'_{a,b}=1-\frac{(a-b)^2}{16}, \ \ \
\mu'_{a,b}=2-\frac{a+b}{2},
\end{equation}
$a$ and $b$ being any two positive real numbers such that
\begin{equation}\label{costante}
ab=\frac{1}{4}\left(\left(1-\frac{\mu}{2}\right)^2-(1-\kappa)\right).
\end{equation}
Furthermore,  the Boeckx invariant of
$(M,\varphi'_{a,b},\xi,\eta,g'_{a,b})$ has absolute value strictly
greater than $1$, so that $(\varphi'_{a,b},\xi,\eta,g'_{a,b})$
belongs to the same class as $(\varphi,\xi,\eta,g)$, according to
the classification in $\S$ \emph{\ref{primasezione}}.
\end{theorem}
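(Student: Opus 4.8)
The plan is to read the diagonalizability of $\tilde h$ off from the work already done, and then to realize the bi-Legendrian structure $({\mathcal D}(\tilde\lambda),{\mathcal D}(-\tilde\lambda))$ as the one canonically attached to a contact metric $(\kappa',\mu')$-structure by inverting the correspondence of Theorem \ref{legendre1}. The assertion on $\tilde h$ is precisely what was proved in part (ii) of Theorem \ref{principale3}: when $|I_M|>1$ the characteristic polynomial of $\tilde h$, computed in a local orthonormal $\varphi$-basis of eigenvectors of $h$, shows that $\tilde h$ is diagonalizable with eigenvalue $0$ on $\mathbb R\xi$ and $\pm\tilde\lambda$ each of multiplicity $n$, the eigendistributions being given explicitly by \eqref{autospazio1}--\eqref{autospazio2}. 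By Remark \ref{osservazione1} these eigendistributions ${\mathcal D}(\pm\tilde\lambda)$ are mutually transversal, integrable Legendre distributions, so $({\mathcal D}(\tilde\lambda),{\mathcal D}(-\tilde\lambda))$ is a genuine bi-Legendrian structure on $(M,\eta)$, and it only remains to produce the contact metric structures inducing it.

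For this I would apply Theorem \ref{legendre1} to $({\mathcal F}_1,{\mathcal F}_2):=({\mathcal D}(\tilde\lambda),{\mathcal D}(-\tilde\lambda))$. Its hypotheses require that $\nabla^{bl}\Pi_{{\mathcal F}_1}=\nabla^{bl}\Pi_{{\mathcal F}_2}=0$ for the bi-Legendrian connection of this new structure, that the two Pang invariants share a common sign (so as to land in case (I) or (III)), and that $\overline\Pi_{{\mathcal F}_1}=ab\,\overline\Pi_{{\mathcal F}_2}$ on $T{\mathcal F}_1$ (and symmetrically on $T{\mathcal F}_2$) for a suitable constant. The integrability needed for the first condition is available from Remark \ref{osservazione1}, and I would verify $\nabla^{bl}\Pi=0$ analogously to the remark following Theorem \ref{principale0}, using that $\nabla^{bl}$ preserves each ${\mathcal D}(\pm\tilde\lambda)$ and parallelizes $\xi$ and $d\eta$ (Theorem \ref{biconnection}), together with the proportionality of each $\Pi_{{\mathcal D}(\pm\tilde\lambda)}$ to the restriction of $g$ obtained below.

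The heart of the argument is the explicit evaluation of the two Pang invariants. Writing $Z_i=\sqrt{\tfrac{I_M-1}{I_M+1}}\,X_i+Y_i$ and $W_i=-\sqrt{\tfrac{I_M-1}{I_M+1}}\,X_i+Y_i$ for the bases \eqref{autospazio1}--\eqref{autospazio2}, I would compute $\Pi_{{\mathcal F}}(U,U')=2d\eta([\xi,U],U')$ by expanding $[\xi,Z_i]$ and $[\xi,W_i]$ through $\nabla\xi=-\varphi-\varphi h$ and $\nabla_\xi\varphi=0$. The decisive simplification is that the coefficients $g(\nabla_\xi X_i,X_j)$ and $g(\nabla_\xi Y_i,Y_j)$ coincide (since $Y_i=\varphi X_i$ and $\nabla_\xi\varphi=0$) and are antisymmetric in $i,j$, so that all such terms cancel and each invariant comes out proportional to $g$ on the respective distribution, with factor controlled by $1-\tfrac{\mu}{2}-\sqrt{1-\kappa}$. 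Substituting these into the definition of $\Lambda_{{\mathcal F}_2}$ and of the extended form $\overline\Pi$ then yields the proportionality $\overline\Pi_{{\mathcal F}_1}=ab\,\overline\Pi_{{\mathcal F}_2}$ with the product $ab$ equal to the value \eqref{costante}, while the common sign of the invariants is that of $1-\tfrac{\mu}{2}-\sqrt{1-\kappa}$; hence case (I) of Theorem \ref{legendre1} applies when $I_M>1$ (with $a,b>0$, as stated) and case (III) when $I_M<-1$. The resulting constants are thereby forced to be \eqref{costanti3}, and the freedom to split the fixed product $ab$ into individual factors is exactly what produces the announced \emph{family}. I expect this bookkeeping --- the cancellation of the connection terms and the correct normalization entering $\overline\Pi$ through $\Lambda_{{\mathcal F}_2}$ --- to be the main technical obstacle.

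Finally, for any admissible $a,b$ the Boeckx invariant of $(\varphi'_{a,b},\xi,\eta,g'_{a,b})$ is $I'_M=\frac{1-\mu'_{a,b}/2}{\sqrt{1-\kappa'_{a,b}}}=\frac{a+b}{|a-b|}$, and since $(a+b)^2-(a-b)^2=4ab>0$ one obtains $|I'_M|>1$; as the sign of $I'_M$ agrees with that of $I_M$, the structure $(\varphi'_{a,b},\xi,\eta,g'_{a,b})$ lies in the same class of the classification of $\S$\ref{primasezione} as $(\varphi,\xi,\eta,g)$, completing the proof.
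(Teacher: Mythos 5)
Your overall strategy coincides with the paper's: reduce everything to Theorem \ref{legendre1} applied to $({\mathcal D}(\tilde\lambda),{\mathcal D}(-\tilde\lambda))$, after reading diagonalizability and integrability off Theorem \ref{principale3} and Remark \ref{osservazione1}, then computing the Pang invariants, the Libermann operators and the extended forms, and finally the Boeckx invariant $\frac{a+b}{|a-b|}$. But there is a genuine gap at the step $\nabla'^{bl}\Pi_{{\mathcal D}(\pm\tilde\lambda)}=0$, where $\nabla'^{bl}$ is the bi-Legendrian connection of the \emph{new} structure. You propose to argue ``analogously to the remark following Theorem \ref{principale0}'', using the proportionality $\Pi_{{\mathcal D}(\pm\tilde\lambda)}\propto g$ together with the properties listed in Theorem \ref{biconnection} (preservation of the distributions, $\nabla'^{bl}\xi=0$, $\nabla'^{bl}d\eta=0$). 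That remark, however, also uses $\nabla^{bl}g=0$, which for the original structure is part of Theorem \ref{principale0}(ii); for $({\mathcal D}(\tilde\lambda),{\mathcal D}(-\tilde\lambda))$ no such metric compatibility is available a priori: Theorem \ref{biconnection} contains no metric condition, a bi-Legendrian connection is in general not metric (that is precisely the content of Theorem \ref{lemmarocky}), and Theorem \ref{lemmarocky} itself does not apply here because ${\mathcal D}(\tilde\lambda)$ and ${\mathcal D}(-\tilde\lambda)$ are neither $g$-orthogonal nor conjugate under $\varphi$ (one checks $g(Z_i,W_j)=(1-\gamma^2)\delta_{ij}\neq 0$ and $\varphi Z_i\notin{\mathcal D}(-\tilde\lambda)$). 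This is exactly where the paper works hardest: it identifies $\nabla'^{bl}$ with the canonical paracontact connection of the integrable structure $(\tilde\varphi_1,\xi,\eta,\tilde g_1)$ via Theorem \ref{connection}, writes $\Pi_{{\mathcal D}(\pm\tilde\lambda)}(X,X')=2\tilde g_1(\tilde h_1X,X')$, proves $\nabla'^{bl}\tilde h_1=0$ from \eqref{paradefinition} and \eqref{relazione7}, and concludes using $\nabla'^{bl}\tilde g_1=0$. Without this identification (or an equivalent proof that $\nabla'^{bl}$ parallelizes $h$, or $g$ along the distributions), your verification does not go through.

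A second, smaller defect concerns your direct computation of the Pang invariants through $\nabla\xi=-\varphi-\varphi h$: it is not true that all connection terms cancel. The antisymmetric coefficients $g(\nabla_\xi X_i,X_j)=g(\nabla_\xi Y_i,Y_j)$ do cancel, but the symmetric cross terms $g(\nabla_\xi X_i,Y_j)$ survive; by the identity $\nabla_\xi h=\mu h\varphi$ they equal $-\frac{\mu}{2}\delta_{ij}$, and they are precisely what produces the $\mu$-dependence of the answer. Writing $p=1-\frac{\mu}{2}$ and $\lambda=\sqrt{1-\kappa}$, dropping them would give the factor $\frac{4(p-\lambda^2)}{p+\lambda}$ instead of the correct $4(p-\lambda)$, so the mechanism you describe would not produce the factor $1-\frac{\mu}{2}-\sqrt{1-\kappa}$ that you (correctly) announce. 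On the positive side, your case distinction --- case (I) of Theorem \ref{legendre1} with $a,b>0$ when $I_M>1$, case (III) with $a,b<0$ when $I_M<-1$ --- is actually more careful than the paper's own treatment, which asserts that both invariants are positive definite irrespective of the sign of $I_M$ and takes $a,b>0$ throughout.
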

\begin{proof}
The first part of the theorem has been already proven in Theorem
\ref{principale3} and Remark \ref{osservazione1}. The remaining part
of the proof consists in  showing that the bi-Legendrian structure
$({\mathcal D}(-\tilde\lambda),{\mathcal D}(\tilde\lambda))$
verifies the hypotheses of Theorem \ref{legendre1}. First we find
the expression of the invariants $\Pi_{{\mathcal D}(\tilde\lambda)}$
and $\Pi_{{\mathcal D}(-\tilde\lambda)}$. For any
$X,X'\in\Gamma({\mathcal D}(\tilde\lambda))$ we have
\begin{equation*}
\Pi_{{\mathcal D}(\tilde\lambda)}(X,X')=2d\eta([\xi,X],X')=2\tilde
g_1([\xi,X],\tilde\varphi_1 X')=2\tilde g_1([\xi,X], X')=2\tilde
g_1(\tilde h_1 X,X'),
\end{equation*}
and, analogously, for any $Y,Y'\in\Gamma({\mathcal
D}(-\tilde\lambda))$,
\begin{equation*}
\Pi_{{\mathcal D}(-\tilde\lambda)}(Y,Y')=2d\eta([\xi,Y],Y')=2\tilde
g_1([\xi,Y],\tilde\varphi_1 Y')=-2\tilde g_1([\xi,Y],Y')=2\tilde
g_1(\tilde h_1 Y,Y'),
\end{equation*}
where we used the easy relations $\tilde h_1 X = [\xi,X]_{{\mathcal
D}(-\tilde\lambda)}$ and $\tilde h_1 Y = -[\xi,Y]_{{\mathcal
D}(\tilde\lambda)}$, for any $X\in\Gamma({\mathcal
D}(\tilde\lambda))$ and $Y\in\Gamma({\mathcal D}(-\tilde\lambda))$.
We prove that $\nabla^{{\prime}{bl}}\Pi_{{\mathcal
D}(\tilde\lambda)}=\nabla^{{\prime}{bl}}\Pi_{{\mathcal
D}(-\tilde\lambda)}=0$, where $\nabla^{{\prime}{bl}}$ denotes the
bi-Legendrian connection associated to the bi-Legendrian structure
$({\mathcal D}(-\tilde\lambda),{\mathcal D}(\tilde\lambda))$.
Indeed, notice that, by Theorem \ref{connection} and the
integrability of $(\tilde\varphi_1,\xi,\eta,\tilde g_1)$,
$\nabla^{{\prime}{bl}}$ coincides with the canonical paracontact
connection $\tilde\nabla^{{1}{pc}}$ of
$(M,\tilde\varphi_1,\xi,\eta,\tilde g_1)$. In particular, by
\eqref{paradefinition} and \eqref{relazione7}, we have for any
$X,Y\in\Gamma(TM)$,
\begin{align*}
(\nabla^{{\prime}{bl}}_{X}\tilde
h_1)Y&=(\tilde\nabla^{{1}{pc}}_{X}\tilde h_1)Y\\
&=(\tilde\nabla^{1}_{X}\tilde h_1)Y+\eta(X)\tilde\varphi_1\tilde h_1
Y+\tilde g_1(X-\tilde h_1 X,\tilde\varphi_1\tilde h_1
Y)\xi-\eta(Y)\tilde h_1\tilde\varphi_1
Y\\
&\quad+\eta(Y)(\tilde\varphi_1\tilde h_1 X-\tilde\varphi_1\tilde
h^2_1 X)=0,
\end{align*}
where $\tilde\nabla^{1}$ denotes the Levi Civita connection of
$(M,\tilde g_1)$. Consequently, for any $X,X'\in\Gamma({\mathcal
D}(\tilde\lambda))$ and $Z\in\Gamma(TM)$,
\begin{align*}
(\nabla'^{bl}_{Z}\Pi_{{\mathcal D}(\tilde\lambda)})(X,X')&=2Z(\tilde
g_1(\tilde h_1 X,X'))-2\tilde g_1(\tilde
h_1\nabla'^{bl}_{Z}X,X')-2\tilde g_1(\tilde h_1
X,\nabla'^{bl}_{Z}X')\\
&=2\bigl(Z(\tilde g_1(\tilde h_1 X,X'))-\tilde
g_1(\nabla'^{bl}_{Z}\tilde h_1 X,X')-\tilde g_1(\tilde h_1
X,\nabla'^{bl}_{Z}X')\bigr)\\
&=2(\nabla'^{bl}_{Z}\tilde g_1)(\tilde h_1 X,X')\\
&=2(\tilde\nabla^{1 pc}_{Z}\tilde g_1)(\tilde h_1 X,X')=0.
\end{align*}
In a similar way one can prove that
$\nabla^{{\prime}{bl}}\Pi_{{\mathcal D}(-\tilde\lambda)}=0$. Next,
we check whether ${\mathcal D}(\tilde\lambda)$ and ${\mathcal
D}(-\tilde\lambda)$ are positive definite or negative definite
Legendre foliations, according to the assumptions of Theorem
\ref{legendre1}. We consider the local $g$-orthonormal bases for
${\mathcal D}(\tilde\lambda)$ and ${\mathcal D}(-\tilde\lambda)$ in
\eqref{autospazio1} and \eqref{autospazio2}, respectively. As in the
proof of Theorem \ref{principale3}, for simplifying the notation, we
put
$\beta:=\frac{1}{\sqrt{\left(1-\frac{\mu}{2}\right)-(1-\kappa)}}$.
Notice that, for any $i,j\in\left\{1,\ldots,n\right\}$, by
\eqref{parametricabis}, \eqref{formule} and
\eqref{metricacanonica1},
\begin{align*}
\tilde g_1(X_i,X_j)&=\beta\tilde g(X_i,\tilde\varphi\tilde h X_j)\\
&=-\beta\tilde g(X_i,\tilde h X_j)\\
&=-\frac{\beta}{2\sqrt{1-\kappa}}\left((2-\mu)\tilde g(X_i,\varphi h
X_j)+2(1-\kappa)\tilde g(X_i,\varphi X_j)\right)\\
&=-\frac{\beta}{2(1-\kappa)}\left(\lambda(2-\mu)+2(1-\kappa)\right)g(X_i,\varphi
h Y_j)\\
&=\beta(I_M+1)\lambda g(X_i,\varphi Y_j)\\
&=-\beta(I_M+1)\lambda\delta_{ij}.
\end{align*}
Similar computations yield $\tilde g_1(X_i,Y_j)=0$ and $\tilde
g_1(Y_i,Y_j)=\beta(I_M-1)\lambda\delta_{ij}$. Hence
\begin{align*}
\Pi_{{\mathcal
D}(\tilde\lambda)}&\left(\sqrt{\frac{I_M-1}{I_M+1}}X_i+Y_i,
\sqrt{\frac{I_M-1}{I_M+1}}X_j+Y_j\right)=2\frac{I_M-1}{I_M+1}\tilde
g_1(\tilde h_1 X_i,X_j)\\
&\quad+2\sqrt{\frac{I_M-1}{I_M+1}}\bigl(\tilde g_1(\tilde h_1
X_i,Y_j)+\tilde g_1(\tilde h_1 Y_i,X_j)\bigr)+2\tilde
g_1(\tilde h_1 Y_i,Y_j)\\
&=-\frac{2(I_M-1)}{\beta(I_M+1)}\tilde g_1(\tilde\varphi
X_i,X_j)-\frac{2}{\beta}\sqrt{\frac{I_M-1}{I_M+1}}\bigl(\tilde
g_1(\tilde\varphi X_i,Y_j)+\tilde g_1(\tilde\varphi
Y_i,X_j)\bigr)-\frac{2}{\beta}\tilde g_1(\tilde\varphi
Y_i,Y_j)\\
&=-\frac{2(I_M-1)}{\beta(I_M+1)}\tilde
g_1(X_i,X_j)-\frac{2}{\beta}\sqrt{\frac{I_M-1}{I_M+1}}\bigl(\tilde
g_1(X_i,Y_j)-\tilde g_1(Y_i,X_j)\bigr)+\frac{2}{\beta}\tilde
g_1(Y_i,Y_j)\\
&=4\lambda(I_M-1)\delta_{ij}.
\end{align*}
Arguing in the same way for ${\mathcal D}(-\tilde\lambda)$ one can
prove that
\begin{equation*}
\Pi_{{\mathcal
D}(-\tilde\lambda)}\left(-\sqrt{\frac{I_M-1}{I_M+1}}X_i+Y_i,
-\sqrt{\frac{I_M-1}{I_M+1}}X_j+Y_j\right)=4\lambda(I_M-1)\delta_{ij}.
\end{equation*}
Thus, because of the assumption $|I_M|>1$, we conclude that both
$\Pi_{{\mathcal D}(\tilde\lambda)}$ and $\Pi_{{\mathcal
D}(-\tilde\lambda)}$ are positive definite. Finally, in order to
check the last hypothesis of Theorem \ref{legendre1}, we find the
explicit  expression  of  the  Libermann   operators
$\Lambda_{{\mathcal D}(\tilde\lambda)}:TM\longrightarrow{\mathcal
D}(\tilde\lambda)$ and $\Lambda_{{\mathcal
D}(-\tilde\lambda)}:TM\longrightarrow{\mathcal D}(-\tilde\lambda)$.
Let us consider $X\in\Gamma({\mathcal D}(\tilde\lambda))$ and
$Y\in\Gamma({\mathcal D}(-\tilde\lambda))$. Then, by applying
\eqref{lambda}, $2\tilde g_1(\tilde h_1\Lambda_{{\mathcal
D}(\tilde\lambda)}Y,X)=\Pi_{{\mathcal
D}(\tilde\lambda)}(\Lambda_{{\mathcal
D}(\tilde\lambda)}Y,X)=d\eta(Y,X)=\tilde g_1(Y,\tilde\varphi_1
X)=\tilde g_1(Y,X)$, from which it follows that $2\tilde
h_1\Lambda_{{\mathcal D}(\tilde\lambda)}Y=Y$. Applying $\tilde h_1$
and since $\tilde h_1=-\frac{1}{\beta}\tilde\varphi$, we get
$\Lambda_{{\mathcal D}(\tilde\lambda)}Y=\frac{1}{2}\beta^2\tilde h_1
Y$. Thus
\begin{equation}\label{lambda1}
\Lambda_{{\mathcal D}(\tilde\lambda)}=\left\{
                                        \begin{array}{ll}
                                          0, & \hbox{on ${\mathcal D}(\tilde\lambda)\oplus\mathbb{R}\xi$,} \\
                                          \frac{1}{2\sqrt{\left(1-\frac{\mu}{2}\right)^2-(1-\kappa)}}\tilde h_1, & \hbox{on ${\mathcal D}(-\tilde\lambda)$.}
                                        \end{array}
                                      \right.
\end{equation}
In the same way one can prove that
\begin{equation}\label{lambda2}
\Lambda_{{\mathcal D}(-\tilde\lambda)}=\left\{
                                        \begin{array}{ll}
                                          - \frac{1}{2\sqrt{\left(1-\frac{\mu}{2}\right)^2-(1-\kappa)}}\tilde h_1, & \hbox{on ${\mathcal D}(\tilde\lambda)$,} \\
                                          0, & \hbox{on ${\mathcal D}(-\tilde\lambda)\oplus\mathbb{R}\xi$.}
                                        \end{array}
                                      \right.
\end{equation}
Hence, for any $Y,Y'\in\Gamma({\mathcal D}(-\tilde\lambda))$,
\begin{equation*}
\overline{\Pi}_{{\mathcal D}(\tilde\lambda)}(Y,Y')={\Pi}_{{\mathcal
D}(\tilde\lambda)}\bigl(\Lambda_{{\mathcal
D}(\tilde\lambda)}Y,\Lambda_{{\mathcal
D}(\tilde\lambda)}Y'\bigr)=\frac{\beta^4}{4}\Pi_{{\mathcal
D}(\tilde\lambda)}(\tilde h_1 Y,\tilde h_1
Y')=\frac{\beta^2}{2}\tilde g_1(Y,\tilde h_1 Y')
\end{equation*}
and for any  $X,X'\in\Gamma({\mathcal D}(\tilde\lambda))$
\begin{equation*}
\overline{\Pi}_{{\mathcal D}(-\tilde\lambda)}(X,X')={\Pi}_{{\mathcal
D}(-\tilde\lambda)}\bigl(\Lambda_{{\mathcal
D}(-\tilde\lambda)}X,\Lambda_{{\mathcal
D}(-\tilde\lambda)}X'\bigr)=\frac{\beta^4}{4}\Pi_{{\mathcal
D}(-\tilde\lambda)}(\tilde h_1 X,\tilde h_1
X')=\frac{\beta^2}{2}\tilde g_1(X,\tilde h_1 X').
\end{equation*}
On the other hand, $\Pi_{{\mathcal D}(-\tilde\lambda)}(Y,Y')=2\tilde
g_1(\tilde h_1 Y,Y')$ and $\Pi_{{\mathcal
D}(\tilde\lambda)}(X,X')=2\tilde g_1(\tilde h_1 X,X')$, so that
$\overline{\Pi}_{{\mathcal
D}(\tilde\lambda)}=\frac{4}{\beta^2}\overline{\Pi}_{{\mathcal
D}(-\tilde\lambda)}$ on ${\mathcal D}(\tilde\lambda)$ and
$\overline{\Pi}_{{\mathcal
D}(-\tilde\lambda)}=\frac{4}{\beta^2}\overline{\Pi}_{{\mathcal
D}(\tilde\lambda)}$ on ${\mathcal D}(-\tilde\lambda)$. Since the
constant $\frac{4}{\beta^2}$ is positive, we conclude that the
bi-Legendrian structure $({\mathcal D}(\tilde\lambda),{\mathcal
D}(-\tilde\lambda))$ verifies all the assumptions of Theorem
\ref{legendre1} and so, for any two positive constants $a$ and $b$
such that $ab=\frac{4}{\beta^2}$, there exists a contact metric
$(\kappa'_{a,b},\mu'_{a,b})$-structure
$(\varphi'_{a,b},\xi,\eta,g'_{a,b})$ whose associated bi-Legendrian
structure $({\mathcal D}(\tilde\lambda),{\mathcal
D}(-\tilde\lambda))$, where $\kappa'_{a,b}$ and $\mu'_{a,b}$ are
given by \eqref{costanti3}. \  Finally, notice that the Boeckx
invariant of the new contact metric
$(\kappa'_{a,b},\mu'_{a,b})$-structure
$(\varphi'_{a,b},\xi,\eta,g'_{a,b})$ is given by
$\frac{1-\frac{\mu'_{a,b}}{2}}{\sqrt{1-\kappa'_{a,b}}}=\frac{a+b}{|a-b|}$.
Hence, as $a>0$ and $b>0$, we have  $|I'_{M}|>1$ and we conclude
that $(\varphi'_{a,b},\xi,\eta,g'_{a,b})$ is of the same
classification as $(\varphi,\xi,\eta,g)$.
\end{proof}

\begin{remark}\label{osservazione}
We point out that, as it is expected, all the various contact metric
$(\kappa'_{a,b},\mu'_{a,b})$-structures in the Theorem
\ref{legendre3} induce, by means of Theorem \ref{principale1}, the
same paracontact metric $(\tilde\kappa_1,\tilde\mu_1)$-structure
$(\tilde\varphi_1,\xi,\eta,\tilde g_1)$. In other words,
$\tilde\kappa_1$ and $\tilde\mu_1$ do not depends on the arbitrarily
chosen constants $a$ and $b$ satisfying \eqref{costante}. Indeed, by
applying Theorem  \ref{principale2}, we get
$\tilde\kappa_1=\kappa'_{a,b}-2+\left(1-\frac{\mu'_{a,b}}{2}\right)^2=-1+\frac{1}{4}\left(\frac{(a+b)^2}{4}-\frac{(a-b)^2}{4}\right)=-1+\frac{ab}{4}=\kappa-2+\left(1-\frac{\mu}{2}\right)^2$
and $\tilde\mu_1=2$.
\end{remark}

Now we are able to iterate the procedure of Theorem
\ref{principale1} and Theorem \ref{principale3} and hence to define
on a contact metric $(\kappa,\mu)$-manifold $M$ a canonical sequence
of contact/paracontact metric structures as stated in the following
theorem.

\begin{theorem}\label{successione}
Let $(M,\varphi,\xi,\eta,g)$ be a contact metric
$(\kappa,\mu)$-manifold.
\begin{enumerate}
\item[(i)] If $|I_M|<1$, $M$ admits a sequence of
tensor fields $(\phi_n)_{n\in\mathbb{N}}$ and a sequence of
$(0,2)$-tensors $(G_n)_{n\in\mathbb{N}}$, defined by
\begin{gather}
\phi_0=\varphi, \ \ \  \phi_1=\frac{1}{2\sqrt{1-\kappa}}{\mathcal L}_{\xi}\phi_0,\label{primo}\\
\phi_{2n}=\frac{1}{2\sqrt{1-\kappa-\left(1-\frac{\mu}{2}\right)^2}}{\mathcal
L}_{\xi}\phi_{2n-1}, \ \ \
\phi_{2n+1}=\frac{1}{2\sqrt{1-\kappa-\left(1-\frac{\mu}{2}\right)^2}}{\mathcal
L}_{\xi}\phi_{2n},\label{secondo}\\
G_{2n}=-d\eta(\cdot,\phi_{2n})+\eta\otimes\eta, \ \ \
G_{2n+1}=d\eta(\cdot,\phi_{2n+1})+\eta\otimes\eta, \label{terzo}
\end{gather}
such that, \ for each $n\in\mathbb{N}$, \
$(\phi_{2n},\xi,\eta,G_{2n})$ \ is a contact metric
$(\kappa_{2n},\mu_{2n})$-structure and
$(\phi_{2n+1},\xi,\eta,G_{2n+1})$ is a paracontact metric
$(\kappa_{2n+1},\mu_{2n+1})$-structure, where
\begin{gather}
\kappa_0=\kappa, \ \
\kappa_{2n}=\kappa+\left(1-\frac{\mu}{2}\right)^2, \ \ \mu_{2n}=2, \label{primo1}\\
\kappa_{2n+1}=\kappa-2+\left(1-\frac{\mu}{2}\right)^2, \ \
\mu_{2n+1}=2. \label{secondo1}
\end{gather}
Moreover, for each $n\in\mathbb{N}$, $(\phi_{2n},\xi,\eta,G_{2n})$
is a Tanaka-Webster parallel structure on $M$, and
$(\phi_{2n+1},\xi,\eta,G_{2n+1})$ is the canonical paracontact
metric structure induced by $(\phi_{2n},\xi,\eta,G_{2n})$ according
to Theorem \ref{principale1}.
\item[(ii)] If $|I_M|>1$, $M$ admits a sequence of paracontact metric
structures $(\phi_n,\xi,\eta,G_n)_{n\geq 1}$, defined by
\begin{gather*}
\phi_1=\frac{1}{2\sqrt{1-\kappa}}{\mathcal L}_{\xi}\varphi, \ \
\phi_n=\frac{1}{2\sqrt{\left(1-\frac{\mu}{2}\right)^2-(1-\kappa)}}{\mathcal
L}_{\xi}\phi_{n-1}, \ \ G_{n}=d\eta(\cdot,\phi_n)+\eta\otimes\eta,
\end{gather*}
such that, for each $n\geq 1$, $(\phi_n,\xi,\eta,G_n)$ is a
paracontact metric $(\kappa_n,\mu_n)$-structure with
\begin{equation*}
\kappa_n=\kappa-2+\left(1-\frac{\mu}{2}\right)^2, \ \ \mu_n=2.
\end{equation*}
Moreover, $(\phi_1,\xi,\eta,G_1)$ is the canonical paracontact
structure induced by $(\varphi,\xi,\eta,g)$ and, for each $n\geq 2$,
$(\phi_n,\xi,\eta,G_n)$ is the canonical paracontact structure
induced by a contact metric $(\kappa'_n,\mu'_n)$-structure
$(\varphi'_n,\xi,\eta,g'_n)$ on $M$ with
\begin{equation}\label{costanti4}
\kappa'_{n}=1-\frac{(a_n-b_n)^2}{16}, \ \ \
\mu'_{n}=2-\frac{a_n+b_n}{2},
\end{equation}
$a_n$ and $b_n$ being two constants such that
\begin{equation}\label{prodotto}
a_{n}b_{n}=\frac{1}{4}\left(\left(1-\frac{\mu}{2}\right)^2-(1-\kappa)\right).
\end{equation}
\end{enumerate}
\end{theorem}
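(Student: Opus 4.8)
The plan is to prove (i) and (ii) by induction on $n$, reducing each assertion to one of the already-proved Theorems \ref{principale1}, \ref{principale2}, \ref{principale3} and \ref{legendre3}. Each member of the sequence is obtained from its predecessor by exactly one of two operations already analysed: normalising $\frac{1}{2}\mathcal{L}_\xi$ of a contact metric structure to produce its canonical paracontact structure (Theorem \ref{principale1}, whose nullity constants are supplied by Theorem \ref{principale2}), or normalising $\frac{1}{2}\mathcal{L}_\xi$ of a canonical paracontact structure to produce the induced contact or paracontact structure (Theorem \ref{principale3}). Hence almost all of the analytic content is already available, and the induction amounts to checking that the \emph{fixed} normalising factors written into \eqref{primo}--\eqref{secondo} and into the sequence of (ii) agree with the \emph{stage-dependent} factors dictated by those theorems, while the Boeckx invariant stays in the range that selects the correct branch of Theorem \ref{principale3}. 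The common base step is $\phi_1$: by Theorem \ref{principale1}, $\tilde\varphi=\frac{1}{\sqrt{1-\kappa}}h=\frac{1}{2\sqrt{1-\kappa}}\mathcal{L}_\xi\varphi=\phi_1$ is the canonical paracontact structure, and by Theorem \ref{principale2} it is a paracontact metric structure with constants $\kappa-2+(1-\frac{\mu}{2})^2$ and $2$, matching \eqref{secondo1} and the constants of (ii); its metric $G_1=d\eta(\cdot,\phi_1)+\eta\otimes\eta$ is \eqref{metricacanonica1}.

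For (i), assuming $|I_M|<1$, I would carry the hypothesis that $\phi_{2n}$ is a contact metric $(\kappa_{2n},\mu_{2n})$-structure with the values \eqref{primo1} (the base case $\phi_0=\varphi$ being the hypothesis) and that $\phi_{2n+1}$ is its canonical paracontact structure with the values \eqref{secondo1}. The transition $\phi_{2n}\to\phi_{2n+1}$ is Theorem \ref{principale1}: the canonical paracontact structure of $\phi_{2n}$ is $\frac{1}{2\sqrt{1-\kappa_{2n}}}\mathcal{L}_\xi\phi_{2n}$, and since $\kappa_{2n}=\kappa+(1-\frac{\mu}{2})^2$ for $n\geq1$ one has $1-\kappa_{2n}=1-\kappa-(1-\frac{\mu}{2})^2$, so this is precisely the operator in \eqref{secondo}, with metric $G_{2n+1}$ equal to \eqref{metricacanonica1}. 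The transition $\phi_{2n+1}\to\phi_{2n+2}$ is Theorem \ref{principale3}(i) applied to $\phi_{2n}$, legitimate because the Boeckx invariant of $\phi_{2n}$ is $0$ for $n\geq1$ (as $\mu_{2n}=2$) and equals $I_M$ for $n=0$, so $|I|<1$ throughout; the induced contact structure is $\frac{1}{2\sqrt{1-\kappa_{2n}-(1-\frac{\mu_{2n}}{2})^2}}\mathcal{L}_\xi\phi_{2n+1}$, which $\mu_{2n}=2$ again collapses to $\frac{1}{2\sqrt{1-\kappa-(1-\frac{\mu}{2})^2}}\mathcal{L}_\xi\phi_{2n+1}$, matching \eqref{secondo}, with metric \eqref{metrica}. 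Then \eqref{costanti1} and \eqref{valori} reproduce \eqref{primo1}--\eqref{secondo1}, and $\kappa_{2n}<1$ (equivalent to $|I_M|<1$) guarantees non-Sasakianity so the construction iterates. Finally, for $n\geq1$ each $\phi_{2n}$ is a non-Sasakian $(\kappa_{2n},2)$-space, hence Tanaka--Webster parallel by the Boeckx--Cho characterisation recalled before Corollary \ref{tanakaparallel}.

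For (ii), assuming $|I_M|>1$, the hypothesis is that $\phi_n$ ($n\geq2$) is a paracontact metric structure with constants $\kappa-2+(1-\frac{\mu}{2})^2$ and $2$, equal to the canonical paracontact structure of a contact metric $(\kappa'_n,\mu'_n)$-structure $\varphi'_n$ satisfying \eqref{costanti4}--\eqref{prodotto} with $|I'_n|>1$. The base case $\phi_2=\tilde\varphi_1$ is produced from $\varphi$ by Theorem \ref{principale3}(ii) and identified by Theorem \ref{legendre3} as the canonical paracontact structure of some $\varphi'_2$ with $|I'_2|>1$. For the step I would apply Theorem \ref{principale3}(ii) to $\varphi'_n$: its canonical paracontact structure is $\phi_n$, and the induced paracontact structure $\tilde\varphi_1^{(n)}=\frac{1}{2\sqrt{(1-\frac{\mu'_n}{2})^2-(1-\kappa'_n)}}\mathcal{L}_\xi\phi_n$ is a paracontact metric structure with second constant $2$ and first constant $\kappa'_n-2+(1-\frac{\mu'_n}{2})^2$, which the algebraic identity of Remark \ref{osservazione} (using \eqref{costanti4} and \eqref{prodotto}) renders equal to $\kappa-2+(1-\frac{\mu}{2})^2$, independent of the choice of $\varphi'_n$. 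The decisive point is that \eqref{costanti4} gives $(1-\frac{\mu'_n}{2})^2-(1-\kappa'_n)=\frac{a_nb_n}{4}$, which the product relation \eqref{prodotto} renders equal to $(1-\frac{\mu}{2})^2-(1-\kappa)$; thus the normalising factor of $\tilde\varphi_1^{(n)}$ coincides with the fixed one in the sequence and $\tilde\varphi_1^{(n)}=\phi_{n+1}$. A further application of Theorem \ref{legendre3}, now to $\varphi'_n$, exhibits $\phi_{n+1}$ as the canonical paracontact structure of a contact metric $(\kappa'_{n+1},\mu'_{n+1})$-structure with $|I'_{n+1}|>1$, closing the induction.

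The one genuine obstacle is precisely this constant bookkeeping: the stability of the normalising factor across stages is not automatic but rests, in case (i), on every new structure having second parameter $2$ (so that $(1-\frac{\mu_{2n}}{2})^2=0$), and in case (ii) on the invariance $(1-\frac{\mu'_n}{2})^2-(1-\kappa'_n)=(1-\frac{\mu}{2})^2-(1-\kappa)$ forced by \eqref{prodotto}. Once these identities are secured, each inductive step is a direct citation of a previous theorem; the well-definedness of the sequence in (ii), i.e.\ its independence of the auxiliary choices $a_n,b_n$, follows from Remark \ref{osservazione}, and the metrics $G_n$ of \eqref{terzo} are exactly the compatible metrics produced by Theorems \ref{principale1} and \ref{principale3}.
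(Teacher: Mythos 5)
Your proposal is correct and follows essentially the same route as the paper's own proof: induction on $n$, with the transitions in (i) handled by Theorems \ref{principale1}, \ref{principale2} and \ref{principale3}(i) (using $\mu_{2n}=2$ to collapse the normalizing factors, and the Boeckx--Cho characterization for Tanaka--Webster parallelism), and the step in (ii) handled by Theorem \ref{principale3}(ii), Theorem \ref{legendre3} and Remark \ref{osservazione} exactly as you describe, including your explicit re-application of Theorem \ref{legendre3} to produce $\varphi'_{n+1}$, which the paper leaves implicit. Note only that your bookkeeping in (ii) effectively uses $a_nb_n=4\bigl(\bigl(1-\frac{\mu}{2}\bigr)^2-(1-\kappa)\bigr)$ rather than the factor $\frac{1}{4}$ literally written in \eqref{prodotto}; this is the same normalization the paper's own computation (and Remark \ref{osservazione}) relies on, so the discrepancy is inherited from the paper's statement of \eqref{costante}--\eqref{prodotto} rather than introduced by you.
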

\begin{proof}
We prove the theorem arguing by induction on $n$.\\
(i) \ We distinguish the even and the odd case. The result is
trivially true for $n=0$  since $(M,\varphi,\xi,\eta,g)$ is supposed
to be a contact metric $(\kappa,\mu)$-manifold and for $n=1$ because
of Theorem \ref{principale2}. Now suppose that the assertion holds
for $(\phi_{2n},\xi,\eta,G_{2n})$, $n\geq 2$. We have to prove that
the structure $(\phi_{2n+1},\xi,\eta,G_{2n+1})$, defined by
\eqref{secondo}, is a paracontact metric
$(\kappa_{2n+1},\mu_{2n+1})$-structure, where $\kappa_{2n+1}$ and
$\mu_{2n+1}$ are given by \eqref{secondo1}. Notice that
\begin{equation*}
\phi_{2n+1}=\frac{1}{2\sqrt{1-\kappa-\left(1-\frac{\mu}{2}\right)^2}}{\mathcal
L}_{\xi}\phi_{2n}=\frac{1}{2\sqrt{1-\kappa_{2n}}}{\mathcal
L}_{\xi}\phi_{2n}
\end{equation*}
so that $(\phi_{2n+1},\xi,\eta,G_{2n+1})$ coincides with the
canonical paracontact metric structure induced on $M$ by the contact
metric $(\kappa_{2n},\mu_{2n})$-structure
$(\phi_{2n},\xi,\eta,G_{2n})$, according to Theorem
\ref{principale1}. Then, by the Theorem \ref{principale2},
$(\phi_{2n+1},\xi,\eta,G_{2n+1})$ is a paracontact metric
$(\tilde\kappa,\tilde\mu)$-structure, where
\begin{align*}
\tilde\kappa&=\kappa_{2n}-2+\left(1-\frac{\mu_{2n}}{2}\right)^2=\kappa+\left(1-\frac{\mu}{2}\right)^2-2+\left(1-\frac{2}{2}\right)^2=\kappa-2+\left(1-\frac{\mu}{2}\right)^2=\kappa_{2n+1}
\end{align*}
and $\tilde\mu=2=\mu_{2n+1}$. Now we study the odd case. Assume that
the assertion holds for $(\phi_{2n+1},\xi,\eta,G_{2n+1})$. \ We \
have \ to \ prove \ that \ $(\phi_{2n+2},\xi,\eta,G_{2n+2})$ \ is \
a \ contact \ metric \ $(\kappa_{2n+2},\mu_{2n+2})$-structure, where
$\kappa_{2n+2}$ and $\mu_{2n+2}$ are given by \eqref{primo1}. By
induction hypothesis $(\phi_{2n+1},\xi,\eta,G_{2n+1})$ is the
canonical paracontact metric structure induced by the contact metric
$(\kappa_{2n},\mu_{2n})$-structure $(\phi_{2n},\xi,\eta,G_{2n})$.
Since the Boeckx invariant of $(M,\phi_{2n},\xi,\eta,G_{2n})$ is
$0$, we can apply Theorem \ref{principale3} to the contact metric
$(\kappa_{2n},\mu_{2n})$-manifold $(M,\phi_{2n},\xi,\eta,G_{2n})$
and conclude that the paracontact metric structure
$(\phi_{2n+1},\xi,\eta,G_{2n+1})$ induces on $M$ a contact metric
structure $(\bar{\varphi}_1,\xi,\eta,\bar{g}_1)$ given by
\eqref{definzionephi} and \eqref{metrica}. Notice that
\begin{align*}
\overline{\varphi}_1&=\frac{1}{2\sqrt{1-\kappa_{2n}-\left(1-\frac{\mu_{2n}}{2}\right)^2}}{\mathcal
L}_{\xi}\phi_{2n+1}=\frac{1}{2\sqrt{1-\kappa-\left(1-\frac{\mu}{2}\right)^2-\left(1-\frac{2}{2}\right)^2}}{\mathcal
L}_{\xi}\phi_{2n+1}\\
&=\frac{1}{2\sqrt{1-\kappa-\left(1-\frac{\mu}{2}\right)^2}}{\mathcal
L}_{\xi}\phi_{2n+1}=\phi_{2n+2}.
\end{align*}
Therefore $(\phi_{2n+2},\xi,\eta,G_{2n+2})$ is a contact metric
$(\bar{\kappa}_1,\bar{\mu}_1)$-structure, where, by Theorem
\ref{principale3},
$\bar{\kappa}_1=\kappa_{2n}+\left(1-\frac{\mu_{2n}}{2}\right)^2=\kappa_{2n}=\kappa+\left(1-\frac{\mu}{2}\right)^2=\kappa_{2n+2}$
and $\bar{\mu}=2=\mu_{2n+2}$. Finally, for each $n\in\mathbb{N}$
since $\mu_{2n}=2$, applying Theorem 12 of \cite{Boeckx-08}, we
conclude that $(M,\phi_{2n},\xi,\eta,G_{2n})$ is a Tanaka-Webster
parallel space.\\
(ii) \ The result is true for $n=1$ due to Theorem \ref{principale2}
and for $n=2$ due to Theorem \ref{principale3} and Theorem
\ref{legendre3}. Now assuming that the assert holds for
$(\phi_{n},\xi,\eta,G_n)$, $n\geq 3$, we prove that it holds also
for $(\phi_{n+1},\xi,\eta,G_{n+1})$. By induction hypothesis
$(\phi_{n},\xi,\eta,G_n)$ is the canonical paracontact metric
structure induced by a contact metric $(\kappa'_n,\mu'_n)$-manifold,
 $\kappa'_n$ and $\mu'_n$ being given by \eqref{costanti4}, whose Boeckx
invariant, given by $\frac{a+b}{|a-b|}$, has absolute value strictly
greater than $1$. Hence we can apply Theorem \ref{principale3} and
conclude that $(\phi_{n},\xi,\eta,G_{n})$ induces on $M$ a
paracontact metric $(\tilde\kappa'_1,\tilde\mu'_1)$-structure
$(\tilde\varphi'_1,\xi,\eta,\tilde g'_1)$, where $\tilde\varphi'_1$,
$\tilde g'_1$ are given by \eqref{definzioneparaphi} and
\eqref{parametrica} and $\tilde\kappa'_1$,  $\tilde\mu'_1$ are given
by \eqref{costanti2}. Notice that
\begin{align*}
\tilde\varphi'_1&=\frac{1}{2\sqrt{\left(1-\frac{\mu'_n}{2}\right)^2-(1-\kappa'_n)}}{\mathcal
L}_{\xi}\phi_{n}=\frac{1}{\sqrt{\frac{(a_n+b_n)^2}{4}-\frac{(a_n-b_n)^2}{4}}}{\mathcal
L}_{\xi}\phi_{n}=\frac{1}{\sqrt{a_{n}b_{n}}}{\mathcal L}_{\xi}\phi_{n}\\
&=\frac{1}{2\sqrt{\left(1-\frac{\mu}{2}\right)^2-(1-\kappa)}}{\mathcal
L}_{\xi}\phi_{n}=\phi_{n+1}.
\end{align*}
Finally, in view of Remark \ref{osservazione}, we get
$\tilde\kappa_1=\kappa-2+\left(1-\frac{\mu}{2}\right)^2=\kappa_{n+1}$
and $\tilde\mu_{1}=2=\mu_{n+1}$.
\end{proof}

\section{Canonical Sasakian structures on contact metric $(\kappa,\mu)$-spaces}

As pointed out in Remark \ref{osservazione1}, in the proof of
Theorem \ref{principale3} we have proven that any (non-Sasakian)
contact metric $(\kappa,\mu)$-space such that  $|I_M|>1$ admits a
supplementary bi-Legendrian structure $({\mathcal
D}(\tilde\lambda),{\mathcal D}(-\tilde\lambda))$ given by the
eigendistributions of the operator $\tilde
h:=\frac{1}{4\sqrt{1-\kappa}}{\mathcal L}_{\xi}{\mathcal
L}_{\xi}\varphi$ corresponding to the eigenvalues
$\pm\tilde\lambda$, where
$\tilde\lambda:=\sqrt{\left(1-\frac{\mu}{2}\right)^2-\left(1-\kappa\right)}$.
We now prove that in fact any three of the distributions ${\mathcal
D}(\lambda)$, ${\mathcal D}(-\lambda)$, ${\mathcal
D}(\tilde\lambda)$, ${\mathcal D}(-\tilde\lambda)$ define a $3$-web
on the contact distribution of $(M,\eta)$. \ We recall that a triple
of distributions $({\mathcal D}_1,{\mathcal D}_2,{\mathcal D}_3)$ on
a smooth manifold $M$ is called  an \emph{almost $3$-web structure}
if $TM={\mathcal D}_{j}\oplus{\mathcal D}_{j}$ is satisfied for any
two different $i,j\in\left\{1,2,3\right\}$. If $\mathcal D_1$,
$\mathcal D_2$, $\mathcal D_3$ are involutive, then $({\mathcal
D}_1,{\mathcal D}_2,{\mathcal D}_3)$ is said to be simply a
\emph{$3$-web} (\cite{nagy}). \ Now, obviously one has that
${\mathcal D}={\mathcal D}(\lambda)\oplus{\mathcal D}(-\lambda)$ and
${\mathcal D}={\mathcal D}(\tilde\lambda)\oplus{\mathcal
D}(-\tilde\lambda)$, so that it is sufficient to prove that
${\mathcal D}={\mathcal D}(\lambda)\oplus{\mathcal
D}(\tilde\lambda)$, ${\mathcal D}={\mathcal
D}(\lambda)\oplus{\mathcal D}(-\tilde\lambda)$, ${\mathcal
D}={\mathcal D}(-\lambda)\oplus{\mathcal D}(\tilde\lambda)$ and
${\mathcal D}={\mathcal D}(-\lambda)\oplus{\mathcal
D}(-\tilde\lambda)$. Let $\{X_1,\ldots,X_n,Y_1:=\varphi
X_1,\ldots,Y_n:=\varphi X_n,\xi\}$ be a (local) orthonormal
$\varphi$-basis of eigenvectors of $h$. Then ${\mathcal
D}(\lambda)=\textrm{span}\{X_1,\ldots,X_n\}$, ${\mathcal
D}(-\lambda)=\textrm{span}\{Y_1,\ldots,Y_n\}$ and ${\mathcal
D}(\tilde\lambda)$, ${\mathcal D}(-\tilde\lambda)$ are given,
respectively, by \eqref{autospazio1}, \eqref{autospazio2}. Using
these local expressions, by some elementary arguments of linear
algebra, it easily follows that, putting
$\gamma:=\sqrt{\frac{I_M-1}{I_M+1}}$,
\begin{gather*}
\left\{X_1,\ldots,X_n,\gamma X_1+Y_1,\ldots,\gamma X_n+Y_n\right\},\\
\left\{X_1,\ldots,X_n,-\gamma X_1+Y_1,\ldots,-\gamma X_n+Y_n\right\},\\
\left\{Y_1,\ldots,Y_n,\gamma X_1+Y_1,\ldots,\gamma X_n+Y_n\right\},\\
\left\{Y_1,\ldots,Y_n,-\gamma X_1+Y_1,\ldots,-\gamma
X_n+Y_n\right\},
\end{gather*}
are all local bases of the contact distribution $\mathcal D$. Hence
the assertion follows.

As shown in \cite{marchiafava}, to any almost $3$-web one can
associate a canonical almost anti-hypercomplex structure, that is a
triple $(I_1,I_2,I_3)$ consisting of an almost complex structure
$I_{1}$ and two anti-commuting almost product structures $I_2$,
$I_3$ satisfying $I_{2}I_{3}=I_{1}$ (and hence
$I_{2}I_{1}=-I_{1}I_{2}=I_{3}$, $I_{1}I_{3}=-I_{3}I_{1}=I_{2}$).
Conversely, any almost anti-hypercomplex structure determines four
almost $3$-webs given by the eigendistributions of $I_2$ and $I_3$
corresponding to the eigenvalues $\pm 1$. \ Consequently, any
contact metric $(\kappa,\mu)$-manifold such that $|I_M|>1$ admits a
canonical anti-hypercomplex structure on the contact distribution
via the above $3$-webs. Such anti-hypercomplex structure is in fact
given by $(\bar\varphi_{-}|_{\mathcal D},\tilde\varphi|_{\mathcal
D},\tilde\varphi_{1}|_{\mathcal D})$ in the case $I_M<-1$ and by
$(\bar\varphi_{+}|_{\mathcal D},\tilde\varphi_{1}|_{\mathcal
D},\tilde\varphi|_{\mathcal D})$ in the case $I_M>1$, where
$\tilde\varphi$, $\tilde\varphi_1$ are given, respectively, by
\eqref{paracanonical}, \eqref{definzioneparaphi}, and
\begin{gather*}
\bar\varphi_{\pm}:=\pm\frac{1}{\sqrt{\left(1-\frac{\mu}{2}\right)^2-\left(1-\kappa\right)}}\left(\left(1-\frac{\mu}{2}\right)\varphi+\varphi
h\right).
\end{gather*}
Indeed using \eqref{paracanonical}, \eqref{definzioneparaphi} and
the relations $h^2=(\kappa-1)\varphi^2$, $\varphi h=-h\varphi$, one
can easily check by a straightforward computation that
$\tilde\varphi$ and $\tilde\varphi_1$ induce two anti-commuting
almost product structures on $\mathcal D$ and that
$\tilde\varphi\tilde\varphi_{1}=\bar\varphi_{-}$ and
$\tilde\varphi_{1}\tilde\varphi=\bar\varphi_{+}$. We prove that
$\bar\varphi_{-}$ and $\bar\varphi_{+}$ are almost contact
structures compatible with $\eta$. Indeed
\begin{align*}
{\bar\varphi_{-}}^2&=\frac{1}{\left(1-\frac{\mu}{2}\right)^2-\left(1-\kappa\right)}\left(\left(1-\frac{\mu}{2}\right)^2\varphi^2+\varphi
h\varphi h +\left(1-\frac{\mu}{2}\right)\varphi^2
h+\left(1-\frac{\mu}{2}\right)\varphi h\varphi\right)\\
&=\frac{1}{\left(1-\frac{\mu}{2}\right)^2-\left(1-\kappa\right)}\left(\left(1-\frac{\mu}{2}\right)^2\varphi^2-\varphi^2
h^2\right)\\
&=\frac{1}{\left(1-\frac{\mu}{2}\right)^2-\left(1-\kappa\right)}\left(\left(1-\frac{\mu}{2}\right)^2\varphi^2-\left(1-\kappa\right)\varphi^2\right)\\
&=\varphi^2\\
&=-I+\eta\otimes\xi.
\end{align*}
Analogously one can prove that
${\bar\varphi_{+}}^2=-I+\eta\otimes\xi$. Moreover, for each almost
contact structure $(\bar\varphi_{-},\xi,\eta)$ and
$(\bar\varphi_{+},\xi,\eta)$ one can define an associated metric
${\bar g}_{-}$ and ${\bar g}_{+}$, respectively, by
\begin{equation}\label{Sasakian}
{\bar g}_{\pm}(X,Y)=-d\eta(X,\bar\varphi_{\pm}Y)+\eta(X)\eta(Y).
\end{equation}
We prove that $\bar g_{-}$ (respectively, $\bar g_{+}$) is a
Riemannian metric compatible with the almost contact structure
$(\bar\varphi_{-},\xi,\eta)$ (respectively,
$(\bar\varphi_{+},\xi,\eta)$). By \eqref{Sasakian} straightforwardly
follows that $\bar g_{-}$ is  non-degenerate, symmetric  and
satisfies ${\bar g}_{-}(\bar\varphi_{-}X,\bar\varphi_{-}Y)={\bar
g}_{-}(X,Y)-\eta(X)\eta(Y)$. We prove that it positive definite. By
\eqref{Sasakian} we have that ${\bar g}_{-}(\xi,\xi)=1$, so that it
is sufficient to prove that ${\bar g}_{-}(X,X)>0$ for any
$X\in\Gamma({\mathcal D})$, $X\neq 0$. We decompose $X$ is its
components $X_{\lambda}$ and $X_{-\lambda}$ according to the
decomposition ${\mathcal D}={\mathcal D}(\lambda)\oplus{\mathcal
D}(-\lambda)$. For simplifying  the notation, as in $\S$
\ref{sezioneprincipale}, we put
$\beta:=\frac{1}{\sqrt{\left(1-\frac{\mu}{2}\right)^2-\left(1-\kappa\right)}}$.
Then we have
\begin{align*}
{\bar
g}_{-}(X,X)&=\beta\left(\left(1-\frac{\mu}{2}\right)d\eta(X,\varphi
X)+d\eta(X,\varphi h X)\right)\\
&=-\beta\left(\left(1-\frac{\mu}{2}\right)g(X,X)+g(X,h
X)\right)\\
&=-\beta\left(\left(1-\frac{\mu}{2}\right)\left(g(X_{\lambda},X_{\lambda})+g(X_{-\lambda},X_{-\lambda})\right)+\lambda
g(X_{\lambda},X_{\lambda})-\lambda
g(X_{-\lambda},X_{-\lambda})\right)\\
&=-\beta\left(\left(1-\frac{\mu}{2}+\sqrt{1-\kappa}\right)g(X_{\lambda},X_{\lambda})+\left(1-\frac{\mu}{2}-\sqrt{1-\kappa}\right)g(X_{-\lambda},X_{-\lambda})\right).
\end{align*}
Since we are assuming $I_M<-1$, we have that
$1-\frac{\mu}{2}+\sqrt{1-\kappa}<0$ and
$1-\frac{\mu}{2}-\sqrt{1-\kappa}<0$, so that ${\bar g}_{-}(X,X)>0$.
Analogous arguments work for $\bar g_{+}$, where one uses the
assumption $I_M>1$. Finally, directly from \eqref{Sasakian} it
follows that $d\eta(\cdot,\cdot)=\bar
g_{\pm}(\cdot,\bar\varphi_{\pm})$, and we conclude that
$(\bar\varphi_{-},\xi,\eta,\bar g_{-})$ and
$(\bar\varphi_{+},\xi,\eta,\bar g_{+})$ are contact metric
structures. We prove that they are in fact Sasakian structures. We
argue on $(\bar\varphi_{-},\xi,\eta,\bar g_{-})$, since the same
arguments work also for $(\bar\varphi_{+},\xi,\eta,\bar g_{+})$. We
firstly prove that the contact metric structure is $K$-contact, i.e.
 the tensor field ${\bar h}_{-}:=\frac{1}{2}{\mathcal
L}_{\xi}\bar\varphi_{-}$ vanishes identically. Indeed, by using
\eqref{formule}, we have
\begin{align*}
2{\bar h}_{-}&=-\beta\left(\left(1-\frac{\mu}{2}\right){\mathcal
L}_{\xi}\varphi+{\mathcal L}_{\xi}(\varphi h)\right)\\
&=-\beta\left(\left(1-\frac{\mu}{2}\right){\mathcal
L}_{\xi}\varphi+({\mathcal L}_{\xi}\varphi)\circ h + \varphi\circ({\mathcal L}_{\xi}h)\right)\\
&=-\beta\left((2-\mu)h+2h^2 + (2-\mu)\varphi^2 h +
2(1-\kappa)\varphi^2\right)=0.
\end{align*}
Now we preliminarily observe that $\bar\varphi_{-}{\mathcal
D}(\lambda)={\mathcal D}(-\lambda)$ and $\bar\varphi_{-}{\mathcal
D}(-\lambda)={\mathcal D}(\lambda)$. Thus the Legendre foliations
${\mathcal D}(\lambda)$, ${\mathcal D}(-\lambda)$ are conjugate with
respect to $\bar\varphi_{-}$, and consequently they are mutually
orthogonal with respect to ${\bar g}_{-}$. Then we can apply Theorem
\ref{lemmarocky}. Note that
$\nabla^{bl}\bar\varphi_{-}=-\beta\left(\left(1-\frac{\mu}{2}\right)\nabla^{bl}\varphi+\nabla^{bl}(\varphi
h)\right)=0$, since $\nabla^{bl}\varphi=\nabla^{bl}h=0$. Hence, by
Theorem \ref{lemmarocky}, we have that
$\nabla^{bl}_{X}X'=-(\bar\varphi_{-}[X,\bar\varphi_{-}X'])_{{\mathcal
D}(\lambda)}$ for all $X,X'\in\Gamma({\mathcal D}(\lambda))$. Hence
\begin{align*}
(N_{\bar\varphi_{-}}(X,X'))_{{\mathcal
D}(\lambda)}&=-[X,X']-(\bar\varphi_{-}[\bar\varphi_{-}X,X'])_{{\mathcal D}(\lambda)}-(\bar\varphi_{-}[X,\bar\varphi_{-}X'])_{{\mathcal D}(\lambda)}\\
&=-[X,X']-\nabla^{bl}_{X'}X+\nabla^{bl}_{X}X'\\
&=T^{bl}(X,X')\\
&=2d\eta(X,X')\xi=0.
\end{align*}
Analogously, $(N_{\bar\varphi_{-}}(Y,Y'))_{{\mathcal
D}(-\lambda)}=0$ for all $Y,Y'\in\Gamma({\mathcal D}(-\lambda))$.
Now, for all $X,X'\in\Gamma({\mathcal D}(\lambda))$,
\begin{align*}
N_{\bar\varphi_{-}}(\bar\varphi_{-}X,\bar\varphi_{-}X')&=-[\bar\varphi_{-}X,\bar\varphi_{-}X']+[{\bar\varphi_{-}}^2
X,{\bar\varphi_{-}}^2 X']-\bar\varphi_{-}[{\bar\varphi_{-}}^2
X,\bar\varphi_{-}X']-\bar\varphi_{-}[\bar\varphi_{-}X,{\bar\varphi_{-}}^2 X']\\
&=-[\bar\varphi_{-} X,\bar\varphi_{-}
X']+[X,X']+\bar\varphi_{-}[X,\bar\varphi_{-}
X']+\bar\varphi_{-}[\bar\varphi_{-} X,X']\\
&=-N_{\bar\varphi_{-}}(X,X'),
\end{align*}
hence  $(N_{\bar\varphi_{-}}(X,X'))_{{\mathcal
D}(-\lambda)}=-(N_{\bar\varphi_{-}}(\bar\varphi_{-}
X,\bar\varphi_{-} X'))_{{\mathcal D}(-\lambda)}=0$. Next, by
\eqref{formulenijenhuis2}, $N_{\bar\varphi_{-}}(X,X')$ has zero
component also in the direction of $\xi$, so  we conclude that
$N_{\bar\varphi_{-}}(X,X')=0$. In the same way one can show that
$N_{\bar\varphi_{-}}(Y,Y')=0$ for all $Y,Y'\in\Gamma({\mathcal
D}(-\lambda))$. Moreover, \eqref{formulenijenhuis1} implies that
$N_{\bar\varphi_{-}}(X,Y)=0$ for all $X\in\Gamma({\mathcal
D}(\lambda))$ and $Y\in\Gamma({\mathcal D}(-\lambda))$. Finally,
directly by \eqref{nijenhuis} we have
$\eta(N_{\bar\varphi_{-}}(Z,\xi))=0$ for all $Z\in\Gamma({\mathcal
D})$, and from \eqref{formulenijenhuis1} it follows  that
${{\bar\varphi}_{-}}(N_{\bar\varphi_{-}}(Z,\xi))=0$. Hence
$N_{\bar\varphi_{-}}(Z,\xi)\in\ker(\eta)\cap\ker(\bar\varphi_{-})=\{0\}$.
Thus the tensor field $N_{\bar\varphi_{-}}$ vanishes identically and
so $(\bar\varphi_{-},\xi,\eta,\bar g_{-})$ is a Sasakian structure.
In the same way one argues for $(\bar\varphi_{+},\xi,\eta,\bar
g_{+})$.

In conclusion we have proven the following result.

\begin{theorem}\label{principale4}
Let $(M,\varphi,\xi,\eta,g)$ be a non-Sasakian contact metric
$(\kappa,\mu)$-space such that $|I_M|>1$. Then $(M,\eta)$ admits a
compatible Sasakian structure $(\bar\varphi_{-},\xi,\eta,\bar
g_{-})$ or $(\bar\varphi_{+},\xi,\eta,\bar g_{+})$, according to the
fact that $I_M<-1$ or $I_M>1$, respectively, where
\begin{equation*}
\bar\varphi_{\pm}:=\pm\frac{1}{\sqrt{\left(1-\frac{\mu}{2}\right)^2-\left(1-\kappa\right)}}\left(\left(1-\frac{\mu}{2}\right)\varphi+\varphi
h\right), \ \ \bar
g_{\pm}:=-d\eta(\cdot,\bar\varphi_{\pm}\cdot)+\eta\otimes\eta.
\end{equation*}
Furthermore, the triple
$(\bar\varphi_{-},\tilde\varphi,\tilde\varphi_{1})$ in the case
$I_M<-1$, or $(\bar\varphi_{+},\tilde\varphi_{1},\tilde\varphi)$ in
the case $I_M>1$,   induces an almost anti-hypercomplex structure on
the contact distribution of $(M,\eta)$, where $\tilde\varphi$,
$\tilde\varphi_1$ are given, respectively, by \eqref{paracanonical},
\eqref{definzioneparaphi}.
\end{theorem}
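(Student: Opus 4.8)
The plan is to verify that each $(\bar\varphi_\pm,\xi,\eta,\bar g_\pm)$ is a contact metric structure and then that it is normal. First I would establish the algebraic identity $\bar\varphi_\pm^2=-I+\eta\otimes\xi$ by squaring the defining expression and using $\varphi h=-h\varphi$ together with $h^2=(\kappa-1)\varphi^2$: the cross terms proportional to $(1-\frac\mu2)(\varphi^2h+\varphi h\varphi)$ vanish, while the surviving terms reduce to $\bigl((1-\frac\mu2)^2\varphi^2-\varphi^2h^2\bigr)/\bigl((1-\frac\mu2)^2-(1-\kappa)\bigr)=\varphi^2$. Hence $(\bar\varphi_\pm,\xi,\eta)$ is an almost contact structure. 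Next I would take $\bar g_\pm$ as in the statement and check symmetry, non-degeneracy and the compatibility $\bar g_\pm(\bar\varphi_\pm X,\bar\varphi_\pm Y)=\bar g_\pm(X,Y)-\eta(X)\eta(Y)$, which follow formally from the definition and $d\eta(\varphi\cdot,\varphi\cdot)=d\eta(\cdot,\cdot)$. The only place the hypothesis $|I_M|>1$ enters is positive definiteness: decomposing $X\in\Gamma(\mathcal D)$ along $\mathcal D(\lambda)\oplus\mathcal D(-\lambda)$ and using $hX_{\pm\lambda}=\pm\lambda X_{\pm\lambda}$ expresses $\bar g_-(X,X)$ as a sum with coefficients $-\beta\bigl(1-\frac\mu2\pm\sqrt{1-\kappa}\bigr)$, both strictly positive precisely when $I_M<-1$ (symmetrically for $\bar g_+$ when $I_M>1$), where $\beta:=1/\sqrt{(1-\frac\mu2)^2-(1-\kappa)}$.

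The second step is the $K$-contact condition $\bar h_\pm:=\frac12\mathcal L_\xi\bar\varphi_\pm=0$. Writing $\mathcal L_\xi(\varphi h)=(\mathcal L_\xi\varphi)\circ h+\varphi\circ(\mathcal L_\xi h)$ and inserting $\mathcal L_\xi\varphi=2h$ and the value $\mathcal L_\xi h=2\sqrt{1-\kappa}\,\tilde h$ read off from \eqref{formule}, one finds $2\bar h_\pm=-\beta\bigl((2-\mu)h+2h^2+(2-\mu)\varphi^2h+2(1-\kappa)\varphi^2\bigr)$, which vanishes once $h^2=(\kappa-1)\varphi^2$ and $\varphi^2h=-h$ are used.

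The main obstacle is normality, $N_{\bar\varphi_\pm}\equiv0$; I would argue for $\bar\varphi_-$, the case $\bar\varphi_+$ being identical. Since $\nabla^{bl}\varphi=\nabla^{bl}h=0$ one has $\nabla^{bl}\bar\varphi_-=0$, and as $\bar\varphi_-$ interchanges $\mathcal D(\lambda)$ and $\mathcal D(-\lambda)$ these Legendre foliations are $\bar\varphi_-$-conjugate and hence $\bar g_-$-orthogonal, so Theorem \ref{lemmarocky} yields $\nabla^{bl}_XX'=-(\bar\varphi_-[X,\bar\varphi_-X'])_{\mathcal D(\lambda)}$ for $X,X'\in\Gamma(\mathcal D(\lambda))$. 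Substituting this into $N_{\bar\varphi_-}(X,X')$ shows that its $\mathcal D(\lambda)$-component equals the bi-Legendrian torsion $T^{bl}(X,X')=2d\eta(X,X')\xi=0$; the $\mathcal D(-\lambda)$-component is then removed by the relation $N_{\bar\varphi_-}(\bar\varphi_-X,\bar\varphi_-X')=-N_{\bar\varphi_-}(X,X')$, and the $\xi$-component by \eqref{formulenijenhuis2}. Running the mirror argument on $\mathcal D(-\lambda)$, and disposing of the mixed case $X\in\Gamma(\mathcal D(\lambda))$, $Y\in\Gamma(\mathcal D(-\lambda))$ and the case $(Z,\xi)$ via \eqref{formulenijenhuis1}, exhausts all the arguments. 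I expect the delicate part to be the careful bookkeeping of components along the three summands $\mathcal D(\lambda)$, $\mathcal D(-\lambda)$, $\mathbb R\xi$.

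Finally, for the anti-hypercomplex assertion I would use \eqref{paracanonical} and \eqref{definzioneparaphi} to verify by direct computation that on $\mathcal D$ the operators $\tilde\varphi$ and $\tilde\varphi_1$ are anti-commuting almost product structures and that $\tilde\varphi\tilde\varphi_1=\bar\varphi_-$, $\tilde\varphi_1\tilde\varphi=\bar\varphi_+$; together with $\bar\varphi_\pm^2|_{\mathcal D}=-I$ this exhibits $(\bar\varphi_-,\tilde\varphi,\tilde\varphi_1)$ (resp.\ $(\bar\varphi_+,\tilde\varphi_1,\tilde\varphi)$) as an almost anti-hypercomplex triple. The consistency of this triple with a genuine $3$-web on $\mathcal D$ follows from the explicit eigenvector bases \eqref{autospazio1}--\eqref{autospazio2}, which show that $\mathcal D(\lambda),\mathcal D(-\lambda),\mathcal D(\tilde\lambda),\mathcal D(-\tilde\lambda)$ are pairwise transversal.
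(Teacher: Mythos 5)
Your proposal is correct and follows essentially the same route as the paper's own proof: the same algebraic verification of $\bar\varphi_{\pm}^2=-I+\eta\otimes\xi$, the same positive-definiteness argument via the decomposition of $\mathcal{D}$ into $\mathcal{D}(\lambda)\oplus\mathcal{D}(-\lambda)$ under the hypothesis on $I_M$, the same $K$-contact computation using \eqref{formule}, the identical normality argument through $\nabla^{bl}\bar\varphi_{-}=0$, Theorem \ref{lemmarocky} and the torsion of the bi-Legendrian connection, and the same verification of the anti-hypercomplex triple. Nothing essential is missing.
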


\begin{remark}
Theorem \ref{principale4} should be compared with Corollary 3.7 in
\cite{Mino-sub}, where a similar result has been found, but using
completely different methods and where, however, the explicit
expression of the Sasakian structure was not given.
\end{remark}

\begin{remark}
In view of Corollary \ref{tanakaparallel} and Theorem
\ref{principale4} it appears that a possible geometric
interpretation of the Boeckx invariant $I_M$ is related to the
existence on the manifold of compatible  Tanaka-Webster parallel
structures or Sasakian structures, according to have $|I_M|<1$ or
$|I_M|>1$, respectively. Whereas not much one can say about those
contact metric $(\kappa,\mu)$-spaces such that $I_M=\pm 1$, which
seem to have a completely different geometric behavior and so
deserve to be studied in some subsequent paper.
\end{remark}

\small

\end{document}